\documentclass[12pt]{amsart}
\usepackage{}

\usepackage{amsmath}
\usepackage{amsfonts}
\usepackage{amssymb}
\usepackage[all]{xy}           

\usepackage{bbding}
\usepackage{txfonts}
\usepackage{amscd}

\usepackage[shortlabels]{enumitem}
\usepackage{ifpdf}
\ifpdf
  \usepackage[colorlinks,final,backref=page,hyperindex]{hyperref}
\else
  \usepackage[colorlinks,final,backref=page,hyperindex,hypertex]{hyperref}
\fi
\usepackage{tikz}
\usepackage[active]{srcltx}

\topmargin -.8cm \textheight 22.8cm \oddsidemargin 0cm \evensidemargin -0cm \textwidth 16.3cm

\makeatletter

\newtheorem{thm}{Theorem}[section]
\newtheorem{lem}[thm]{Lemma}
\newtheorem{cor}[thm]{Corollary}
\newtheorem{pro}[thm]{Proposition}
\newtheorem{ex}[thm]{Example}
\newtheorem{rmk}[thm]{Remark}
\newtheorem{defi}[thm]{Definition}

\setlength{\baselineskip}{1.8\baselineskip}

\newcommand {\emptycomment}[1]{}

\newcommand{\lon }{\,\rightarrow\,}
\newcommand{\be }{\begin{equation}}
\newcommand{\ee }{\end{equation}}

\newcommand{\g}{\mathfrak g}

\newcommand{\huaB}{\mathcal{B}}



\newcommand{\huaH}{\mathcal{H}}

\newcommand{\huaO}{{\mathcal{O}}}

\newcommand{\huaZ}{\mathcal{Z}}

\newcommand{\frkT}{\mathfrak T}

\newcommand{\Courant}[1]{\left\llbracket  #1\right\rrbracket }


\newcommand{\Id}{{\rm{Id}}}

\newcommand{\br}[1]{   [ \cdot,    \cdot  ]   }

\newcommand{\Hom}{\mathrm{Hom}}

\newcommand{\gl}{\mathfrak {gl}}

\newcommand{\ad}{\mathrm{ad}}

\newcommand{\U}{\mathrm{U}}


\newcommand{\Li}{\mathsf{3Lie}}

\begin{document}

\title{Twisted Rota-Baxter operators on $3$-Lie algebras and NS-$3$-Lie algebras}

\author{Shuai Hou}
\address{Department of Mathematics, Jilin University, Changchun 130012, Jilin, China}
\email{hshuaisun@163.com}

\author{Yunhe Sheng}
\address{Department of Mathematics, Jilin University, Changchun 130012, Jilin, China}
\email{shengyh@jlu.edu.cn}


\begin{abstract}
In this paper, first we introduce the notion of a twisted Rota-Baxter operator  on a $3$-Lie algebra $\g$ with   a representation on $V$. We show that a twisted Rota-Baxter operator induces a 3-Lie algebra structure on $V$, which represents on $\g$. By this fact, we define the cohomology of a twisted Rota-Baxter operator  and study infinitesimal deformations of a twisted Rota-Baxter operator using the second cohomology group. Then we introduce  the notion of an NS-$3$-Lie algebra, which produces a 3-Lie algebra with a representation on itself. We show that   a twisted Rota-Baxter operator induces an NS-$3$-Lie algebra naturally. Thus NS-$3$-Lie algebras can be viewed as the underlying algebraic structures of twisted Rota-Baxter operators  on   $3$-Lie algebras. Finally we show that a Nijenhuis operator on a 3-Lie algebra gives rise to a representation of the deformed 3-Lie algebra and a 2-cocycle. Consequently, the identity map will be a twisted Rota-Baxter operator on the deformed 3-Lie algebra. We also introduce the notion of a  Reynolds operator  on a $3$-Lie algebra, which  can serve as  a special case  of twisted Rota-Baxter operators on 3-Lie algebras.
\end{abstract}


\keywords{Twisted-Rota Baxter operator, NS-$3$-Lie algebra, Nijenhuis operator, Reynolds operator\\ \emph{\qquad 2020 Mathematics Subject Classification.}17A42, 17B38,  17B40, 17B56\\
Corresponding author: Yunhe Sheng}

\maketitle

\tableofcontents

\allowdisplaybreaks


\section{Introduction}
\subsection{Cohomology of  Rota-Baxter operators on Lie algebras and 3-Lie algebras}
The notion of a Rota-Baxter operator on an associative algebra was introduced in the probability study of G. Baxter and later found important applications in the Connes-Kreimer's algebraic approach to renormalization of quantum field theory \cite{CK}. For further details, see ~\cite{Gub-AMS,Gub}. A Rota-Baxter operator on a Lie algebra is naturally the operator form of a classical $r$-matrix~\cite{STS} under certain conditions.
To better understand such connection in general, Kupershmidt introduced the notion of an $\huaO$-operator (also called
a relative Rota-Baxter operator \cite{PBG} or a generalized Rota-Baxter operator \cite{Uch})
on a Lie algebra in~\cite{Ku}. Recently the   deformation theory and the cohomology theory of relative Rota-Baxter operators on both Lie and associative algebras are studied  in \cite{Das,TBGS}.

 3-Lie
algebras and more generally, $n$-Lie
algebras (also called Filippov algebras)~\cite{Filippov}, have attracted attention
from both mathematics and physics. 
See the review article \cite{review,Makhlouf} for more details.
In \cite{BaiRGuo}, The authors introduced the notion of a Rota-Baxter operator  on a 3-Lie algebra and studied its relation with Rota-Baxter Lie algebras.  To construct solutions of the classical 3-Lie Yang-Baxter equation, the authors introduced a more general notion,
an $\huaO$-operator (also called a relative Rota-Baxter operator) on a 3-Lie algebra with respect to a representation  in \cite{BGS-3-Bialgebras}.  In \cite{THS}, the authors constructed a Lie 3-algebra whose Maurer-Cartan elements are relative Rota-Baxter operators on 3-Lie algebras, and studied cohomologies and deformations of relative Rota-Baxter operators on 3-Lie algebras. See \cite{deformation} for more details about deformations of 3-Lie algebras.

\subsection{Nijenhuis operators,  Reynolds operators and NS-algebras}
 Nijenhuis operators  play an important role in  deformation
theories  due to their relationship with trivial infinitesimal
deformations. There are interesting applications of Nijenhuis operators on associative algebras and Lie algebras such as constructing biHamiltonian systems to study the integrability of nonlinear evolution equations \cite{CGM,Do}. The notion of a Nijenhuis operator on an $n$-Lie algebra was introduced in \cite{Liu-Jie-Feng} to study deformations of $n$-Lie algebras. Then it was applied to study product structures and complex structures on a 3-Lie algebra in \cite{Sheng-Tang}. See \cite{Zhangtao} for a different notion of Nijenhuis operators on 3-Lie algebras.

 Reynolds operators were introduced by Reynolds in \cite{Re} in the study of fluctuation theory in fluid dynamics.
In \cite{KAM}, the author coined the concept of the Reynolds operator and regarded the operator as a mathematical subject in general. 
In \cite{gao-guo}, the authors provided examples and properties of the Reynolds operators and studied the free Reynolds algebras.

 NS-algebras were introduced by Leroux in \cite{Leroux}. In \cite{LG}, the authors studied   the relationship between the category of Nijenhuis algebras and the category of NS-algebras. Uchino defined the notion of twisted Rota-Baxter operators on associative algebras and observed that a twisted Rota-Baxter operator induces an NS-algebra in \cite{Uch}.  In \cite{Das-2}, Das studied the cohomology of twisted Rota-Baxter operators on associative algebras and the cohomology of NS-algebras and gave various application.   Das also introduced the notions of twisted Rota-Baxter operators, Reynolds operators on Lie algebras and NS-Lie algebras, and showed that NS-Lie algebras are the underlying algebraic structures of twisted Rota-Baxter operators on Lie algebras  in \cite{Das-1}.

\subsection{Main results and outline of the paper}
 In this paper, we introduce the notion of a twisted Rota-Baxter operator (by a 2-cocycle)  on a 3-Lie algebra, and study its cohomology theory and the underlying algebraic structure. Note that a twisted Rota-Baxter operator $T:V\lon\g$ on a 3-Lie algebra $\g$ with respect to a representation on $V$ naturally induces a 3-Lie algebra structure on $V$ with a representation on $\g$. The corresponding cohomology is defined to be the cohomology of the relative Rota-Baxter operator. As applications, we use the second cohomology group to study infinitesimal deformations of twisted Rota-Baxter operators on 3-Lie algebras. We introduce the notion of an NS-3-Lie algebra and show that a twisted Rota-Baxter operator on a 3-Lie algebra naturally induces an NS-3-Lie algebra. Thus, NS-3-Lie algebras can be viewed as the underlying algebraic structures of twisted Rota-Baxter operators on   3-Lie algebras. We study two special classes of twisted Rota-Baxter operators on  3-Lie algebras:
 \begin{itemize}
   \item We show   that the identity map is a twisted Rota-Baxter operator  on the deformed  3-Lie algebra by a Nijenhuis operator;
   \item We introduce the notion of a    Reynolds operator  on a $3$-Lie algebra, which  turns out to be  a special   twisted Rota-Baxter operator.
 \end{itemize}
 We also give   concrete examples of Reynolds operators on some infinite dimensional 3-Lie algebras including the $\omega_{\infty}$~$3$-Lie algebra.

 The paper is organized as follows. In Section \ref{sec:L}, we introduce the notion of a twisted Rota-Baxter operator on a 3-Lie algebra and establish the corresponding cohomology theory. Applications are given to study infinitesimal deformations.  In Section \ref{sec:GM}, we introduce the notion of an NS-3-Lie algebra and show that a twisted Rota-Baxter operator on a 3-Lie algebra naturally induces an NS-3-Lie algebra. In Section \ref{sec:N}, first we show that a Nijenhuis operator on a 3-Lie algebra induces a representation of the deformed 3-Lie algebra and a 2-cocycle. Consequently, the identity map is a twisted Rota-Baxter operator on the deformed 3-Lie algebra. Then we introduce the notion of a   Reynolds operator  on a $3$-Lie algebra, which  turns out to be  a special   twisted Rota-Baxter operator. Finally we give   examples of Reynolds operators  on some  infinite dimensional 3-Lie algebras.



\vspace{2mm}
\noindent
{\bf Acknowledgements. } This research is supported by NSFC
(11922110).


\section{Cohomologies  of twisted Rota-Baxter operators on $3$-Lie algebras}\label{sec:L}

\subsection{Twisted Rota-Baxter operators on $3$-Lie algebras}
In this subsection, we introduce the notion of a $\Phi$-twisted Rota-Baxter operator on a 3-Lie algebra and show that a linear map $T:V\rightarrow\g$ is a $\Phi$-twisted Rota-Baxter operator if and only if the graph of $T$ is a subalgebra of the $\Phi$-twisted semidirect product 3-Lie algebra $\g\ltimes_{\Phi} V$, where $\Phi$ is a 2-cocycle of the 3-Lie algebra $\g$ with coefficients in $V$. Consequently, a $\Phi$-twisted Rota-Baxter operator $T$ induces a $3$-Lie algebra structure on $V.$ We also use a $T$-admissible $1$-cocycle $f$ to construct a new $\Phi$-twisted Rota-Baxter operator $T_f$, and the $3$-Lie algebra structures on $V$ induced by $T$ and $T_{f}$ are isomorphic.

\begin{defi}{\rm (\cite{Filippov})}
A {\bf 3-Lie algebra} is a vector space $\g$ together with a skew-symmetric linear map $[\cdot,\cdot,\cdot]_{\g}:\otimes^{3}\g\rightarrow \g$, such that for $ x_{i}\in \g, 1\leq i\leq 5$, the following {\bf Fundamental Identity} holds:
\begin{equation}\label{eq:jacobi1}
~[x_1,x_2,[x_3,x_4, x_5]_{\g}]_{\g}=[[x_1,x_2, x_3]_{\g},x_4,x_5]_{\g}+[x_3,[x_1,x_2, x_4]_{\g},x_5]_{\g}+[x_3,x_4,[x_1,x_2, x_5]_{\g}]_{\g}.
\end{equation}
\end{defi}

A {\bf derivation} on a $3$-Lie algebra $(\g,[\cdot,\cdot,\cdot]_{\g})$ is a linear map $D:\g\rightarrow \g$ satisfying
\begin{eqnarray}
 D[x_1,x_2,x_2]_{\g}=[Dx_1,x_2,x_3]_{\g}+[x_1,Dx_2,x_3]_{\g}+[x_1,x_2,Dx_3]_{\g}, \quad \forall x_1,x_2,x_3\in \g.
\label{eq:der}
\end{eqnarray}

For $x_1,x_2\in \g,$ define $\ad:\wedge^2\g\rightarrow \gl(\g)$ by
$$\ad_{x_1,x_2}x_3=[x_1,x_2,x_3]_{\g},\quad \forall x_3\in \g.$$
Then \eqref{eq:jacobi1} is equivalent to that $\ad_{x_1,x_2}$ is a derivation, i.e.
$$\ad_{x_1,x_2}[x_3,x_4,x_5]_{\g}=[\ad_{x_1,x_2}x_3,x_4,x_5]_{\g}+[x_3,\ad_{x_1,x_2}x_4,x_5]_{\g}+[x_3,x_4,\ad_{x_1,x_2}x_5]_{\g}.$$

\begin{defi}{\rm (\cite{KA})}\label{defi-representation}
 A {\bf representation} of a $3$-Lie algebra $(\g,[\cdot,\cdot,\cdot]_{\g})$ is a pair $(V; \rho)$, where $V$ is a vector space, and $\rho:\wedge^{2}\g \rightarrow \gl(V)$ is a linear map satisfying, for all $x_{1}, x_{2}, x_{3}, x_{4}\in \g,$
\begin{eqnarray}
 \label{representation-1}[\rho(x_{1},x_{2}),\rho(x_{3},x_{4})] &=&\rho([x_{1},x_{2},x_{3}]_{\g},x_{4})+\rho(x_{3},[x_{1},x_{2},x_{4}]_{\g}),\\
  \label{representation-2}\rho([x_{1},x_{2},x_{3}]_{\g},x_{4})&=&\rho (x_{1},x_{2})\rho (x_{3},x_{4})+\rho (x_{2},x_{3})\rho (x_{1},x_{4})+\rho (x_{3},x_{1})\rho (x_{2},x_{4}).
  \end{eqnarray}
\end{defi}

See \cite{Dzhu} for representations of vector product $n$-Lie algebras.
\begin{ex}
Let $(\g,[\cdot,\cdot,\cdot]_{\g})$ be a $3$-Lie algebra.  The linear map $\ad:\wedge^2\g\rightarrow\gl(\g)$ defines a representation
of the $3$-Lie algebra $\g$ on itself, which is called the {\bf adjoint representation} of $\g.$
\end{ex}

 Let $(V;\rho)$ be a representation of a $3$-Lie algebra  $(\g,[\cdot,\cdot,\cdot]_{\g})$. Denote by
$$C_{\Li}^{n}(\g;V):=\Hom (\underbrace{\wedge^{2} \g\otimes \cdots\otimes \wedge^{2}\g}_{(n-1)}\wedge \g,V),\quad(n\geq 1),$$
which is the space of $n$-cochains.
The coboundary operator ${\rm d}:C_{\Li}^{n}(\g;V)\rightarrow C_{\Li}^{n+1}(\g;V)$ is defined by
\begin{eqnarray*}&&
({\rm d}f)(\mathfrak{X}_1,\cdots,\mathfrak{X}_n,x_{n+1})\\
&=&\sum_{1\leq j<k\leq n}(-1)^{j} f(\mathfrak{X}_1,\cdots,\hat{\mathfrak{X}_{j}},\cdots,\mathfrak{X}_{k-1},
[x_j,y_j,x_k]_{\g}\wedge y_k+x_k\wedge[x_j,y_j,y_k]_{\g},
\mathfrak{X}_{k+1},\cdots,\mathfrak{X}_{n},x_{n+1})\\&&
+\sum_{j=1}^{n}(-1)^{j}f(\mathfrak{X}_1,\cdots,\hat{\mathfrak{X}_{j}},\cdots,\mathfrak{X}_{n},
[x_j,y_j,x_{n+1}]_{\g})\\&&
+\sum_{j=1}^{n}(-1)^{j+1}\rho(x_j,y_j)f(\mathfrak{X}_1,\cdots,\hat{\mathfrak{X}_{j}},
\cdots,\mathfrak{X}_{n},x_{n+1})\\&&
+(-1)^{n+1}\Big(\rho(y_n,x_{n+1})f(\mathfrak{X}_1,\cdots,\mathfrak{X}_{n-1},x_n)+\rho(x_{n+1},x_n)f(\mathfrak{X}_1,\cdots,\mathfrak{X}_{n-1},y_n)\Big),
\end{eqnarray*}
for all$~\mathfrak{X}_{i}=x_{i}\wedge y_{i}\in \wedge^{2}\g,~i=1,2,\cdots,n~and~x_{n+1}\in \g.$ It was proved in \cite{Casas,Takhtajan1} that ${\rm d}\circ{\rm d}=0.$ Thus, $(\oplus_{n=1}^{+\infty}C_{\Li}^{n}(\g;V),{\rm d})$ is a cochain complex.

\begin{defi}
The {\bf cohomology} of the $3$-Lie algebra $\g$ with coefficients in $V$ is the cohomology of the cochain complex $(\oplus_{n=1}^{+\infty} C_{\Li}^{n}(\g;V),{\rm d})$. Denote by $\huaZ_{\Li}^{n}(\g;V)$ and $\huaB_{\Li}^{n}(\g;V)$
the set of $n$-cocycles and the set of $n$-coboundaries, respectively. The $n$-th cohomology group is defined by
\begin{eqnarray*}
\huaH_{\Li}^{n}(\g;V)=\huaZ_{\Li}^{n}(\g;V)/\huaB_{\Li}^{n}(\g;V).
\end{eqnarray*}
\end{defi}

A $2$-cochain $\Phi\in C_{\Li}^{2}(\g;V)=\Hom(\wedge^3\g,V)$ is a {\bf 2-cocycle} on $\g$ with coefficients in $(V;\rho)$ if $\Phi$ satisfies
\begin{eqnarray}\label{2-cocycle}
&&\Phi(x_1,x_2,[x_3,x_4,x_5]_{\g})-\Phi([x_1,x_2,x_3]_{\g},x_4,x_5)-\Phi(x_3,[x_1,x_2,x_4]_{\g},x_5)-\Phi(x_3,x_4,[x_1,x_2,x_5]_{\g})\\
\nonumber&&+\rho(x_1,x_2)\Phi(x_3,x_4,x_5)-\rho(x_3,x_4)\Phi(x_1,x_2,x_5)-\rho(x_4,x_5)\Phi(x_1,x_2,x_3)-\rho(x_5,x_3)\Phi(x_1,x_2,x_4)\\
\nonumber&=&0,
\end{eqnarray}
for all $x_i\in \g, 1\leq i\leq 5$.

Let $\g$ be a $3$-Lie algebra and $(V;\rho)$ be a representation of $\g.$ For any $2$-cocycle $\Phi\in C_{\Li}^{2}(\g;V),$
there is a $3$-Lie algebra structure on the direct sum $\g\oplus V$ of
vector spaces, defined by
\begin{eqnarray}\label{twisted-semi-direct-product}
[(x,u),(y,v),(z,w)]_{\Phi}=([x,y,z]_{\g},\rho(x,y)w+\rho(y,z)u+\rho(z,x)v+\Phi(x,y,z)),
\end{eqnarray}
for $x,y,z\in \g, u,v,w\in V.$ This $3$-Lie algebra is called the {\bf $\Phi$-twisted semi-direct product $3$-Lie algebra} and denoted by $\g\ltimes_{\Phi} V.$

In the sequel, $\Phi$ will always denote a $2$-cocycle.

\begin{defi}
Let $\g$ be a $3$-Lie algebra and $(V;\rho)$ be a representation of $\g.$  A linear operator $T:V\rightarrow\g$ is called a {\bf $\Phi$-twisted Rota-Baxter operator} on $\g$ with respect to $(V;\rho)$ if $T$ satisfies
\begin{equation}
\label{twisted Rota-Baxter operator}[Tu,Tv,Tw]_{\g}=T\Big(\rho(Tu,Tv)w+\rho(Tv,Tw)u+\rho(Tw,Tu)v+\Phi(Tu,Tv,Tw)\Big), \quad \forall~u,v,w\in V.
\end{equation}
\end{defi}

\begin{rmk}
In \cite{BaiRGuo}, the authors introduced the notion of a Rota-Baxter operator of weight $\lambda$ on a $3$-Lie algebra and studied its relation with Rota-Baxter Lie algebras.  In \cite{BGS-3-Bialgebras}, the authors introduced the notion of an $\huaO$-operator on a $3$-Lie algebra with respect to a representation to study solutions of $3$-Lie Yang-Baxter equations. It is straightforward to see that $\Phi$-twisted Rota-Baxter operators on $3$-Lie algebras are generalizations of Rota-Baxter operators of weight $0$ and $\huaO$-operators on  $3$-Lie algebras.

\end{rmk}
We give a construction of $\Phi$-twisted Rota-Baxter operators on $3$-Lie algebras.
\begin{pro}
Let $(\g,[\cdot,\cdot,\cdot]_{\g})$ be a $3$-Lie algebra and $(V;\rho)$ be a representation. Suppose that $f:\g\rightarrow V$ is an invertible linear map. Then $T=f^{-1}:V\rightarrow\g$ is a $\Phi$-twisted Rota-Baxter operator with $\Phi=-{\rm d}f.$ \
\end{pro}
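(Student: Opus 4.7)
The plan is to directly unpack both sides of the twisted Rota–Baxter identity after writing out $\mathrm{d}f$ explicitly and using the bijection $T=f^{-1}$ to eliminate $f$ from the equation.

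First I would specialize the coboundary formula to $n=1$, where the double sum over $1\le j<k\le n$ is empty. For $\mathfrak{X}_1=x\wedge y$ and $z\in\g$ this yields
\begin{equation*}
(\mathrm{d}f)(x,y,z) = -f([x,y,z]_\g)+\rho(x,y)f(z)+\rho(y,z)f(x)+\rho(z,x)f(y),
\end{equation*}
so by definition
\begin{equation*}
\Phi(x,y,z) = f([x,y,z]_\g)-\rho(x,y)f(z)-\rho(y,z)f(x)-\rho(z,x)f(y).
\end{equation*}
Since $\Phi=-\mathrm{d}f$ is a $2$-coboundary and $\mathrm{d}\circ\mathrm{d}=0$, this $\Phi$ automatically lies in $\mathcal{Z}^2_{\mathsf{3Lie}}(\g;V)$, so the prerequisite that $\Phi$ be a $2$-cocycle is satisfied at no extra cost.

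Next I would verify the defining identity of a $\Phi$-twisted Rota–Baxter operator. Given $u,v,w\in V$, set $x=Tu$, $y=Tv$, $z=Tw$, so that $u=f(x)$, $v=f(y)$, $w=f(z)$. Substituting the explicit formula for $\Phi(x,y,z)$ above, the right-hand side of \eqref{twisted Rota-Baxter operator} becomes
\begin{equation*}
T\Big(\rho(x,y)f(z)+\rho(y,z)f(x)+\rho(z,x)f(y)+f([x,y,z]_\g)-\rho(x,y)f(z)-\rho(y,z)f(x)-\rho(z,x)f(y)\Big).
\end{equation*}
The three $\rho$-terms cancel in pairs, leaving $T\bigl(f([x,y,z]_\g)\bigr)=[x,y,z]_\g=[Tu,Tv,Tw]_\g$, which matches the left-hand side.

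There is no real obstacle here; the only item requiring care is bookkeeping the signs and the cyclic placement of arguments in the $n=1$ case of the coboundary formula, so that the three $\rho$-terms really do cancel against those produced by $\rho(Tu,Tv)w+\rho(Tv,Tw)u+\rho(Tw,Tu)v$. Once the formula for $\mathrm{d}f$ is recorded correctly, the verification is a one-line cancellation combined with the identity $T\circ f=\Id_\g$.
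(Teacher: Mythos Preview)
Your proof is correct and follows essentially the same route as the paper: both compute $(\mathrm{d}f)(Tu,Tv,Tw)$ explicitly, observe that the three $\rho$-terms cancel against those in the twisted Rota--Baxter identity, and then use $T\circ f=\Id_\g$ to recover $[Tu,Tv,Tw]_\g$. Your write-up is in fact a bit more careful, since you explicitly note that $\Phi=-\mathrm{d}f$ is a $2$-coboundary and hence a $2$-cocycle, a prerequisite the paper leaves implicit.
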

\begin{proof}
By a direct computation, we have
\begin{eqnarray}
\label{cochain-complex}\Phi(Tu,Tv,Tw)&=&-({\rm d}f)(Tu,Tv,Tw)\\
\nonumber              &=&-\rho(Tu,Tv)w-\rho(Tv,Tw)u-\rho(Tw,Tu)v+f([Tu,Tv,Tw]_{\g}).
\end{eqnarray}
By applying $T$ to both sides of \eqref{cochain-complex}, we obtain the identity \eqref{twisted Rota-Baxter operator}.
\end{proof}

The identity \eqref{twisted Rota-Baxter operator} can be characterized by the graph of $T$ being a subalgebra.
\begin{thm}\label{semi-direct}
A linear map $T:V\rightarrow\g$ is a $\Phi$-twisted Rota-Baxter operator on a $3$-Lie algebra $\g$ with respect to a representation $(V;\rho)$ if and only if the graph $Gr(T)=\{(Tu,u)|u\in V\}$ is a subalgebra of the $\Phi$-twisted semi-direct product $3$-Lie algebra $\g\ltimes_{\Phi} V.$
\end{thm}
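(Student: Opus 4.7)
The plan is to prove both directions simultaneously by directly unpacking what it means for the graph $Gr(T)$ to be closed under the bracket of $\g\ltimes_{\Phi}V$. Since $Gr(T)$ is already a linear subspace of $\g\oplus V$ and the bracket $[\cdot,\cdot,\cdot]_{\Phi}$ is skew-symmetric and trilinear, subalgebra-closure reduces to a single pointwise condition on triples of elements of $Gr(T)$.

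First I would take arbitrary $u,v,w\in V$ and compute the bracket of the corresponding elements of $Gr(T)$ in the twisted semi-direct product. By the definition \eqref{twisted-semi-direct-product} of $[\cdot,\cdot,\cdot]_{\Phi}$, we have
\begin{eqnarray*}
[(Tu,u),(Tv,v),(Tw,w)]_{\Phi}
&=&\big([Tu,Tv,Tw]_{\g},\,\rho(Tu,Tv)w+\rho(Tv,Tw)u\\
&&\quad+\rho(Tw,Tu)v+\Phi(Tu,Tv,Tw)\big).
\end{eqnarray*}
Next I would observe that an element $(x,v)\in\g\oplus V$ lies in $Gr(T)$ if and only if $x=Tv$. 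Applying this criterion to the element just computed, closure of $Gr(T)$ under $[\cdot,\cdot,\cdot]_{\Phi}$ is equivalent to the identity
\[
[Tu,Tv,Tw]_{\g}=T\big(\rho(Tu,Tv)w+\rho(Tv,Tw)u+\rho(Tw,Tu)v+\Phi(Tu,Tv,Tw)\big)
\]
holding for all $u,v,w\in V$, which is precisely the defining equation \eqref{twisted Rota-Baxter operator} of a $\Phi$-twisted Rota-Baxter operator.

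There is no substantive obstacle here; the statement is essentially a tautological translation between the operator equation and a graph-closedness condition, exactly analogous to the classical characterization of $\huaO$-operators via their graphs. The only minor point worth mentioning is that no Fundamental Identity verification is required in either direction, because we are only asserting that $Gr(T)$ is closed under the already-established bracket on $\g\ltimes_{\Phi}V$, not that $Gr(T)$ itself satisfies a separate $3$-Lie identity.
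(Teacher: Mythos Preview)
Your proof is correct and follows essentially the same approach as the paper: compute the bracket of three graph elements using \eqref{twisted-semi-direct-product}, then observe that membership of the result in $Gr(T)$ is exactly the identity \eqref{twisted Rota-Baxter operator}. Your additional remark that no Fundamental Identity needs to be checked on $Gr(T)$ is a nice clarification but not present in the paper's proof.
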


\begin{proof}
Let $T:V\rightarrow\g$ be a linear map.
For all $u,v,w\in V,$  we have
\begin{eqnarray*}
&&[(Tu,u),(Tv,v),(Tw,w)]_{\Phi}\\
&=&([Tu,Tv,Tw]_{\g},\rho(Tu,Tv)w+\rho(Tv,Tw)u+\rho(Tw,Tu)v+\Phi(Tu,Tv,Tw)),
\end{eqnarray*}
which implies that the graph $Gr(T)=\{(Tu,u)|u\in V\}$ is a subalgebra of the
$\Phi$-twisted semi-direct product $3$-Lie algebra $\g\ltimes_{\Phi} V$ if and only if $T$ satisfies
\begin{eqnarray*}
[Tu,Tv,Tw]_{\g}=T\Big(\rho(Tu,Tv)w+\rho(Tv,Tw)u+\rho(Tw,Tu)v+\Phi(Tu,Tv,Tw)\Big),
\end{eqnarray*}
which means that $T$ is a $\Phi$-twisted Rota-Baxter operator.
\end{proof}

Since $V$ and $Gr(T)$ are isomorphic as vector spaces, we get the following result immediately.
\begin{cor}\label{Induce-3-Lie}
Let $T:V\rightarrow \g$ be a $\Phi$-twisted Rota-Baxter operator on a $3$-Lie algebra $\g$ with respect to a representation
$(V;\rho).$ Then there is a $3$-Lie algebra structure $[\cdot,\cdot,\cdot]_T$ on $V$  given by
\begin{eqnarray}\label{3-Lie-on-v}
\qquad[u,v,w]_{T}=\rho(Tu,Tv)w+\rho(Tv,Tw)u+\rho(Tw,Tu)v+\Phi(Tu,Tv,Tw), \quad \forall u,v,w\in V.
\end{eqnarray}
Furthermore, $T$ is a homomorphism from the 3-Lie algebra $(V,[\cdot,\cdot,\cdot]_T)$ to   $(\g,[\cdot,\cdot,\cdot]_\g)$.
\end{cor}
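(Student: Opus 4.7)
The plan is to apply Theorem \ref{semi-direct} directly. Since $T$ is a $\Phi$-twisted Rota-Baxter operator, its graph $Gr(T)$ is a $3$-Lie subalgebra of the $\Phi$-twisted semi-direct product $\g \ltimes_\Phi V$. The map $\iota: V \to Gr(T)$, $u \mapsto (Tu,u)$, is a linear isomorphism of vector spaces, so I would transport the inherited bracket of $Gr(T)$ to $V$ along $\iota$ by setting $[u,v,w]_T := \iota^{-1}\bigl([(Tu,u),(Tv,v),(Tw,w)]_\Phi\bigr)$.

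Unpacking the definition \eqref{twisted-semi-direct-product} of the twisted semi-direct product bracket, the second coordinate of $[(Tu,u),(Tv,v),(Tw,w)]_\Phi$ is exactly $\rho(Tu,Tv)w+\rho(Tv,Tw)u+\rho(Tw,Tu)v+\Phi(Tu,Tv,Tw)$, and since $\iota^{-1}$ is projection onto the second factor restricted to $Gr(T)$, the definition of $[u,v,w]_T$ reduces to the announced formula \eqref{3-Lie-on-v}. The $3$-Lie algebra axioms — skew-symmetry and the Fundamental Identity — for $(V,[\cdot,\cdot,\cdot]_T)$ are then inherited automatically from those of $Gr(T) \subset \g \ltimes_\Phi V$ via the isomorphism $\iota$, with no additional calculation required.

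For the homomorphism statement, I would read off the first coordinate of the same bracket. Because $[(Tu,u),(Tv,v),(Tw,w)]_\Phi$ lies in $Gr(T)$, its first coordinate must equal $T$ applied to its second coordinate, which yields $T([u,v,w]_T) = [Tu,Tv,Tw]_\g$ — this is precisely the defining identity \eqref{twisted Rota-Baxter operator}, and at the same time it is the 3-Lie homomorphism condition for $T$. I do not anticipate any obstacle: the substantive content has already been packaged into Theorem \ref{semi-direct}, and this corollary is a direct translation along the natural identification $V \cong Gr(T)$.
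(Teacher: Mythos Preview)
Your proposal is correct and follows essentially the same approach as the paper: the paper simply remarks that since $V$ and $Gr(T)$ are isomorphic as vector spaces the result follows immediately from Theorem~\ref{semi-direct}, and you have spelled out exactly that transport-of-structure argument along $\iota:V\cong Gr(T)$.
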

At the end of this subsection, we introduce the notion of a $T$-admissible $1$-cocycle  by which we construct a new
$\Phi$-twisted Rota-Baxter operator on a 3-Lie algebra.

Let $f:\g\rightarrow V$ be a linear map. Define $\Psi_f:\g\oplus V\rightarrow \g\oplus V$ by $\Psi_f=\left(\begin{array}{cc}
 \Id&0\\
  f&\Id\\
 \end{array}\right).$
\begin{pro}\label{isomorphism}
Let $\g$ be a $3$-Lie algebra and $(V;\rho)$ be a representation. Then $\Psi_f$ is an isomorphism from the $\Phi$-twisted semi-direct product $3$-Lie algebra $\g\ltimes_{\Phi} V$ to the $(\Phi-{\rm d}f)$-twisted semi-direct product $3$-Lie algebra $\g\ltimes_{(\Phi-{\rm d}f)}V$.
\end{pro}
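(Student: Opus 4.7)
\smallskip

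The plan is to verify directly that $\Psi_f$ is a linear bijection intertwining the two brackets, and to observe that the homomorphism condition reduces to the very definition of $\mathrm{d}f$ as a $2$-cochain on $\g$ with values in $V$.

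First I would note that $\Psi_f$ has the inverse $\Psi_{-f} = \bigl(\begin{smallmatrix} \Id & 0 \\ -f & \Id \end{smallmatrix}\bigr)$, so it is automatically a linear isomorphism of the underlying vector space $\g \oplus V$. It therefore remains to check the bracket identity
\begin{equation*}
\Psi_f\bigl([(x,u),(y,v),(z,w)]_{\Phi}\bigr) \;=\; \bigl[\Psi_f(x,u),\Psi_f(y,v),\Psi_f(z,w)\bigr]_{\Phi - \mathrm{d}f}
\end{equation*}
for all $x,y,z\in\g$ and $u,v,w\in V$. Expanding the left-hand side via \eqref{twisted-semi-direct-product} produces
\begin{equation*}
\bigl([x,y,z]_\g,\; f([x,y,z]_\g) + \rho(x,y)w + \rho(y,z)u + \rho(z,x)v + \Phi(x,y,z)\bigr),
\end{equation*}
while expanding the right-hand side, using that $\Psi_f(x,u) = (x, f(x)+u)$, yields
\begin{equation*}
\bigl([x,y,z]_\g,\; \rho(x,y)(f(z)+w) + \rho(y,z)(f(x)+u) + \rho(z,x)(f(y)+v) + \Phi(x,y,z) - (\mathrm{d}f)(x,y,z)\bigr).
\end{equation*}
After cancelling the common terms in the $V$-components, the identity reduces to
\begin{equation*}
f([x,y,z]_\g) \;=\; \rho(x,y)f(z) + \rho(y,z)f(x) + \rho(z,x)f(y) - (\mathrm{d}f)(x,y,z).
\end{equation*}

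The last step is to show this is precisely the formula for $\mathrm{d}f$ on a $1$-cochain. Specializing the general coboundary operator to $n=1$ (so the first sum is empty, and $\mathfrak{X}_1 = x\wedge y$, $x_{n+1}=z$) gives directly
\begin{equation*}
(\mathrm{d}f)(x,y,z) \;=\; -f([x,y,z]_\g) + \rho(x,y)f(z) + \rho(y,z)f(x) + \rho(z,x)f(y),
\end{equation*}
which rearranges to the required identity. Hence $\Psi_f$ is a homomorphism of $3$-Lie algebras, and combined with invertibility it is an isomorphism. There is no substantive obstacle: the main work is careful bookkeeping of signs in the $V$-component, and the only conceptual content is the match with the Casas–Loday–Pirashvili coboundary in degree one. (Note also that since $\mathrm{d}^2 = 0$, the twisting $2$-cochain $\Phi - \mathrm{d}f$ is automatically a $2$-cocycle, so the target $\g \ltimes_{\Phi-\mathrm{d}f} V$ is a well-defined $3$-Lie algebra.)
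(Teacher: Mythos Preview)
Your proof is correct and follows essentially the same direct-verification approach as the paper: both expand the two brackets and reduce the homomorphism condition to the defining formula for $(\mathrm{d}f)(x,y,z)$. Your version adds a couple of small clarifications the paper omits (the explicit inverse $\Psi_{-f}$ for bijectivity, and the observation that $\Phi - \mathrm{d}f$ remains a $2$-cocycle so the target bracket is well-defined), but the argument is otherwise identical.
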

\begin{proof}
For all $(x,u),(y,v),(z,w)\in \g\oplus V,$ we have,
\begin{eqnarray*}
\Psi_{f}[(x,u),(y,v),(z,w)]_{\Phi}&=&\Psi_{f}([x,y,z]_{\g},\rho(x,y)w+\rho(z,x)v+\rho(y,z)u+\Phi(x,y,z))\\
&=&([x,y,z]_{\g},\rho(x,y)w+\rho(z,x)v+\rho(y,z)u+\Phi(x,y,z)+f[x,y,z]_{\g})\\
&=&([x,y,z]_{\g},\rho(x,y)w+\rho(z,x)v+\rho(y,z)u+\Phi(x,y,z)\\
&&+\rho(x,y)f(z)+\rho(y,z)f(x)+\rho(z,x)f(y)-({\rm d} f)(x,y,z))\\
&=&[(x,u+f(x)),(y,v+f(y)),(z,w+f(z))]_{(\Phi-{\rm d} f)}\\
&=&[\Psi_{f}(x,u),\Psi_{f}(y,v),\Psi_{f}(z,w)]_{(\Phi-{\rm d} f)},
\end{eqnarray*}
which implies that $\Psi_f$ is an isomorphism between $3$-Lie algebras.
 \end{proof}

\begin{defi}
Let $T$ be a $\Phi$-twisted Rota-Baxter operator on a $3$-Lie algebra $\g$ with respect to $(V;\rho)$.
A $1$-cocycle $f$ of $\g$  with coefficients in $V$ is called a {\bf$T$-admissible $1$-cocycle} if the linear map $\Id+f\circ T:V\rightarrow V$ is invertible.
\end{defi}

\begin{pro}
With the above notations, if $f:\g\rightarrow V$ is a $T$-admissible $1$-cocycle, then $T(\Id+f\circ T)^{-1}:V\rightarrow \g$ is a $\Phi$-twisted Rota-Baxter operator.
We denote this $\Phi$-twisted Rota-Baxter operator by $T_{f}.$
\end{pro}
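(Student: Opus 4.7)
The plan is to reduce the statement to the graph characterization in Theorem \ref{semi-direct} combined with the isomorphism in Proposition \ref{isomorphism}. Since $f$ is a $1$-cocycle we have ${\rm d}f = 0$, so Proposition \ref{isomorphism} specializes to say that $\Psi_f:\g\ltimes_{\Phi} V \to \g\ltimes_{\Phi} V$ is an \emph{automorphism} of the $\Phi$-twisted semi-direct product $3$-Lie algebra.

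First I would identify the image of the graph of $T$ under $\Psi_f$. For $u\in V$,
\begin{eqnarray*}
\Psi_f(Tu,u) = (Tu, u+f(Tu)) = (Tu,(\Id+f\circ T)u).
\end{eqnarray*}
Setting $\tilde u = (\Id+f\circ T)u$, which is a bijective change of variable on $V$ by the $T$-admissibility of $f$, we get $u = (\Id+f\circ T)^{-1}\tilde u$, hence $Tu = T(\Id+f\circ T)^{-1}\tilde u = T_f\tilde u$. Therefore
\begin{eqnarray*}
\Psi_f(Gr(T)) = \{(T_f\tilde u,\tilde u)\mid \tilde u\in V\} = Gr(T_f).
\end{eqnarray*}

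Next I would transport the subalgebra property across $\Psi_f$. Since $T$ is a $\Phi$-twisted Rota-Baxter operator, Theorem \ref{semi-direct} gives that $Gr(T)$ is a subalgebra of $\g\ltimes_{\Phi} V$; applying the automorphism $\Psi_f$ then yields that $Gr(T_f) = \Psi_f(Gr(T))$ is again a subalgebra of $\g\ltimes_{\Phi} V$. Invoking the converse direction of Theorem \ref{semi-direct} concludes that $T_f$ is itself a $\Phi$-twisted Rota-Baxter operator.

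The only non-cosmetic step is recognizing that the $1$-cocycle condition is precisely what is required for $\Psi_f$ to preserve the bracket $[\cdot,\cdot,\cdot]_\Phi$ (not just relate it to a twisted one), so I expect that to be the main conceptual point rather than any computational obstacle; everything else is a routine manipulation of the defining formula of $T_f$ and of the invertible map $\Id+f\circ T$. If one preferred, the same conclusion could be verified by directly substituting $T_f = T(\Id+f\circ T)^{-1}$ into the defining identity \eqref{twisted Rota-Baxter operator} and using the cocycle condition on $f$ together with \eqref{twisted Rota-Baxter operator} for $T$, but the graph-theoretic route avoids the bookkeeping.
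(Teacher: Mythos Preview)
Your proposal is correct and follows essentially the same route as the paper: both use that ${\rm d}f=0$ to make $\Psi_f$ an automorphism of $\g\ltimes_{\Phi}V$ (via Proposition \ref{isomorphism}), apply it to the subalgebra $Gr(T)$, identify the image with $Gr(T_f)$ using invertibility of $\Id+f\circ T$, and then invoke Theorem \ref{semi-direct}. Your write-up is slightly more explicit in the change-of-variable step, but there is no substantive difference.
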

\begin{proof}
Let $f$ be a $T$-admissible $1$-cocycle. Then $\Phi-{\rm d} f=\Phi$. By Theorem \ref{semi-direct} and Proposition \ref{isomorphism},
$\Psi_{f}(Gr(T))=\{(Tu,u+f(Tu))|u\in V\}\subset \g\ltimes_{\Phi }V$ is a subalgebra of the $\Phi$-twisted semidirect product $3$-Lie algebra $ \g\ltimes_{\Phi }V$. Since the linear map $(\Id+f\circ T):V\rightarrow V$ is invertible,   $\Psi_{f}(Gr(T))$ is the graph of  $T(\Id+f\circ T)^{-1}:V\rightarrow\g,$
which implies that $T(\Id+f\circ T)^{-1}$ is a $\Phi$-twisted Rota-Baxter operator. This completes the proof.
\end{proof}

Recall from Corollary \ref{Induce-3-Lie} that a $\Phi$-twisted Rota-Baxter operator induces a $3$-Lie algebra on $V.$ We have the following conclusion.
\begin{pro}
Let $T$ be a $\Phi$-twisted Rota-Baxter operator and $f$ be a $T$-admissible $1$-cocycle. Then the $3$-Lie algebra structures on $V$ induced by $T$ and $T_{f}$ are isomorphic.
\end{pro}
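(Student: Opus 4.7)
The plan is to exhibit an explicit linear isomorphism $\phi : V \to V$ and verify that it intertwines the two brackets. The natural candidate, dictated by the very construction of $T_f$, is $\phi := \Id + f\circ T$, which is invertible precisely because $f$ is a $T$-admissible $1$-cocycle. From the definition $T_f = T(\Id + f\circ T)^{-1}$ we have $T_f \circ \phi = T$; this single identity will be the key bridge relating the two bracket formulas from Corollary \ref{Induce-3-Lie}.

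My preferred route is to reuse Proposition \ref{isomorphism} rather than computing the intertwining identity by hand. Because $f$ is a $1$-cocycle we have $\Phi - \dM f = \Phi$, so Proposition \ref{isomorphism} provides a 3-Lie algebra automorphism $\Psi_f$ of $\g\ltimes_\Phi V$. A short check (already carried out in the proof of the previous proposition) shows $\Psi_f(Gr(T)) = \{(Tu,\, u + f(Tu)) \mid u\in V\} = Gr(T_f)$, where the second equality uses $T_f(u + f(Tu)) = Tu$. Hence $\Psi_f$ restricts to a 3-Lie algebra isomorphism $Gr(T)\to Gr(T_f)$ between the two subalgebras furnished by Theorem \ref{semi-direct}.

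It remains to transport this along the canonical 3-Lie algebra identifications $V\cong Gr(T)$, $u\mapsto (Tu,u)$, and $V\cong Gr(T_f)$, $v\mapsto (T_f v,v)$, under which the brackets become $[\cdot,\cdot,\cdot]_T$ and $[\cdot,\cdot,\cdot]_{T_f}$ respectively (this is exactly the content of Corollary \ref{Induce-3-Lie}). Tracing an element: $u\in V$ is sent to $(Tu,u)$, then by $\Psi_f$ to $(Tu, u+f(Tu))=(T_f\phi(u),\phi(u))$, which corresponds back to $\phi(u)\in V$. Thus the induced map is $\phi=\Id+f\circ T$, and it is a 3-Lie algebra isomorphism from $(V,[\cdot,\cdot,\cdot]_T)$ to $(V,[\cdot,\cdot,\cdot]_{T_f})$. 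I expect no real obstacle: the cocycle hypothesis is used solely to ensure $\Phi-\dM f=\Phi$ so that Proposition \ref{isomorphism} outputs an automorphism of the same twisted semi-direct product; the only care needed is the bookkeeping with the two different graph-identifications of $V$. (As a sanity check, one may instead verify directly that $\phi([u,v,w]_T) = [\phi u,\phi v,\phi w]_{T_f}$; after expanding with $T_f\circ \phi = T$ and $T[u,v,w]_T=[Tu,Tv,Tw]_\g$, the required equation collapses to the $1$-cocycle condition $f([Tu,Tv,Tw]_\g)=\rho(Tu,Tv)f(Tw)+\rho(Tv,Tw)f(Tu)+\rho(Tw,Tu)f(Tv)$.)
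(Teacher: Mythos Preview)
Your argument is correct. The paper, however, takes the direct route that you mention only as a sanity check: it simply expands $[(\Id+f\circ T)(u),(\Id+f\circ T)(v),(\Id+f\circ T)(w)]_{T_f}$ using $T_f\circ(\Id+f\circ T)=T$, then applies the $1$-cocycle identity for $f$ to collapse the extra terms into $f([Tu,Tv,Tw]_\g)=f\circ T([u,v,w]_T)$, arriving at $(\Id+f\circ T)([u,v,w]_T)$. Your approach is more structural: you observe that $\Psi_f$ is an automorphism of $\g\ltimes_\Phi V$ (since $\dM f=0$) carrying $Gr(T)$ to $Gr(T_f)$, and then read off the induced map on $V$ via the two graph-identifications from Corollary~\ref{Induce-3-Lie}. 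This buys you a proof with essentially no computation and makes transparent \emph{why} the isomorphism is $\Id+f\circ T$; the paper's computation, by contrast, is self-contained and does not rely on tracing through Theorem~\ref{semi-direct} and Proposition~\ref{isomorphism}.
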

\begin{proof}
Consider the  linear map $\Id+f\circ T:V\rightarrow V.$ For all $u,v,w\in V,$ we have
\begin{eqnarray*}
&&[(\Id+f\circ T)(u),(\Id+f\circ T)(v),(\Id+f\circ T)(w)]_{T_{f}}\\
&=&\rho(Tu,Tv)(\Id+f\circ T)(w)+\rho(Tv,Tw)(\Id+f\circ T)(u)\\
&&+\rho(Tw,Tu)(\Id+f\circ T)(v)+\Phi(Tu,Tv,Tw)\\
&=&\rho(Tu,Tv)w+\rho(Tv,Tw)u+\rho(Tw,Tu)v+\Phi(Tu,Tv,Tw)\\
&&+\rho(Tu,Tv)f(Tw)+\rho(Tv,Tw)f(Tu)+\rho(Tw,Tu)f(Tv)\\
&=&[u,v,w]_{T}+f([Tu,Tv,Tw]_{\g})\\
&=&[u,v,w]_{T}+fT([u,v,w]_{T})\\
&=&(\Id+f\circ T)([u,v,w]_{T}).
\end{eqnarray*}
Thus $\Id+f\circ T$ is an isomorphism of $3$-Lie algebras from $(V,[\cdot,\cdot,\cdot]_{T})$ to $(V,[\cdot,\cdot,\cdot]_{T_{f}})$ .
\end{proof}

\subsection{Cohomology  of $\Phi$-twisted Rota-Baxter operators  on $3$-Lie algebras}

In this subsection, we construct a representation of the $3$-Lie algebra $(V,[\cdot,\cdot,\cdot]_T)$ on the vector space $\g,$ and define the cohomology of $\Phi$-twisted Rota-Baxter operators on 3-Lie algebras.
\begin{lem}\label{twisted Rota-Baxter operator-representation}
Let $T$ be a $\Phi$-twisted Rota-Baxter operator on a $3$-Lie algebra $(\g,[\cdot,\cdot,\cdot]_{\g})$ with respect to a representation $(V;\rho)$. Define $\varrho: \wedge^2V\rightarrow\gl(\g)$ by
 \begin{equation}
 \quad\varrho(u,v)(x)=[Tu,Tv,x]_{\g}-T\Big(\rho(x,Tu)v+\rho(Tv,x)u+\Phi(Tu,Tv,x)\Big),\quad \forall x\in \g,u,v\in V.
  \end{equation}
Then $(\g;\varrho)$ is a representation of the $3$-Lie algebra $(V,[\cdot,\cdot,\cdot]_T)$.
\end{lem}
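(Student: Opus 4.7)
My plan is to realize $\varrho$ as a transported quotient representation, leveraging Theorem~\ref{semi-direct}. The general fact I rely on is this: whenever $H$ is a 3-Lie subalgebra of a 3-Lie algebra $L$, the adjoint action of $\wedge^2 H$ on $L$ descends to a representation of $H$ on the quotient vector space $L/H$. Well-definedness of $[h_1 \wedge h_2]\cdot [\ell] := [[h_1, h_2, \ell]_L]$ requires only that $H$ is closed under the bracket, and the two representation axioms \eqref{representation-1}--\eqref{representation-2} are inherited from the adjoint representation of $L$ on itself by projecting through $L \twoheadrightarrow L/H$.

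I would apply this to $L = \g \ltimes_\Phi V$ and $H = Gr(T)$, which is a 3-Lie subalgebra by Theorem~\ref{semi-direct}. By Corollary~\ref{Induce-3-Lie}, the assignment $u \mapsto (Tu, u)$ identifies $(V, [\cdot, \cdot, \cdot]_T)$ with $Gr(T)$ as 3-Lie algebras, while the linear map
\begin{equation*}
\Psi : (\g \oplus V)/Gr(T) \longrightarrow \g, \qquad \Psi[(x, w)] := x - Tw,
\end{equation*}
is a well-defined vector-space isomorphism with inverse $x \mapsto [(x, 0)]$.

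Transporting the quotient action across $\Psi$, a direct computation in $\g\ltimes_\Phi V$ gives
\begin{eqnarray*}
[(Tu, u), (Tv, v), (x, 0)]_\Phi &=& \Big( [Tu, Tv, x]_\g,\; \rho(Tv, x)u + \rho(x, Tu)v + \Phi(Tu, Tv, x) \Big),
\end{eqnarray*}
and applying $\Psi$ to the class of this element produces $[Tu, Tv, x]_\g - T\big(\rho(x, Tu)v + \rho(Tv, x)u + \Phi(Tu, Tv, x)\big) = \varrho(u, v)(x)$. Thus $\varrho$ is exactly the transported quotient representation, and hence a representation of $(V, [\cdot, \cdot, \cdot]_T)$ on $\g$.

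The main subtle step is justifying that both representation axioms genuinely descend to the quotient: axiom \eqref{representation-1} is immediate from the Fundamental Identity, whereas axiom \eqref{representation-2} makes essential use of the fact that the adjoint representation of any 3-Lie algebra already satisfies \eqref{representation-2}, which then projects modulo $H$. If one prefers to avoid this conceptual detour, the fallback is a direct verification of \eqref{representation-1}--\eqref{representation-2} for $\varrho$, expanding both sides and simplifying by means of the 2-cocycle condition \eqref{2-cocycle} for $\Phi$, the twisted Rota-Baxter equation \eqref{twisted Rota-Baxter operator} for $T$, the representation axioms for $\rho$, and the Fundamental Identity in $\g$; the bookkeeping of the alternating sums in axiom \eqref{representation-2} is the principal obstacle along that route.
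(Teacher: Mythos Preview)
Your argument is correct and takes a genuinely different route from the paper. The paper verifies the two representation axioms \eqref{representation-1}--\eqref{representation-2} for $\varrho$ by brute force: it expands both sides for arbitrary $u_1,\dots,u_4\in V$ and $x\in\g$ and cancels everything using the Fundamental Identity, the representation axioms for $\rho$, the 2-cocycle condition \eqref{2-cocycle}, the twisted Rota-Baxter identity \eqref{twisted Rota-Baxter operator}, and the formula \eqref{3-Lie-on-v} for $[\cdot,\cdot,\cdot]_T$. Each verification occupies a page of computation.

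Your approach, by contrast, identifies $\varrho$ once and for all as the quotient of the adjoint representation of $\g\ltimes_\Phi V$ restricted to the subalgebra $Gr(T)$, transported along the linear isomorphism $\Psi:(\g\oplus V)/Gr(T)\to\g$. The general lemma you invoke---that a 3-Lie subalgebra $H\subset L$ represents on $L/H$---is sound: the restriction of $\ad$ to $\wedge^2H$ still satisfies both \eqref{representation-1} and \eqref{representation-2} as operator identities on $L$ because $H$ is closed under the bracket, and these identities pass to the quotient since $H$ is an $H$-submodule of $L$. Your explicit check that the transported action reproduces $\varrho(u,v)(x)$ is correct. What this buys you is that all the heavy cancellation in the paper's proof is absorbed into the single fact that $(\g\ltimes_\Phi V,[\cdot,\cdot,\cdot]_\Phi)$ is a 3-Lie algebra, which the paper already takes for granted; the price is that one must articulate and trust the quotient-representation lemma, which the paper avoids by staying entirely computational.
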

\begin{proof}
By a direct calculation using \eqref{eq:jacobi1}, \eqref{representation-1}-\eqref{2-cocycle}, \eqref{twisted Rota-Baxter operator} and \eqref{3-Lie-on-v},
for all $u_i\in V,1\leq i\leq 4, x\in \g,$ we have
\begin{eqnarray*}
&&\Big(\varrho(u_1,u_2)\varrho(u_3,u_4)-\varrho(u_3,u_4)\varrho(u_1,u_2)-\varrho([u_1,u_2,u_3]_{T},u_4)+\varrho([u_1,u_2,u_4]_{T},u_3)\Big)(x)\\
&=&\varrho(u_1,u_2)\Big([Tu_3,Tu_4,x]_{\g}-T\rho(x,Tu_3)u_4-T\rho(Tu_4,x)u_3-T\Phi(Tu_3,Tu_4,x)\Big)\\
&&-\varrho(u_3,u_4)\Big([Tu_1,Tu_2,x]_{\g}-T\rho(x,Tu_1)u_2-T\rho(Tu_2,x)u_1-T\Phi(Tu_1,Tu_2,x)\Big)\\
&&+T\Big(\rho(x,T[u_1,u_2,u_3]_{T})u_4+\rho(Tu_4,x)[u_1,u_2,u_3]_{T}+\Phi(T[u_1,u_2,u_3]_{T},Tu_4,x)\Big)\\
&&-T\Big(\rho(x,T[u_1,u_2,u_4]_{T})u_3+\rho(Tu_3,x)[u_1,u_2,u_4]_{T}+\Phi(T[u_1,u_2,u_4]_{T},Tu_3,x)\Big)\\
&&-[T[u_1,u_2,u_3]_{T},Tu_4,x]_{\g}+[T[u_1,u_2,u_4]_{T},Tu_3,x]_{\g}\\
&=&-[Tu_1,Tu_2,T\rho(x,Tu_3)u_4]_{\g}-[Tu_1,Tu_2,T\rho(Tu_4,x)u_3]_{\g}-[Tu_1,Tu_2,T\Phi(Tu_3,Tu_4,x)]_{\g}\\
&&-T\Big(\rho([Tu_3,Tu_4,x]_{\g},Tu_1)u_2+\rho(Tu_2,[Tu_3,Tu_4,x]_{\g})u_1+\Phi(Tu_1,Tu_2,[Tu_3,Tu_4,x]_{\g})\Big)\\
&&+T\Big(\rho(T\rho(x,Tu_3)u_4,Tu_1)u_2+\rho(Tu_2,T\rho(x,Tu_3)u_4)u_1+\Phi(Tu_1,Tu_2,T\rho(x,Tu_3)u_4)\Big)\\
&&+T\Big(\rho(T\rho(Tu_4,x)u_3,Tu_1)u_2+\rho(Tu_2,T\rho(Tu_4,x)u_3)u_1+\Phi(Tu_1,Tu_2,T\rho(Tu_4,x)u_3)\Big)\\
&&+T\Big(\rho(T\Phi(Tu_3,Tu_4,x),Tu_1)u_2+\rho(Tu_2,T\Phi(Tu_3,Tu_4,x))u_1+\Phi(Tu_1,Tu_2,T\Phi(Tu_3,Tu_4,x))\Big)\\
&&+[Tu_3,Tu_4,T\rho(x,Tu_1)u_2]_{\g}+[Tu_3,Tu_4,T\rho(Tu_2,x)u_1]_{\g}+[Tu_3,Tu_4,T\Phi(Tu_1,Tu_2,x)]_{\g}\\
&&+T\Big(\rho([Tu_1,Tu_2,x]_{\g},Tu_3)u_4+\rho(Tu_4,[Tu_1,Tu_2,x]_{\g})u_3+\Phi(Tu_3,Tu_4,[Tu_1,Tu_2,x]_{\g})\Big)\\
&&-T\Big(\rho(T\rho(x,Tu_1)u_2,Tu_3)u_4+\rho(Tu_4,T\rho(x,Tu_1)u_2)u_3+\Phi(Tu_3,Tu_4,T\rho(x,Tu_1)u_2)\Big)\\
&&-T\Big(\rho(T\rho(Tu_2,x)u_1,Tu_3)u_4+\rho(Tu_4,T\rho(Tu_2,x)u_1)u_3+\Phi(Tu_3,Tu_4,T\rho(Tu_2,x)u_1)\Big)\\
&&-T\Big(\rho(T\Phi(Tu_1,Tu_2,x),Tu_3)u_4+\rho(Tu_4,T\Phi(Tu_1,Tu_2,x))u_1+\Phi(Tu_3,Tu_4,T\Phi(Tu_1,Tu_2,x))\Big)\\
&&+T\Big(\rho(x,[Tu_1,Tu_2,Tu_3]_{\g})u_4+\rho(Tu_4,x)\rho(Tu_1,Tu_2)u_3+\rho(Tu_4,x)\rho(Tu_2,Tu_3)u_1\\
&&+\rho(Tu_4,x)\rho(Tu_3,Tu_1)u_2+\Phi([Tu_1,Tu_2,Tu_3]_{\g},Tu_4,x)\Big)\\
&&-T\Big(\rho(x,[Tu_1,Tu_2,Tu_4]_{\g})u_3+\rho(Tu_3,x)\rho(Tu_1,Tu_2)u_4+\rho(Tu_3,x)\rho(Tu_2,Tu_4)u_1\\
&&+\rho(Tu_3,x)\rho(Tu_4,Tu_1)u_2+\Phi([Tu_1,Tu_2,Tu_4]_{\g},Tu_3,x)\Big)\\
&=&0,
\end{eqnarray*}
and
\begin{eqnarray*}
&&\Big(\varrho([u_1,u_2,u_3]_{T},u_4)-\varrho(u_1,u_2)\varrho(u_3,u_4)-\varrho(u_2,u_3)\varrho(u_1,u_4)-\varrho(u_3,u_1)\varrho(u_2,u_4)\Big)(x)\\
&=&-T\Big(\rho(x,T[u_1,u_2,u_3]_{T})u_4+\rho(Tu_4,x)[u_1,u_2,u_3]_{T}+\Phi(T[u_1,u_2,u_3]_{T},Tu_4,x)\Big)\\
&&-\varrho(u_1,u_2)\Big([Tu_3,Tu_4,x]_{\g}-T\rho(x,Tu_3)u_4-T\rho(Tu_4,x)u_3-T\Phi(Tu_3,Tu_4,x)\Big)\\
&&-\varrho(u_2,u_3)\Big([Tu_1,Tu_4,x]_{\g}-T\rho(x,Tu_1)u_4-T\rho(Tu_4,x)u_1-T\Phi(Tu_1,Tu_4,x)\Big)\\
&&-\varrho(u_3,u_1)\Big([Tu_2,Tu_4,x]_{\g}-T\rho(x,Tu_2)u_4-T\rho(Tu_4,x)u_2-T\Phi(Tu_2,Tu_4,x)\Big)\\
&&+[T[u_1,u_2,u_3]_{T},Tu_4,x]_{\g}\\
&=&-T\Big(\rho(x,[Tu_1,Tu_2,Tu_3]_{\g})u_4+\Phi([Tu_1,Tu_2,Tu_3]_{\g},Tu_4,x)\Big)\\
&&-T\rho(Tu_4,x)\Big(\rho(Tu_1,Tu_2)u_3+\rho(Tu_2,Tu_3)u_1+\rho(Tu_3,Tu_1)u_2+\Phi(Tu_1,Tu_2,Tu_3)\Big)\\
&&+[Tu_1,Tu_2,T\rho(x,Tu_3)u_4+T\rho(Tu_4,x)u_3+T\Phi(Tu_3,Tu_4,x)]_{\g}\\
&&+T\Big(\rho([Tu_3,Tu_4,x]_{\g},Tu_1)u_2+\rho(Tu_2,[Tu_3,Tu_4,x]_{\g})u_1+\Phi(Tu_1,Tu_2,[Tu_3,Tu_4,x]_{\g})\Big)\\
&&-T\Big(\rho(T\rho(x,Tu_3)u_4,Tu_1)u_2+\rho(Tu_2,T\rho(x,Tu_3)u_4)u_1+\Phi(Tu_1,Tu_2,T\rho(x,Tu_3)u_4)\Big)\\
&&-T\Big(\rho(T\rho(Tu_4,x)u_3,Tu_1)u_2+\rho(Tu_2,T\rho(Tu_4,x)u_3)u_1+\Phi(Tu_1,Tu_2,T\rho(Tu_4,x)u_3)\Big)\\
&&-T\Big(\rho(T\Phi(Tu_3,Tu_4,x),Tu_1)u_2+\rho(Tu_2,T\Phi(Tu_3,Tu_4,x))u_1+\Phi(Tu_1,Tu_2,T\Phi(Tu_3,Tu_4,x))\Big)\\
&&+[Tu_2,Tu_3,T\rho(x,Tu_1)u_4+T\rho(Tu_4,x)u_1+T\Phi(Tu_1,Tu_4,x)]_{\g}\\
&&+T\Big(\rho([Tu_1,Tu_4,x]_{\g},Tu_2)u_3+\rho(Tu_3,[Tu_1,Tu_4,x]_{\g})u_2+\Phi(Tu_2,Tu_3,[Tu_1,Tu_4,x]_{\g})\Big)\\
&&-T\Big(\rho(T\rho(x,Tu_1)u_4,Tu_2)u_3+\rho(Tu_3,T\rho(x,Tu_1)u_4)u_2+\Phi(Tu_2,Tu_3,T\rho(x,Tu_1)u_4)\Big)\\
&&-T\Big(\rho(T\rho(Tu_4,x)u_1,Tu_2)u_3+\rho(Tu_3,T\rho(Tu_4,x)u_1)u_2+\Phi(Tu_2,Tu_3,T\rho(Tu_4,x)u_1)\Big)\\
&&-T\Big(\rho(T\Phi(Tu_1,Tu_4,x),Tu_2)u_3+\rho(Tu_3,T\Phi(Tu_1,Tu_4,x))u_2+\Phi(Tu_2,Tu_3,T\Phi(Tu_1,Tu_4,x))\Big)\\
&&+[Tu_3,Tu_1,T\rho(x,Tu_2)u_4+T\rho(Tu_4,x)u_2+T\Phi(Tu_2,Tu_4,x)]_{\g}\\
&&+T\Big(\rho([Tu_2,Tu_4,x]_{\g},Tu_3)u_1+\rho(Tu_1,[Tu_2,Tu_4,x]_{\g})u_3+\Phi(Tu_3,Tu_1,[Tu_2,Tu_4,x]_{\g})\Big)\\
&&-T\Big(\rho(T\rho(x,Tu_2)u_4,Tu_3)u_1+\rho(Tu_1,T\rho(x,Tu_2)u_4)u_3+\Phi(Tu_3,Tu_1,T\rho(x,Tu_2)u_4)\Big)\\
&&-T\Big(\rho(T\rho(Tu_4,x)u_2,Tu_3)u_1+\rho(Tu_1,T\rho(Tu_4,x)u_2)u_3+\Phi(Tu_3,Tu_1,T\rho(Tu_4,x)u_2)\Big)\\
&&-T\Big(\rho(T\Phi(Tu_2,Tu_4,x),Tu_3)u_1+\rho(Tu_1,T\Phi(Tu_2,Tu_4,x))u_3+\Phi(Tu_3,Tu_1,T\Phi(Tu_2,Tu_4,x))\Big)\\
&=&0.
\end{eqnarray*}
Therefore, we deduce that $(\g;\varrho)$ is a representation of the $3$-Lie algebra $(V,[\cdot,\cdot,\cdot]_{T})$.
\end{proof}

Let ${\rm d_{T}}:C_{\Li}^{n}(V;\g)\rightarrow C_{\Li}^{n+1}(V;\g),(n\geq1)$ be the corresponding coboundary
operator of the $3$-Lie algebra $(V,[\cdot,\cdot,\cdot]_{T})$ with coefficients in the representation $(\g;\varrho)$.
More precisely, ${\rm d_{T}}:C_{\Li}^{n}(V;\g)\rightarrow C_{\Li}^{n+1}(V;\g)$ is given by
\begin{eqnarray*}
&&({\rm d_{T}}f)(\U_1,\cdots,\U_n,u_{n+1})\\
&=&\sum_{1\leq j<k\leq n}(-1)^jf(\U_1,\cdots,\hat{\U_j},\cdots,\U_{k-1},[u_j,v_j,u_k]_{T}\wedge v_k+u_k\wedge[u_j,v_j,v_k]_{T},\U_{k+1},\cdots,\U_n,u_{n+1})\\
&&+\sum_{j=1}^{n}(-1)^{j}f(\U_1,\cdots,\hat{\U_j},\cdots,\U_{n},[u_j,v_j,u_{n+1}]_{T})\\
&&+\sum_{j=1}^{n}(-1)^{j+1}\varrho(u_j,v_j)f(\U_1,\cdots,\hat{\U_j},\cdots,\U_n,u_{n+1})\\
&&+(-1)^{n+1}\Big(\varrho(v_n,u_{n+1})f(\U_1,\cdots,\U_{n-1},u_n)+\varrho(u_{n+1},u_{n})f(\U_1,\cdots,\U_{n+1},v_n)\Big),
\end{eqnarray*}
for all $\U_i=u_i\wedge v_i\in \wedge^2V,~ i=1,2,\cdots,n$ and $u_{n+1}\in V.$

It is obvious that $f\in C_{\Li}^{1}(V;\g)$ is closed if and only if
\begin{eqnarray*}
&&[Tu_1,Tu_2,f(u_3)]_{\g}+[f(u_1),Tu_2,Tu_3]_{\g}+[Tu_1,f(u_2),Tu_3]_{\g}\\
&=&T\Big(\rho(f(u_3),Tu_1)u_2+\rho(Tu_2,f(u_3))u_1+\Phi(Tu_1,Tu_2,f(u_3))\Big)\\
&&+T\Big(\rho(f(u_1),Tu_2)u_3+\rho(Tu_3,f(u_1))u_2+\Phi(Tu_2,Tu_3,f(u_1))\Big)\\
&&+T\Big(\rho(f(u_2),Tu_3)u_1+\rho(Tu_1,f(u_2))u_3+\Phi(Tu_3,Tu_1,f(u_2))\Big)\\
&&+f\Big(\rho(Tu_1,Tu_2)u_3+\rho(Tu_2,Tu_3)u_1+\rho(Tu_3,Tu_1)u_2+\Phi(Tu_1,Tu_2,Tu_3)\Big).
\end{eqnarray*}
Define $\delta:\wedge^2\g\rightarrow\Hom(V,\g)$ by
\begin{eqnarray*}
\delta(\mathfrak{X})v=T\rho(\mathfrak{X})v-[\mathfrak{X},Tv]_{\g}+T\Phi(\mathfrak{X},Tv), \quad \forall\mathfrak{X}\in\wedge^2\g, v\in V.
\end{eqnarray*}
\begin{pro}
Let $T$ be a $\Phi$-twisted Rota-Baxter operator on a $3$-Lie algebra $(\g,[\cdot,\cdot,\cdot]_{\g})$
with respect to a representation $(V;\rho).$ Then $\delta(\mathfrak{X})$ is a $1$-cocycle of the $3$-Lie algebra $(V,[\cdot,\cdot,\cdot]_{T})$ with coefficients in $(\g;\varrho).$
\end{pro}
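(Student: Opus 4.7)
The plan is to verify the 1-cocycle identity for $\delta(\mathfrak{X})$ by passing through the twisted semi-direct product $\g\ltimes_\Phi V$. First, writing out ${\rm d}_T\delta(\mathfrak{X})=0$ via the degree-one piece of the cochain formula reduces the claim to the identity
\[
\delta(\mathfrak{X})([u_1,v_1,u_2]_T)=\varrho(u_1,v_1)\delta(\mathfrak{X})(u_2)+\varrho(v_1,u_2)\delta(\mathfrak{X})(u_1)+\varrho(u_2,u_1)\delta(\mathfrak{X})(v_1),
\]
which must hold for all $u_1,v_1,u_2\in V$ and every $\mathfrak{X}=x\wedge y\in\wedge^2\g$. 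Rather than expand each side as a large sum in $\rho,\Phi,T$, and $[\cdot,\cdot,\cdot]_\g$, I would introduce the inclusion $\iota:V\to\g\oplus V$, $\iota(u)=(Tu,u)$, and the projection $\pi:\g\oplus V\to\g$, $\pi(a,u)=a-Tu$. By Theorem~\ref{semi-direct} $\iota$ realizes $(V,[\cdot,\cdot,\cdot]_T)$ as the subalgebra $Gr(T)$ of $\g\ltimes_\Phi V$, and direct inspection of \eqref{twisted-semi-direct-product} yields the compact identification
\[
\delta(\mathfrak{X})(u)=-\pi\bigl([(x,0),(y,0),(Tu,u)]_\Phi\bigr).
\]

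The endomorphism $D_\mathfrak{X}:=[(x,0),(y,0),\,\cdot\,]_\Phi$ is an inner derivation of $\g\ltimes_\Phi V$, so applying the Leibniz rule to $D_\mathfrak{X}[\iota(u_1),\iota(v_1),\iota(u_2)]_\Phi$ and then $\pi$ produces $-\delta(\mathfrak{X})([u_1,v_1,u_2]_T)$ on the left, while the right side is a sum of three terms of the shape $\pi[D_\mathfrak{X}\iota(u_j),\,\ldots,\,\ldots]_\Phi$ in various slot orders. The crucial auxiliary identity
\[
\pi\bigl([(a,u),\iota(w_1),\iota(w_2)]_\Phi\bigr)=\varrho(w_1,w_2)(a-Tu),\qquad (a,u)\in\g\oplus V,\ w_1,w_2\in V,
\]
then converts each Leibniz contribution into $\varrho(\cdot,\cdot)$ applied to $\pi D_\mathfrak{X}\iota(u_j)=-\delta(\mathfrak{X})(u_j)$. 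The only nontrivial ingredient behind this auxiliary identity is the relation $\varrho(w_1,w_2)\circ T=T\circ\rho(Tw_1,Tw_2)$, an immediate consequence of the twisted Rota-Baxter equation \eqref{twisted Rota-Baxter operator}.

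The main technical obstacle is the sign bookkeeping. When $D_\mathfrak{X}\iota(v_1)$ sits in the middle slot, one must invoke $[\alpha,\beta,\gamma]_\Phi=-[\beta,\alpha,\gamma]_\Phi$ together with $\varrho(u_1,u_2)=-\varrho(u_2,u_1)$; when $D_\mathfrak{X}\iota(u_2)$ sits in the third slot, the cyclic rotation $[\alpha,\beta,\gamma]_\Phi=[\gamma,\alpha,\beta]_\Phi$ (and the analogous cyclic invariance of $\Phi$) is needed to bring it into the first slot before the auxiliary identity applies. Once these three identifications are made, the right-hand contributions read $-\varrho(v_1,u_2)\delta(\mathfrak{X})(u_1)$, $-\varrho(u_2,u_1)\delta(\mathfrak{X})(v_1)$, and $-\varrho(u_1,v_1)\delta(\mathfrak{X})(u_2)$, matching the expansion of $-\delta(\mathfrak{X})([u_1,v_1,u_2]_T)$ on the left. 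As a fallback, a direct expansion of both sides in the style of Lemma~\ref{twisted Rota-Baxter operator-representation}, using the Fundamental Identity \eqref{eq:jacobi1}, the representation axioms \eqref{representation-1}--\eqref{representation-2}, the 2-cocycle condition \eqref{2-cocycle} on $\Phi$, and the twisted Rota-Baxter equation \eqref{twisted Rota-Baxter operator}, also yields the result.
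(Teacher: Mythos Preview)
Your argument is correct and takes a genuinely different route from the paper's proof. The paper simply expands $({\rm d}_T\delta(\mathfrak{X}))(u_1,u_2,u_3)$ in full, substituting the definitions of $\varrho$, $\delta(\mathfrak{X})$ and $[\cdot,\cdot,\cdot]_T$, and then cancels everything by repeated use of the Fundamental Identity \eqref{eq:jacobi1}, the representation axioms \eqref{representation-1}--\eqref{representation-2}, the twisted Rota-Baxter identity \eqref{twisted Rota-Baxter operator}, and finally the $2$-cocycle condition \eqref{2-cocycle} on $\Phi$; this is exactly the ``fallback'' you mention at the end. Your main argument instead packages $\delta(\mathfrak{X})$ as $-\pi\circ\ad^{\Phi}_{(x,0),(y,0)}\circ\iota$ in the twisted semi-direct product $\g\ltimes_\Phi V$, and reduces the cocycle identity to the Leibniz rule for the inner derivation $D_{\mathfrak X}$ together with the single auxiliary fact $\pi\bigl([(a,u),\iota(w_1),\iota(w_2)]_\Phi\bigr)=\varrho(w_1,w_2)\,\pi(a,u)$, whose verification is precisely the twisted Rota-Baxter equation. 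This is more conceptual: it explains \emph{why} $\delta(\mathfrak X)$ is closed (it is the shadow of an inner derivation of the larger algebra restricted to the graph) and makes the role of each hypothesis transparent, whereas the paper's computation is opaque but requires no auxiliary constructions. Both are valid; your version would also adapt more readily to the $n$-Lie setting.
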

\begin{proof}
For all $u_1,u_2,u_3\in V,$ we have
\begin{eqnarray*}
&&({\rm d_{T}}\delta(\mathfrak{X}))(u_1,u_2,u_3)\\
&=&[Tu_1,Tu_2,T\rho(\mathfrak{X})u_3]_{\g}-[Tu_1,Tu_2,[\mathfrak{X},Tu_3]_{\g}]_{\g}+[Tu_1,Tu_2,T\Phi(\mathfrak{X},Tu_3)]_{\g}\\
&&+[T\rho(\mathfrak{X})u_1,Tu_2,Tu_3]_{\g}-[[\mathfrak{X},Tu_1]_{\g},Tu_2,Tu_3,]_{\g}+[T\Phi(\mathfrak{X},Tu_1),Tu_2,Tu_3]_{\g}\\
&&+[Tu_1,T\rho(\mathfrak{X})u_2,Tu_3]_{\g}-[Tu_1,[\mathfrak{X},Tu_2]_{\g},Tu_3]_{\g}+[Tu_1,T\Phi(\mathfrak{X},Tu_2),Tu_3]_{\g}\\
&&-T\rho(\mathfrak{X})\Big(\rho(Tu_1,Tu_2)u_3+\rho(Tu_2,Tu_3)u_1+\rho(Tu_3,Tu_1)u_2+\Phi(Tu_1,Tu_2,Tu_3)\Big)\\
&&+[\mathfrak{X},[Tu_1,Tu_2,Tu_3]_{\g}]_{\g}-T\Phi(\mathfrak{X},[Tu_1,Tu_2,Tu_3]_{\g})\\
&&-T\rho\Big(T\rho(\mathfrak{X})u_3-[\mathfrak{X},Tu_3]_{\g}+T\Phi(\mathfrak{X},Tu_3),Tu_1\Big)u_2\\
&&-T\rho\Big(Tu_2,T\rho(\mathfrak{X})u_3-[\mathfrak{X},Tu_3]_{\g}+T\Phi(\mathfrak{X},Tu_3)\Big)u_1\\
&&-T\Phi\Big(Tu_1,Tu_2,T\rho(\mathfrak{X})u_3-[\mathfrak{X},Tu_3]_{\g}+T\Phi(\mathfrak{X},Tu_3)\Big)\\
&&-T\rho\Big(T\rho(\mathfrak{X})u_1-[\mathfrak{X},Tu_1]_{\g}+T\Phi(\mathfrak{X},Tu_1),Tu_2\Big)u_3\\
&&-T\rho\Big(Tu_3,T\rho(\mathfrak{X})u_1-[\mathfrak{X},Tu_1]_{\g}+T\Phi(\mathfrak{X},Tu_1)\Big)u_2\\
&&-T\Phi\Big(Tu_2,Tu_3,T\rho(\mathfrak{X})u_1-[\mathfrak{X},Tu_1]_{\g}+T\Phi(\mathfrak{X},Tu_1)\Big)\\
&&-T\rho\Big(T\rho(\mathfrak{X})u_2-[\mathfrak{X},Tu_2]_{\g}+T\Phi(\mathfrak{X},Tu_2),Tu_3\Big)u_1\\
&&-T\rho\Big(Tu_1,T\rho(\mathfrak{X})u_2-[\mathfrak{X},Tu_2]_{\g}+T\Phi(\mathfrak{X},Tu_2)\Big)u_3\\
&&-T\Phi\Big(Tu_3,Tu_1,T\rho(\mathfrak{X})u_2-[\mathfrak{X},Tu_2]_{\g}+T\Phi(\mathfrak{X},Tu_2)\Big)\\
&=&T\Big(\rho(Tu_1,Tu_2)\rho(\mathfrak{X})u_3+\rho(Tu_2,T\rho(\mathfrak{X})u_3)u_1+\rho(T\rho(\mathfrak{X})u_3,Tu_1)u_2+\Phi(Tu_1,Tu_2,T\rho(\mathfrak{X})u_3)\Big)\\
&&+T\Big(\rho(T\rho(\mathfrak{X})u_1,Tu_2)u_3+\rho(Tu_2,Tu_3)\rho(\mathfrak{X})u_1+\rho(Tu_3,T\rho(\mathfrak{X})u_1)u_2+\Phi(T\rho(\mathfrak{X})u_1,Tu_2,Tu_3)\Big)\\
&&+T\Big(\rho(Tu_1,T\rho(\mathfrak{X})u_2)u_3+\rho(T\rho(\mathfrak{X})u_2,Tu_3)u_1+\rho(Tu_3,Tu_1)\rho(\mathfrak{X})u_2+\Phi(Tu_1,T\rho(\mathfrak{X})u_2,,Tu_3)\Big)\\
&&+[Tu_1,Tu_2,T\Phi(\mathfrak{X},Tu_3)]_{\g}+[T\Phi(\mathfrak{X},Tu_1),Tu_2,Tu_3]_{\g}+[Tu_1,T\Phi(\mathfrak{X},Tu_2),Tu_3]_{\g}\\
&&-T\rho(\mathfrak{X})\Big(\rho(Tu_1,Tu_2)u_3+\rho(Tu_2,Tu_3)u_1+\rho(Tu_3,Tu_1)u_2+\Phi(Tu_1,Tu_2,Tu_3)\Big)\\
&&-T\Big(\rho(T\rho(\mathfrak{X})u_3,Tu_1)u_2-\rho([\mathfrak{X},Tu_3]_{\g},Tu_1)u_2+\rho(T\Phi(\mathfrak{X},Tu_3),Tu_1)u_2\Big)\\
&&-T\Big(\rho(Tu_2,T\rho(\mathfrak{X})u_3)u_1-\rho(Tu_2,[\mathfrak{X},Tu_3]_{\g})u_1+\rho(Tu_2, T\Phi(\mathfrak{X},Tu_3)u_1)\Big)\\
&&-T\Big(\Phi(Tu_1,Tu_2,T\rho(\mathfrak{X})u_3)-\Phi(Tu_1,Tu_2,[\mathfrak{X},Tu_3]_{\g})+\Phi(Tu_1,Tu_2,T\Phi(\mathfrak{X},Tu_3))\Big)\\
&&-T\Big(\rho(T\rho(\mathfrak{X})u_1,Tu_2)u_3-\rho([\mathfrak{X},Tu_1]_{\g},Tu_2)u_3+\rho(T\Phi(\mathfrak{X},Tu_1),Tu_2)u_3\Big)\\
&&-T\Big(\rho(Tu_3,T\rho(\mathfrak{X})u_1)u_2-\rho(Tu_3,[\mathfrak{X},Tu_1]_{\g})u_2+\rho(Tu_3, T\Phi(\mathfrak{X},Tu_1)u_2)\Big)\\
&&-T\Big(\Phi(Tu_2,Tu_3,T\rho(\mathfrak{X})u_1)-\Phi(Tu_2,Tu_3,[\mathfrak{X},Tu_1]_{\g})+\Phi(Tu_2,Tu_3,T\Phi(\mathfrak{X},Tu_1))\Big)\\
&&-T\Big(\rho(T\rho(\mathfrak{X})u_2,Tu_3)u_1-\rho([\mathfrak{X},Tu_2]_{\g},Tu_3)u_1+\rho(T\Phi(\mathfrak{X},Tu_2),Tu_3)u_1\Big)\\
&&-T\Big(\rho(Tu_1,T\rho(\mathfrak{X})u_2)u_3-\rho(Tu_1,[\mathfrak{X},Tu_2]_{\g})u_3+\rho(Tu_1, T\Phi(\mathfrak{X},Tu_2)u_3)\Big)\\
&&-T\Big(\Phi(Tu_3,Tu_1,T\rho(\mathfrak{X})u_2)-\Phi(Tu_3,Tu_1,[\mathfrak{X},Tu_2]_{\g})+\Phi(Tu_3,Tu_1,T\Phi(\mathfrak{X},Tu_2))\Big)\\
&&-T\Phi(\mathfrak{X},[Tu_1,Tu_2,Tu_3]_{\g})\\
&=&T\Big(\Phi(Tu_1,Tu_2,[\mathfrak{X},Tu_3]_{\g})+\Phi(Tu_2,Tu_3,[\mathfrak{X},Tu_1]_{\g})+\Phi(Tu_3,Tu_1,[\mathfrak{X},Tu_2]_{\g})\\
&&+\rho(Tu_1,Tu_2)\Phi(\mathfrak{X},Tu_3)+\rho(Tu_2,Tu_3)\Phi(\mathfrak{X},Tu_1)+\rho(Tu_3,Tu_1)\Phi(\mathfrak{X},Tu_2)\\
&&-\Phi(\mathfrak{X},[Tu_1,Tu_2,Tu_3]_{\g})-\rho(\mathfrak{X})\Phi(Tu_1,Tu_2,Tu_3)\Big)\\
&=&0.
\end{eqnarray*}
Thus, we deduce that ${\rm d_{T}}\delta(\mathfrak{X})=0.$ The proof is finished.
\end{proof}
We now give the cohomology of $\Phi$-twisted Rota-Baxter operators on $3$-Lie algebras.

Let $T$ be a $\Phi$-twisted Rota-Baxter operator on a $3$-Lie algebra $(\g,[\cdot,\cdot,\cdot]_{\g})$
with respect to a representation $(V;\rho).$ Define the set of $n$-cochains by
\begin{eqnarray}
C_{T}^{n}(V;\g)=
\left\{\begin{array}{rcl}
{}C_{\Li}^{n-1}(V;\g),\quad n\geq 2,\\
{}\g\wedge\g,\quad n=1.
\end{array}\right.
\end{eqnarray}

Define ${\partial}:C_{T}^{n}(V;\g)\rightarrow C_{T}^{n+1}(V;\g)$ by
\begin{eqnarray}
{\partial}=
\left\{\begin{array}{rcl}
{}{\rm d_{T}},\quad n\geq 2,\\
{}\delta,\quad n=1.
\end{array}\right.
\end{eqnarray}
Then $(\mathop{\oplus}\limits_{n=1}^{\infty} C_{T}^{n}(V;\g),\partial)$ is a cochain complex. Denote the set of $n$-cocycles by $\huaZ^n_T(V;\g),$ the set of $n$-coboundaries by $\huaB^n_T(V;\g)$ and $n$-th cohomology group by
\begin{eqnarray}
\huaH^n_T(V;\g)=\huaZ^n(V;\g)/\huaB^n(V;\g),\quad n\geq1.
\end{eqnarray}

\begin{defi}
The cohomology of the cochain complex $(\mathop{\oplus}\limits_{n=1}^{\infty} C_{T}^{n}(V;\g),\partial)$  is taken to be the {\bf cohomology for the  $\Phi$-twisted Rota-Baxter operator $T$}.
\end{defi}

\subsection{Infinitesimal deformations of $\Phi$-twisted Rota-Baxter operators  on $3$-Lie algebras}
Now we use the cohomology theory to characterize   infinitesimal deformations of $\Phi$-twisted Rota-Baxter operators.
\begin{defi}
Let $T$ and $T'$ be $\Phi$-twisted Rota-Baxter operators on a $3$-Lie algebra $\g$ with respect to a representation $(V;\rho).$ A {\bf homomorphism} of
$\Phi$-twisted Rota-Baxter operators from $T$ to $T'$ consists a $3$-Lie algebra homomorphism $\phi:\g\rightarrow \g$ and a linear map
$\psi:V\rightarrow V$ such that
\begin{eqnarray}
 \label{condition-1}\phi\circ T&=&T'\circ \psi,\\
  \label{condition-2}\psi(\rho(x,y)u)&=&\rho(\phi(x),\phi(y))(\psi(u)),\quad \forall x,y\in \g, u\in V,\\
  \label{condition-3}\psi\circ \Phi&=&\Phi\circ(\phi\otimes\phi\otimes\phi).
\end{eqnarray}
In particular, if both $\phi$ and $\psi$ are invertible, $(\phi,\psi)$ is called an {\bf isomorphism} from $T$ to $T'$.
\end{defi}

\begin{defi}
Let $T:V\rightarrow \g$ be a $\Phi$-twisted Rota-Baxter operator on a $3$-Lie algebra $(\g,[\cdot,\cdot,\cdot]_{\g})$ with respect to a representation $(V;\rho)$ and
$\frkT:V\rightarrow\g$ a linear map. If $T_t=T+t\frkT$ is still a $\Phi$-twisted Rota-Baxter operator on the $3$-Lie algebra $\g$ with respect to the representation $(V;\rho)$
for all $t$, we say that $\frkT$ generates a {\bf one-parameter infinitesimal deformation} of $T$.
\end{defi}
It is direct to deduce that $T_t=T+t\frkT $ is a one-parameter infinitesimal deformation of the $\Phi$-twisted Rota-Baxter operator $T$ if and only if for any $u,v,w\in V,$
\begin{eqnarray}
\label{equivalent-1}\qquad&&[\frkT u,Tv,Tw]_{\g}+[Tu,\frkT v,Tw]_{\g}+[Tu,Tv,\frkT w]_{\g}\\
\nonumber&=&T\Big(\rho(\frkT w,Tu)v+\rho(Tv,\frkT w)u+\rho(\frkT u,Tv)w+\rho(Tw,\frkT u)v+\rho(\frkT v,Tw)u\\
\nonumber&&+\rho(Tu,\frkT v)w+\Phi(\frkT u,Tv,Tw)+\Phi(Tu,\frkT v,Tw)+\Phi(Tu,Tv,\frkT w)\Big)\\
\nonumber&&+\frkT\Big(\rho(Tu,Tv)w+\rho(Tv,Tw)u+\rho(Tw,Tu)v+\Phi(Tu,Tv,Tw)\Big),\\
\label{equivalent-2}\qquad&&[\frkT u,\frkT v,T(w)]_{\g}+[\frkT v,\frkT w,T(u)]_{\g}+[\frkT w,\frkT u,T(v)]_{\g}\\
\nonumber&=&\frkT\Big(\rho(Tw,\frkT u)v+\rho(\frkT v,Tw)u+\rho(Tu,\frkT v)w+\rho(\frkT w,Tu)v+\rho(Tv,\frkT w)u\\
\nonumber&&+\rho(\frkT u,Tv)w+\Phi(\frkT u,Tv,Tw)+\Phi(Tu,\frkT v,Tw)+\Phi(Tu,Tv,\frkT w)\Big)\\
\nonumber&&+T\Big(\rho(\frkT u,\frkT v)w+\rho(\frkT v,\frkT w)u+\rho(\frkT w,\frkT u)v\\
\nonumber&&+\Phi(Tu,\frkT v,\frkT w)+\Phi(\frkT u,Tv,\frkT w)+\Phi(\frkT u,\frkT v,Tw)\Big),\\
\label{equivalent-3}\qquad&&[\frkT u,\frkT v,\frkT w]_{\g}\\
\nonumber&=&T\Phi(\frkT u,\frkT v,\frkT w)+\frkT\Big(\rho(\frkT u,\frkT v)w+\rho(\frkT v,\frkT w)u+\rho(\frkT w,\frkT u)v\Big)\\
\nonumber&&+\frkT\Big(\Phi(Tu,\frkT v,\frkT w)+\Phi(\frkT u,Tv,\frkT w)+\Phi(\frkT u,\frkT v,Tw)\Big),\\
\label{equivalent-4}\qquad&&\frkT\Phi(\frkT u,\frkT v,\frkT w)=0.
\end{eqnarray}

 Note that \eqref{equivalent-1} means that $\frkT$ is a $1$-cocycle of the $\Phi$-twisted Rota-Baxter operator $T$. Hence, $\frkT$ defines a cohomology class in $\huaH^2_{T}(V;\g)$.

\begin{defi}
Let $T$ be a $\Phi$-twisted Rota-Baxter operator on a $3$-Lie algebra $\g$ with respect to a representation $(V;\rho).$ Two one-parameter infinitesimal deformations $T^1_{t}=T+t\frkT_{1}$ and  $T^2_{t}=T+t\frkT_{2}$ are said to be {\bf equivalent} if there exist $\mathfrak{X}\in\g\wedge\g$ such that $(\Id_{\g}+t\ad_{\mathfrak{X}},\Id_{V}+t\rho(\mathfrak{X})+t\Phi(\mathfrak{X},T))$ is a homomorphism from $T^1_{t}$ to $T^2_{t}$, where $\Phi(\mathfrak{X},T):V \rightarrow V$ is defined by
$$\Phi(\mathfrak{X},T)(u)=\Phi(\mathfrak{X},Tu),\quad \forall u\in V.$$
In particular, a one-parameter infinitesimal deformation $T_{t}=T+t\frkT_{1}$ of a $\Phi$-twisted Rota-Baxter operator $T$ is said to be {\bf trivial} if there exist $\mathfrak{X}\in \g\wedge\g$ such that $(\Id_{\g}+t\ad_{\mathfrak{X}},\Id_{V}+t\rho(\mathfrak{X})+t\Phi(\mathfrak{X},T))$ is a homomorphism from $T_{t}$ to $T.$
\end{defi}

Let $(\Id_{\g}+t\ad_{\mathfrak{X}},\Id_{V}+t\rho(\mathfrak{X})+t\Phi(\mathfrak{X},T))$ be a homomorphism from $T^1_{t}$ to $T^2_{t}.$
By \eqref{condition-1} we get,
\begin{equation*}
\quad (\Id_{\g}+t\ad_{\mathfrak{X}})(T+t\frkT_1)(u)=(T+t\frkT_2)(\Id_V+t\rho(\mathfrak{X})+t\Phi(\mathfrak{X},T))(u),
\end{equation*}
which implies
\begin{eqnarray}\label{Nijenhuis-element-4}
\left\{\begin{array}{rcl}
{}\frkT_1(u)-\frkT_2(u)&=&T\rho(\mathfrak{X})u-[\mathfrak{X},Tu]_{\g}+T\Phi(\mathfrak{X},Tu),\\
{}[\mathfrak{X},\frkT_1(u)]_{\g}&=&\frkT_2(\rho(\mathfrak{X})u+\Phi(\mathfrak{X},Tu)).
\end{array}\right.
\end{eqnarray}

Now we are ready to give the main result in this section.
\begin{thm}
Let $T$ be a $\Phi$-twisted Rota-Baxter operator on a $3$-Lie algebra $(\g,[\cdot,\cdot,\cdot]_{\g})$ with respect to a representation $(V;\rho).$
If two one-parameter infinitesimal deformations $T^1_{t}=T+t\frkT_{1}$ and $T^2_{t}=T+t\frkT_{2}$ are equivalent, then $\frkT_{1}$ and $\frkT_{2}$
 are in the same cohomology class in $\huaH^2_{T}(V;\g)$.
\end{thm}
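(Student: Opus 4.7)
The plan is to extract from the equivalence condition exactly the coboundary relation identifying the cohomology classes of $\frkT_1$ and $\frkT_2$. The argument will be essentially algebraic, reading off the degree-$t$ term of the homomorphism condition \eqref{condition-1}.

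First I would note that each $\frkT_i$ defines a class in $\huaH^2_T(V;\g)$. Indeed, since $T+t\frkT_i$ is a $\Phi$-twisted Rota-Baxter operator for every $t$, the coefficient of $t$ in the defining relation \eqref{twisted Rota-Baxter operator} gives exactly \eqref{equivalent-1}, which, as already pointed out in the paper, is the $2$-cocycle condition $\partial\frkT_i=0$ in $C^\bullet_T(V;\g)$ (recalling the degree shift $C^2_T(V;\g)=C^1_{\Li}(V;\g)=\Hom(V,\g)$ and that $\partial|_{C^2_T}={\rm d}_T$).

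Next, I would use the hypothesis that $(\Id_\g+t\ad_{\mathfrak{X}},\Id_V+t\rho(\mathfrak{X})+t\Phi(\mathfrak{X},T))$ is a homomorphism of $\Phi$-twisted Rota-Baxter operators from $T^1_t$ to $T^2_t$. Writing out the compatibility \eqref{condition-1}, namely
\begin{equation*}
(\Id_\g+t\ad_{\mathfrak{X}})\circ(T+t\frkT_1)=(T+t\frkT_2)\circ\bigl(\Id_V+t\rho(\mathfrak{X})+t\Phi(\mathfrak{X},T)\bigr),
\end{equation*}
and equating the coefficients of $t^1$ yields (as was already extracted in \eqref{Nijenhuis-element-4})
\begin{equation*}
\frkT_1(u)-\frkT_2(u)=T\rho(\mathfrak{X})u-[\mathfrak{X},Tu]_\g+T\Phi(\mathfrak{X},Tu),\qquad\forall\,u\in V.
\end{equation*}

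The right-hand side is, by definition, $\delta(\mathfrak{X})(u)$. Since by construction $\partial=\delta$ on $C^1_T(V;\g)=\g\wedge\g$, we conclude $\frkT_1-\frkT_2=\partial(\mathfrak{X})\in\huaB^2_T(V;\g)$. Combined with the cocycle property established in the first paragraph, this gives $[\frkT_1]=[\frkT_2]$ in $\huaH^2_T(V;\g)$, finishing the proof. No genuine obstacle arises; the only conceptual point is keeping track of the degree shift between the $3$-Lie cochain complex and the twisted Rota-Baxter cochain complex, so that a $1$-coboundary $\delta(\mathfrak{X})$ of the induced $3$-Lie algebra on $V$ is read as a $2$-coboundary $\partial(\mathfrak{X})$ of the twisted Rota-Baxter complex.
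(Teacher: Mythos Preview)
Your proposal is correct and follows essentially the same approach as the paper: both extract the coefficient of $t$ from condition \eqref{condition-1} to obtain $\frkT_1(u)-\frkT_2(u)=T\rho(\mathfrak{X})u-[\mathfrak{X},Tu]_\g+T\Phi(\mathfrak{X},Tu)=(\partial\mathfrak{X})(u)$, which is exactly the first line of \eqref{Nijenhuis-element-4} already recorded before the theorem. Your write-up simply makes the degree shift and the cocycle property of each $\frkT_i$ more explicit than the paper's one-line proof.
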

\begin{proof}
 It is easy to see from the first condition of \eqref{Nijenhuis-element-4} that
\begin{eqnarray*}
 \frkT_1(u)&=& \frkT_2(u)+(\partial\mathfrak{X})(u),\quad \forall u\in V,
\end{eqnarray*}
which implies that $\frkT_1$ and $\frkT_2$ are in the same cohomology class.
\end{proof}

\section{NS-$3$-Lie algebras}\label{sec:GM}
In this section, we introduce the notion of an NS-$3$-Lie algebra. An NS-$3$-Lie algebra gives rise to a 3-Lie algebra and a representation on itself. We show that a twisted Rota-Baxter operator induces an NS-$3$-Lie algebra. Thus NS-$3$-Lie algebras can be viewed as the underlying
algebraic structures of twisted Rota-Baxter operators on $3$-Lie algebras.

\begin{defi}\label{defi-3-NS-Lie algebra}
Let $A$ be a vector space together with two linear maps $\{\cdot,\cdot,\cdot\},[\cdot,\cdot,\cdot]:A\otimes A\otimes A\rightarrow A$ in which $[\cdot,\cdot,\cdot]$ is skew-symmetric. The triple $(A,\{\cdot,\cdot,\cdot\},[\cdot,\cdot,\cdot])$ is called an
{\bf NS-$3$-Lie algebra} if the following identities hold:
\begin{eqnarray}
\label{3-NS-Lie-1}\{x_1,x_2,x_3\}&=&-\{x_2,x_1,x_3\},\\
\label{3-NS-Lie-2}\{x_1,x_2,\{x_3,x_4,x_5\}\}&=&\{x_3,x_4,\{x_1,x_2,x_5\}\}+\{\Courant{x_1,x_2,x_3},x_4,x_5\}\\
\nonumber&&+\{x_3,\Courant{x_1,x_2,x_4},x_5\},\\
\label{3-NS-Lie-3}\{\Courant{x_1,x_2,x_3},x_4,x_5\}&=&\circlearrowleft_{x_1,x_2,x_3}\{x_1,x_2,\{x_3,x_4,x_5\}\},\\
\label{3-NS-Lie-4}[x_1,x_2,\Courant{x_3,x_4,x_5}]&=&\circlearrowleft_{x_3,x_4,x_5}[ x_3,x_4,\Courant{x_1,x_2,x_5}]-\{x_1,x_2,[x_3,x_4,x_5]\}\\
\nonumber&&+\circlearrowleft_{x_3,x_4,x_5}\{x_3,x_4,[x_1,x_2,x_5]\},
\end{eqnarray}
for all $x_i\in A, 1\leq i\leq5$ and $\Courant{\cdot,\cdot,\cdot}$ is defined by
\begin{eqnarray}
\label{3-NS-Lie-5} \Courant{x,y,z}=\circlearrowleft_{x,y,z}\{x,y,z\}+[x,y,z],\quad \forall x,y,z\in A,
\end{eqnarray}
where $\circlearrowleft_{x,y,z}\{x,y,z\}=\{x,y,z\}+\{y,z,x\}+\{z,x,y\}$.
\end{defi}

\begin{rmk}
Let $(A,\{\cdot,\cdot,\cdot\}, [\cdot,\cdot,\cdot])$ be an NS-$3$-Lie algebra. On the one hand, if $\{\cdot,\cdot,\cdot\}=0,$
we get that $(A,[\cdot,\cdot,\cdot])$ is a $3$-Lie algebra. On the other hand, if $[\cdot,\cdot,\cdot]=0,$ then $(A,\{\cdot,\cdot,\cdot\})$
is a $3$-pre-Lie algebra which was introduced in \cite{BGS-3-Bialgebras} in the study of $3$-Lie Yang-Baxter equations.
Thus, NS-$3$-Lie algebras are generalizations of both $3$-Lie algebras and $3$-pre-Lie algebras.
\end{rmk}

\begin{thm}
Let $(A,\{\cdot,\cdot,\cdot\},[\cdot,\cdot,\cdot])$ be an NS-$3$-Lie algebra. Then $(A, \Courant{\cdot,\cdot,\cdot})$
is a $3$-Lie algebra which is called the sub-adjacent $3$-Lie algebra of $(A,\{\cdot,\cdot,\cdot\},[\cdot,\cdot,\cdot])$ and denoted by $A^c$.

Moreover, $(A;L)$ is a representation of the $3$-Lie algebra $A^c$, where the linear map $L:\otimes^2 A\rightarrow \gl(A)$ is defined by
$$ L(x,y)z=\{x,y,z\},\quad \forall~x,y,z\in A.$$
\end{thm}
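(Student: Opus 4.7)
\medskip

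\textbf{Plan of proof.} The plan is to verify, in order, the three requirements: (i) $\Courant{\cdot,\cdot,\cdot}$ is totally skew-symmetric; (ii) $\Courant{\cdot,\cdot,\cdot}$ satisfies the Fundamental Identity \eqref{eq:jacobi1}; (iii) the map $L$ satisfies the two representation axioms \eqref{representation-1}--\eqref{representation-2} for the 3-Lie algebra $A^c$. The last step is almost a tautology; the middle step carries the real content.

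First I would establish skew-symmetry of $\Courant{\cdot,\cdot,\cdot}$. Since $[\cdot,\cdot,\cdot]$ is skew by assumption, it suffices to show that the cyclic sum $\circlearrowleft_{x,y,z}\{x,y,z\}$ is totally skew. Using only \eqref{3-NS-Lie-1} (skewness of $\{\cdot,\cdot,\cdot\}$ in the first two slots), a short check shows $\circlearrowleft_{y,x,z}\{y,x,z\}=-\circlearrowleft_{x,y,z}\{x,y,z\}$ and $\circlearrowleft_{x,z,y}\{x,z,y\}=-\circlearrowleft_{x,y,z}\{x,y,z\}$; combined with the skewness of $[\cdot,\cdot,\cdot]$ this yields total skew-symmetry of $\Courant{\cdot,\cdot,\cdot}$.

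The main task is to verify the Fundamental Identity
\begin{equation*}
\Courant{x_1,x_2,\Courant{x_3,x_4,x_5}}=\Courant{\Courant{x_1,x_2,x_3},x_4,x_5}+\Courant{x_3,\Courant{x_1,x_2,x_4},x_5}+\Courant{x_3,x_4,\Courant{x_1,x_2,x_5}}.
\end{equation*}
My strategy is to expand every $\Courant{\cdot,\cdot,\cdot}$ via \eqref{3-NS-Lie-5} and separate the resulting terms into two classes: the \emph{pure} terms (products of only $\{\cdot,\cdot,\cdot\}$'s) and the \emph{mixed} terms (in which at least one $[\cdot,\cdot,\cdot]$ appears). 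For the pure terms, I will use \eqref{3-NS-Lie-2} to rewrite each $\{x_i,x_j,\{x_k,x_l,x_m\}\}$ and \eqref{3-NS-Lie-3} to handle each $\{\Courant{x_i,x_j,x_k},x_l,x_m\}$, after which they cancel by an exercise in reindexing cyclic sums. The mixed terms split further into those containing $[x_3,x_4,x_5]$, $[\Courant{x_1,x_2,x_3},x_4,x_5]$, $[x_1,x_2,\Courant{x_3,x_4,x_5}]$, etc.; the key identity that ties them together is \eqref{3-NS-Lie-4}, which I would apply to the term $[x_1,x_2,\Courant{x_3,x_4,x_5}]$ on the LHS and use the skew-symmetry of $[\cdot,\cdot,\cdot]$ to match the resulting cyclic sums against the mixed contributions on the RHS. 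The main obstacle here is bookkeeping: the three terms on the right of \eqref{3-NS-Lie-4} each decompose into cyclic sums, so I expect to need a careful tabulation of signs and cyclic permutations rather than any new clever idea.

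Finally, with $A^c$ in hand, I would verify that $L(x,y)z=\{x,y,z\}$ is a representation. The defining relation \eqref{representation-1} applied to a test vector $y\in A$ reads
\begin{equation*}
\{x_1,x_2,\{x_3,x_4,y\}\}-\{x_3,x_4,\{x_1,x_2,y\}\}=\{\Courant{x_1,x_2,x_3},x_4,y\}+\{x_3,\Courant{x_1,x_2,x_4},y\},
\end{equation*}
which is exactly \eqref{3-NS-Lie-2} with $x_5=y$; and \eqref{representation-2} applied similarly becomes
\begin{equation*}
\{\Courant{x_1,x_2,x_3},x_4,y\}=\{x_1,x_2,\{x_3,x_4,y\}\}+\{x_2,x_3,\{x_1,x_4,y\}\}+\{x_3,x_1,\{x_2,x_4,y\}\},
\end{equation*}
which is precisely \eqref{3-NS-Lie-3}. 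Hence the representation axioms hold by construction and the proof is complete.
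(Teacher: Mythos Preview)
Your proposal is correct and follows essentially the same approach as the paper: both verify skew-symmetry from \eqref{3-NS-Lie-1}, expand the Fundamental Identity via \eqref{3-NS-Lie-5} and cancel using \eqref{3-NS-Lie-2}, \eqref{3-NS-Lie-3}, \eqref{3-NS-Lie-4}, and then observe that \eqref{3-NS-Lie-2} and \eqref{3-NS-Lie-3} are literally the representation axioms for $L$. Your separation into ``pure'' and ``mixed'' terms makes the bookkeeping slightly more explicit than the paper's direct expansion, but the argument is the same.
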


\begin{proof}
By \eqref{3-NS-Lie-1}, the $3$-commutator $\Courant{\cdot,\cdot,\cdot}$ given by \eqref{3-NS-Lie-5} is skew-symmetric.

For $x_1,x_2,x_3,x_4,x_5\in A,$ by \eqref{3-NS-Lie-2}, \eqref{3-NS-Lie-3} and \eqref{3-NS-Lie-4}, we have
\begin{eqnarray*}
&&\Courant{x_1,x_2,\Courant{x_3,x_4,x_5}}-\Courant{\Courant{x_1,x_2,x_3},x_4,x_5}-\Courant{x_3,\Courant{x_1,x_2,x_4},x_5}-\Courant{x_3,x_4,\Courant{x_1,x_2,x_5}}\\
&=&\{x_1,x_2,\{x_3,x_4,x_5\}\}+\{x_1,x_2,\{x_4,x_5,x_3\}\}+\{x_1,x_2,\{x_5,x_3,x_4\}\}+\{x_1,x_2,[x_3,x_4,x_5]\}\\
&&+\{\Courant{x_3,x_4,x_5},x_1,x_2\}+\{x_2,\Courant{x_3,x_4,x_5},x_1\}+[x_1,x_2,\Courant{x_3,x_4,x_5}]\\
&&-\{x_4,x_5,\{x_1,x_2,x_3\}\}-\{x_4,x_5,\{x_2,x_3,x_1\}\}-\{x_4,x_5,\{x_3,x_1,x_2\}\}-\{x_4,x_5,[x_1,x_2,x_3]\}\\
&&-\{\Courant{x_1,x_2,x_3},x_4,x_5\}-\{x_5,\Courant{x_1,x_2,x_3},x_4\}-[\Courant{x_1,x_2,x_3},x_4,x_5]\\
&&-\{x_5,x_3,\{x_1,x_2,x_4\}\}-\{x_5,x_3,\{x_2,x_4,x_1\}\}-\{x_5,x_3,\{x_4,x_1,x_2\}\}-\{x_5,x_3,[x_1,x_2,x_4]\}\\
&&-\{x_3,\Courant{x_1,x_2,x_4},x_5\}-\{\Courant{x_1,x_2,x_4},x_3,x_5\}-[x_3,\Courant{x_1,x_2,x_4},x_5]\\
&&-\{x_3,x_4,\{x_1,x_2,x_5\}\}-\{x_3,x_4,\{x_5,x_1,x_2\}\}-\{x_3,x_4,\{x_2,x_5,x_1\}\}-\{x_3,x_4,[x_1,x_2,x_5]\}\\
&&-\{\Courant{x_1,x_2,x_5},x_3,x_4\}-\{x_4,\Courant{x_1,x_2,x_5},x_3\}-[x_3,x_4,\Courant{x_1,x_2,x_5}]\\
&=&0,
\end{eqnarray*}
which implies that $(A,\Courant{\cdot,\cdot,\cdot})$ is a $3$-Lie algebra.

By \eqref{3-NS-Lie-2} and \eqref{3-NS-Lie-3}, we have
 \begin{eqnarray*}
[L(x_1,x_2),L(x_3,x_4)](x_5)&=&L(\Courant{x_1,x_2,x_3},x_4)(x_5)+L(x_3,\Courant{x_1,x_2,x_4})(x_5),\\
 L(\Courant{x_1,x_2,x_3},x_4)(x_5)&=&\Big(L(x_1,x_2)L(x_3,x_4)+L(x_2,x_3)L(x_1,x_4)+L(x_3,x_1)L(x_2,x_4)\Big)(x_5).
 \end{eqnarray*}
 Therefore, $(A;L)$ is a representation the sub-adjacent  $3$-Lie algebra $A^c.$
\end{proof}

The following results illustrate that NS-$3$-Lie algebras can be viewed as the underlying algebraic structures of $\Phi$-twisted Rota-Baxter operators on   $3$-Lie algebras.

\begin{thm}\label{construct-3-NS-Lie algebra}
Let $T:V\rightarrow \g$ be a $\Phi$-twisted Rota-Baxter operator on a $3$-Lie algebra $(\g,[\cdot,\cdot,\cdot]_{\g})$ with respect to a representation $(V;\rho)$. Then
\begin{eqnarray}\label{twisted-Ns}
\qquad\{u_1,u_2,u_3\}=\rho(Tu_1,Tu_2)u_3 \quad and \quad [u_1,u_2,u_3]=\Phi(Tu_1,Tu_2,Tu_3), \quad\forall u_1,u_2,u_3 \in V,
\end{eqnarray}
defines an NS-$3$-Lie algebra structure on $V$.
\end{thm}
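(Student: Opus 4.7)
The strategy is to verify the five conditions \eqref{3-NS-Lie-1}--\eqref{3-NS-Lie-4} defining an NS-$3$-Lie algebra. The key observation is that the sub-adjacent bracket $\Courant{\cdot,\cdot,\cdot}$ determined by \eqref{3-NS-Lie-5} coincides with the $3$-Lie bracket $[\cdot,\cdot,\cdot]_{T}$ on $V$ constructed in Corollary \ref{Induce-3-Lie}; indeed, assembling $\circlearrowleft_{u_1,u_2,u_3}\rho(Tu_1,Tu_2)u_3$ together with $\Phi(Tu_1,Tu_2,Tu_3)$ reproduces formula \eqref{3-Lie-on-v}. Hence $T$ is a homomorphism and $T\Courant{u_1,u_2,u_3}=[Tu_1,Tu_2,Tu_3]_{\g}$, which will be invoked repeatedly.

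Granting this identification, the first three axioms are routine. Skew-symmetry \eqref{3-NS-Lie-1} is immediate from $\rho$ being defined on $\wedge^{2}\g$. For \eqref{3-NS-Lie-2}, after substituting the definition of $\{\cdot,\cdot,\cdot\}$ and replacing $T\Courant{u_1,u_2,u_i}$ by $[Tu_1,Tu_2,Tu_i]_{\g}$, the required equality becomes precisely the representation axiom \eqref{representation-1}. In the same way, \eqref{3-NS-Lie-3} reduces directly to \eqref{representation-2}. Thus these three identities merely repackage the assumption that $\rho$ is a representation.

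The core of the argument is identity \eqref{3-NS-Lie-4}. Its left-hand side evaluates to $\Phi(Tu_1,Tu_2,[Tu_3,Tu_4,Tu_5]_{\g})$. On the right-hand side, the three $[\cdot,\cdot,\Courant{\cdot,\cdot,\cdot}]$ terms produce a cyclic sum of terms of the form $\Phi(Tu_i,Tu_j,[Tu_1,Tu_2,Tu_k]_{\g})$, while the $\rho$-terms contribute $-\rho(Tu_1,Tu_2)\Phi(Tu_3,Tu_4,Tu_5)$ together with the cyclic sum $\rho(Tu_3,Tu_4)\Phi(Tu_1,Tu_2,Tu_5)+\rho(Tu_4,Tu_5)\Phi(Tu_1,Tu_2,Tu_3)+\rho(Tu_5,Tu_3)\Phi(Tu_1,Tu_2,Tu_4)$. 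After using the cyclic symmetry of $\Phi\in\Hom(\wedge^{3}\g,V)$ to realign the terms $\Phi(Tu_i,Tu_j,[Tu_1,Tu_2,Tu_k]_{\g})$ into the shape appearing in \eqref{2-cocycle}, the identity follows exactly from $\Phi$ being a 2-cocycle.

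The main obstacle I anticipate is the bookkeeping in the last step: aligning the cyclic permutations in \eqref{3-NS-Lie-4} with the three permutations appearing in \eqref{2-cocycle} requires invoking the three-fold cyclic symmetry of $\Phi$ with correct signs. It is worth emphasising that neither the Fundamental Identity of $\g$ nor the $\Phi$-twisted Rota-Baxter relation itself plays any role in the proof; both are absorbed into Corollary \ref{Induce-3-Lie}, which is why the verification reduces purely to the representation axioms for $\rho$ and the 2-cocycle condition on $\Phi$.
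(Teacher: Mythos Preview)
Your proposal is correct and follows essentially the same approach as the paper. The paper verifies \eqref{3-NS-Lie-1}--\eqref{3-NS-Lie-4} in exactly the order you describe, reducing \eqref{3-NS-Lie-2} and \eqref{3-NS-Lie-3} to \eqref{representation-1} and \eqref{representation-2} respectively and \eqref{3-NS-Lie-4} to the $2$-cocycle identity \eqref{2-cocycle}; the only cosmetic difference is that the paper invokes the twisted Rota-Baxter identity \eqref{twisted Rota-Baxter operator} directly each time $T\Courant{u_i,u_j,u_k}$ must be rewritten as $[Tu_i,Tu_j,Tu_k]_{\g}$, whereas you cite this once via the homomorphism statement in Corollary~\ref{Induce-3-Lie}.
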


\begin{proof}
For any $u_1,u_2,u_3\in V,$ it is obvious that
\begin{eqnarray*}
&&\{u_1,u_2,u_3\}=\rho(Tu_1,Tu_2)u_3=-\rho(Tu_2,Tu_1)u_3=-\{u_2,u_1,u_3\}.
\end{eqnarray*}
Furthermore, for $u_1,u_2,u_3,u_4,u_5\in V$, by \eqref{representation-1}, \eqref{twisted Rota-Baxter operator} and \eqref{3-NS-Lie-5}, we have
\begin{eqnarray*}
&&\{u_1,u_2,\{u_3,u_4,u_5\}\}-\{u_3,u_4,\{u_1,u_2,u_5\}\}-\{\Courant{u_1,u_2,u_3},u_4,u_5\}-\{u_3,\Courant{u_1,u_2,u_4},u_5\}\\
&=&\{u_1,u_2,\rho(Tu_3,Tu_4)u_5\}-\{u_3,u_4,\rho(Tu_1,Tu_2)u_5\}\\
&&-\{\rho(Tu_1,Tu_2)u_3+\rho(Tu_2,Tu_3)u_1+\rho(Tu_3,Tu_1)u_2+\Phi(Tu_1,Tu_2,Tu_3),u_4,u_5\}\\
&&-\{u_3,\rho(Tu_1,Tu_2)u_4+\rho(Tu_2,Tu_4)u_1+\rho(Tu_4,Tu_1)u_2+\Phi(Tu_1,Tu_2,Tu_4),u_5\}\\
&=&\rho(Tu_1,Tu_2)\rho(Tu_3,Tu_4)u_5-\rho(Tu_3,Tu_4)\rho(Tu_1,Tu_2)u_5\\
&&-\rho\Big(T\rho(Tu_1,Tu_2)u_3+T\rho(Tu_2,Tu_3)u_1+T\rho(Tu_3,Tu_1)u_2+T\Phi(Tu_1,Tu_2,Tu_3),Tu_4\Big)u_5\\
&&-\rho\Big(Tu_3,T\rho(Tu_1,Tu_2)u_4+T\rho(Tu_2,Tu_4)u_1+T\rho(Tu_4,Tu_1)u_2+T\Phi(Tu_1,Tu_2,Tu_4)\Big)u_5\\
&=&0.
\end{eqnarray*}
This implies that \eqref{3-NS-Lie-2} in Definition \ref{defi-3-NS-Lie algebra} holds.
Similarly, for \eqref{3-NS-Lie-3}, by \eqref{representation-2}, we can verify that
\begin{eqnarray*}
&&\{\Courant{u_1,u_2,u_3},u_4,u_5\}-\circlearrowleft_{u_1,u_2,u_3}\{u_1,u_2,\{u_3,u_4,u_5\}\}\\
&=&\rho([Tu_1,Tu_2,Tu_3]_{\g},Tu_4)u_5-\rho(Tu_1,Tu_2)\rho(Tu_3,Tu_4)u_5\\
&&-\rho(Tu_2,Tu_3)\rho(Tu_1,Tu_4)u_5-\rho(Tu_3,Tu_1)\rho(Tu_2,Tu_4)u_5\\
&=&0.
\end{eqnarray*}
Moreover, since $\Phi$ is a $2$-cocycle of $\g$ with coefficients in $V,$ we observe that
\begin{eqnarray*}
&&[u_1,u_2,\Courant{u_3,u_4,u_5}]-\circlearrowleft_{u_3,u_4,u_5}[u_3,u_4,\Courant{u_1,u_2,u_5}]+\{u_1,u_2,[u_3,u_4,u_5]\}\\
&&-\circlearrowleft_{u_3,u_4,u_5}\{u_3,u_4,[u_1,u_2,u_5]\}\\
&=&\Phi(Tu_1,Tu_2,[Tu_3,Tu_4,Tu_5]_{\g})-\Phi([Tu_1,Tu_2,Tu_3]_{\g},Tu_4,Tu_5)\\
&&-\Phi(Tu_3,[Tu_1,Tu_2,Tu_4]_{\g},Tu_5)-\Phi(Tu_3,Tu_4,[Tu_1,Tu_2,Tu_5]_{\g})\\
&&+[Tu_1,Tu_2,\Phi(Tu_3,Tu_4,Tu_5)]_{\g}-[Tu_4,Tu_5,\Phi(Tu_1,Tu_2,Tu_3)]_{\g}\\
&&-[Tu_3,Tu_4,\Phi(Tu_1,Tu_2,Tu_5)]_{\g}-[Tu_5,Tu_3,\Phi(Tu_1,Tu_2,Tu_4)]_{\g}\\
&=&0,
\end{eqnarray*}
which implies that \eqref{3-NS-Lie-4} holds.
Hence $(V,\{\cdot,\cdot,\cdot\},[\cdot,\cdot,\cdot])$ is an NS-$3$-Lie algebra.
\end{proof}

Note that the subadjacent 3-Lie algebra of the above NS-3-Lie algebra $(V,\{\cdot,\cdot,\cdot\},[\cdot,\cdot,\cdot])$ is exactly the 3-Lie algebra given in Corollary \ref{Induce-3-Lie}.

\begin{defi}
A  homomorphism from an NS-$3$-Lie algebra $(A,\{\cdot,\cdot,\cdot\}, [\cdot,\cdot,\cdot])$ to $(A',\{\cdot,\cdot,\cdot\}', [\cdot,\cdot,\cdot]')$ is a linear map $\psi:A\lon A'$ satisfying
\begin{eqnarray}
\psi(\{x,y,z\})=\{\psi(x),\psi(y),\psi(z)\}',\\
\psi([x,y,z])=[\psi(x),\psi(y),\psi(z)]',
\end{eqnarray}
for all $x,y,z\in A.$
\end{defi}

\begin{pro}
Let $T$ and $T'$ be $\Phi$-twisted Rota-Baxter operators on a $3$-Lie algebra $\g$ with respect to a representation $(V;\rho),$ and $(V,\{\cdot,\cdot,\cdot\},[\cdot,\cdot,\cdot])$, $(V,\{\cdot,\cdot,\cdot\}',[\cdot,\cdot,\cdot]')$ be the induced NS-3-Lie algebras.  Let $(\phi,\psi)$ be a homomorphism  from $T$ to $T'$. Then $\psi$ is a homomorphism from the NS-3-Lie algebra $(V,\{\cdot,\cdot,\cdot\},[\cdot,\cdot,\cdot])$ to $(V,\{\cdot,\cdot,\cdot\}',[\cdot,\cdot,\cdot]')$.
\end{pro}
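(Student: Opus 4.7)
The plan is to verify the two homomorphism identities directly from the definitions of $\{\cdot,\cdot,\cdot\}$, $[\cdot,\cdot,\cdot]$ given by \eqref{twisted-Ns} and the three compatibility conditions \eqref{condition-1}--\eqref{condition-3} defining a homomorphism of $\Phi$-twisted Rota-Baxter operators. No further structural input (such as the axioms of an NS-$3$-Lie algebra, or cocycle data for $\Phi$) should be needed; the result is purely a transport-of-structure statement.

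First, I would check multiplicativity with respect to $\{\cdot,\cdot,\cdot\}$. Using \eqref{twisted-Ns}, $\psi(\{u,v,w\}) = \psi(\rho(Tu,Tv)w)$. Condition \eqref{condition-2} with $x=Tu$, $y=Tv$ turns this into $\rho(\phi(Tu),\phi(Tv))\psi(w)$, and then \eqref{condition-1} replaces $\phi\circ T$ with $T'\circ \psi$, yielding $\rho(T'\psi(u),T'\psi(v))\psi(w) = \{\psi(u),\psi(v),\psi(w)\}'$.

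Next, I would handle the bracket $[\cdot,\cdot,\cdot]$ analogously: $\psi([u,v,w]) = \psi(\Phi(Tu,Tv,Tw))$. Condition \eqref{condition-3}, which asserts $\psi\circ\Phi = \Phi\circ(\phi\otimes\phi\otimes\phi)$, rewrites this as $\Phi(\phi(Tu),\phi(Tv),\phi(Tw))$, and then applying \eqref{condition-1} componentwise gives $\Phi(T'\psi(u),T'\psi(v),T'\psi(w)) = [\psi(u),\psi(v),\psi(w)]'$.

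There is no real obstacle here: the two lines above are essentially a one-step calculation each, and the statement is exactly the compatibility that \eqref{condition-1}--\eqref{condition-3} were designed to encode. The only thing to be mildly careful about is the order in which the identities are applied ($\phi T=T'\psi$ has to be fed into the already-$\phi$-transported expression), but this is bookkeeping rather than a genuine difficulty.
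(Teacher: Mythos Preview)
Your proposal is correct and matches the paper's proof essentially line for line: both verify $\psi(\{u,v,w\})=\{\psi(u),\psi(v),\psi(w)\}'$ and $\psi([u,v,w])=[\psi(u),\psi(v),\psi(w)]'$ by directly unwinding \eqref{twisted-Ns} and applying \eqref{condition-2}, \eqref{condition-3}, and \eqref{condition-1} in exactly the order you describe.
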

\begin{proof}
For all $u,v,w\in V,$ by \eqref{condition-1}-\eqref{condition-3} and \eqref{twisted-Ns}, we have
\begin{eqnarray*}
\psi(\{u,v,w\})&=&\psi(\rho(Tu,Tv)w)=\rho(\phi(Tu),\phi(Tv))\psi(w)=\rho(T'\psi(u),T'\psi(v))\psi(w)\\
&=&\{\psi(u),\psi(v),\psi(w)\}',\\
 \psi([u,v,w])&=&\psi(\Phi(Tu,Tv,Tw))=\Phi(\phi(Tu),\phi(Tv),\phi(Tw))=\Phi(T'\psi(u),T'\psi(v), T'\psi(w))\\
&=&[\psi(u),\psi(v),\psi(w)]',
\end{eqnarray*}
which implies that $\psi$ is a homomorphism between the induced NS-3-Lie algebras.
\end{proof}

Thus, Theorem \ref{construct-3-NS-Lie algebra} can be enhanced to a functor from the category of $\Phi$-twisted Rota-Baxter operators on a $3$-Lie algebra $\g$ with respect to a representation $(V;\rho)$ to the category of NS-3-Lie algebras.

\section{Nijenhuis operators and Reynolds operators on $3$-Lie algebras}\label{sec:N}

In this section, we study two special classes of twisted Rota-Baxter operators on 3-Lie algebras which are provided by Nijenhuis operators and Reynolds operators on $3$-Lie algebras.

\subsection{Nijenhuis operators  on   $3$-Lie algebras} In this subsection, we show that a Nijenhuis operator on a $3$-Lie algebra  gives rise to a twisted Rota-Baxter operator  on a 3-Lie algebra.

Recall from \cite{Liu-Jie-Feng} that a Nijenhuis operator on a $3$-Lie algebra $(\g,[\cdot,\cdot,\cdot]_{\g})$ is a linear map $N:\g\rightarrow\g$ satisfying
\begin{eqnarray}\label{Nijenhuis-1}
[Nx,Ny,Nz]_{\g}&=&N\Big([Nx,Ny,z]_{\g}+[x,Ny,Nz]_{\g}+[Nx,y,Nz]_{\g}\\
\nonumber&&-N[Nx,y,z]_{\g}-N[x,Ny,z]_{\g}-N[x,y,Nz]_{\g}\\
\nonumber&&+N^2[x,y,z]_{\g}\Big), \quad \forall  x,y,z\in \g.
\end{eqnarray}
A Nijenhuis operator endows the vector space $\g$ a new $3$-Lie bracket $[\cdot,\cdot,\cdot]_{N}$,
 which is defined by
\begin{eqnarray}
\label{Nijenhuis-operator}[x,y,z]_{N}&=&[Nx,Ny,z]_{\g}+[x,Ny,Nz]_{\g}+[Nx,y,Nz]_{\g}\\
\nonumber&&-N[Nx,y,z]_{\g}-N[x,Ny,z]_{\g}-N[x,y,Nz]_{\g}+N^2[x,y,z]_{\g}, \quad \forall x,y,z\in \g.
\end{eqnarray}
The 3-Lie algebra $(\g,[\cdot,\cdot,\cdot]_{N})$ will be called the {\bf deformed $3$-Lie algebra}, and denoted by $\g_{N}.$ It is obvious that $N$ is a homomorphism from the deformed $3$-Lie algebra $(\g,[\cdot,\cdot,\cdot]_{N})$ to $(\g,[\cdot,\cdot,\cdot]_{\g})$.

\begin{lem}\label{lem-representation}
Let $N$ be a Nijenhuis operator on a $3$-Lie algebra $(\g,[\cdot,\cdot,\cdot]_{\g})$. Define $\rho_{N}:\wedge^2\g_{N}\rightarrow \gl(\g)$ by
\begin{eqnarray}\label{representation-nijenhuis}
\rho_{N}(x,y)z=[Nx,Ny,z]_{\g}, \quad \forall~x,y \in \g_{N},~z\in \g.
\end{eqnarray}
Then $(\g;\rho_{N})$ is a representation of the deformed $3$-Lie algebra $\g_{N}.$
\end{lem}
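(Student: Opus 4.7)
The plan is to observe that the formula $\rho_{N}(x,y)z=[Nx,Ny,z]_{\g}$ can be rewritten as $\rho_{N}(x,y)=\ad^{\g}_{Nx,Ny}$, where $\ad^{\g}$ is the adjoint representation of the original $3$-Lie algebra $(\g,[\cdot,\cdot,\cdot]_{\g})$. Thus $\rho_{N}$ is essentially the adjoint action of $\g$ pulled back via $N$, and the key structural fact I will exploit is that, as already noted just after \eqref{Nijenhuis-operator}, $N$ is a $3$-Lie algebra homomorphism from $\g_{N}$ to $\g$, i.e.\ $N[x_{1},x_{2},x_{3}]_{N}=[Nx_{1},Nx_{2},Nx_{3}]_{\g}$.

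To verify that $(\g;\rho_{N})$ is a representation of $\g_{N}$, I need to check the two identities \eqref{representation-1} and \eqref{representation-2} with the bracket $[\cdot,\cdot,\cdot]_{N}$ in place of $[\cdot,\cdot,\cdot]_{\g}$. For \eqref{representation-1}, I will evaluate both sides on $z\in\g$: the left-hand side becomes
\begin{eqnarray*}
[Nx_{1},Nx_{2},[Nx_{3},Nx_{4},z]_{\g}]_{\g}-[Nx_{3},Nx_{4},[Nx_{1},Nx_{2},z]_{\g}]_{\g},
\end{eqnarray*}
which, by the Fundamental Identity of $\g$ (i.e.\ \eqref{representation-1} for the adjoint representation of $\g$ applied to $Nx_{1},\ldots,Nx_{4}$), equals $[[Nx_{1},Nx_{2},Nx_{3}]_{\g},Nx_{4},z]_{\g}+[Nx_{3},[Nx_{1},Nx_{2},Nx_{4}]_{\g},z]_{\g}$. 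Using the homomorphism property $N[x_{i},x_{j},x_{k}]_{N}=[Nx_{i},Nx_{j},Nx_{k}]_{\g}$, this is exactly $\rho_{N}([x_{1},x_{2},x_{3}]_{N},x_{4})z+\rho_{N}(x_{3},[x_{1},x_{2},x_{4}]_{N})z$.

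For \eqref{representation-2}, I again evaluate on $z$. The left-hand side is $[N[x_{1},x_{2},x_{3}]_{N},Nx_{4},z]_{\g}=[[Nx_{1},Nx_{2},Nx_{3}]_{\g},Nx_{4},z]_{\g}$ by the same homomorphism property, while the right-hand side is
\begin{eqnarray*}
[Nx_{1},Nx_{2},[Nx_{3},Nx_{4},z]_{\g}]_{\g}+[Nx_{2},Nx_{3},[Nx_{1},Nx_{4},z]_{\g}]_{\g}+[Nx_{3},Nx_{1},[Nx_{2},Nx_{4},z]_{\g}]_{\g}.
\end{eqnarray*}
The equality of these two expressions is precisely identity \eqref{representation-2} for the adjoint representation of $\g$, which is guaranteed by the earlier example declaring $\ad$ to be a representation of any $3$-Lie algebra.

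The proof is thus conceptually straightforward, and I expect no serious obstacle: the only non-routine input is the homomorphism property $N[\cdot,\cdot,\cdot]_{N}=[N\cdot,N\cdot,N\cdot]_{\g}$, which is already in hand (and ultimately equivalent to the Nijenhuis condition \eqref{Nijenhuis-1}). Everything else is a direct translation of the adjoint representation axioms of $\g$ applied to the elements $Nx_{i}$. The mild bookkeeping is simply to keep the subscripts $\g$ versus $N$ straight and to invoke the homomorphism property in the correct slots; no auxiliary lemma beyond what is stated in the excerpt is required.
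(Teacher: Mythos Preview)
Your proposal is correct and follows essentially the same approach as the paper: both reduce the representation identities for $\rho_N$ to the Fundamental Identity (equivalently, the adjoint representation axioms) of $(\g,[\cdot,\cdot,\cdot]_\g)$ applied to the elements $Nx_i$, invoking the homomorphism property $N[x_1,x_2,x_3]_N=[Nx_1,Nx_2,Nx_3]_\g$ to rewrite $\rho_N([x_i,x_j,x_k]_N,\cdot)$. The only cosmetic difference is that you frame this via the pullback $\rho_N(x,y)=\ad^\g_{Nx,Ny}$, whereas the paper writes out the expansion directly and leaves \eqref{representation-2} to a ``similarly''.
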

\begin{proof}
For all $x_1,x_2,x_3,x_4\in \g_{N},x\in \g,$ by \eqref{eq:jacobi1}, \eqref{Nijenhuis-1} and \eqref{Nijenhuis-operator}, we have
\begin{eqnarray*}
&&\Big(\rho_{N}(x_1,x_2)\rho_{N}(x_3,x_4)-\rho_{N}(x_3,x_4)\rho_{N}(x_1,x_2)-\rho_{N}([x_1,x_2,x_3]_{N},x_4)-\rho_{N}(x_3,[x_1,x_2,x_4]_{N})\Big)(x)\\
&=&\rho_{N}(x_1,x_2)[Nx_3,Nx_4,x]_{\g}-\rho_{N}(x_3,x_4)[Nx_1,Nx_2,x]_{\g}\\
&&-[N[x_1,x_2,x_3]_{N},Nx_4,x]_{\g}-[Nx_3,N[x_1,x_2,x_4]_{N},x]_{\g}\\
&=&[Nx_1,Nx_2,[Nx_3,Nx_4,x]_{\g}]_{\g}-[Nx_3,Nx_4,[Nx_1,Nx_2,x]_{\g}]_{\g}\\
&&-[[Nx_1,Nx_2,Nx_3]_{\g},Nx_4,x]_{\g}-[Nx_3,[Nx_1,Nx_2,Nx_4]_{\g},x]_{\g}\\
&=&0,
\end{eqnarray*}
which implies that \eqref{representation-1} in Definition \ref{defi-representation}  holds.

Similarly, we can show that \eqref{representation-2} also holds.
Thus, $(\g;\rho_{N})$ is a representation of the deformed $3$-Lie algebra $(\g,[\cdot,\cdot,\cdot]_{N})$.
\end{proof}
\emptycomment{
&&(\rho_{N}(x_1,[x_2,x_3,x_4]_{N})-\rho_{N}(x_3,x_4)\rho_{N}(x_1,x_2)+\rho_{N}(x_2,x_4)\rho_{N}(x_1,x_3)-\rho_{N}(x_2,x_3)\rho_{N}(x_1,x_4))(x)\\
&=&[Nx_1,[Nx_2,Nx_3,Nx_4]_{\g},x]_{\g}-[Nx_3,Nx_4,[Nx_1,Nx_2,x]_{\g}]_{\g}\\
&&+[Nx_2,Nx_4,[Nx_1,Nx_3,x]_{\g}]_{\g}-[Nx_2,Nx_3,[Nx_1,Nx_4,x]_{\g}]_{\g}\\
&=&0.}
\begin{thm}\label{pro-Nijenhuis}
Let $N$ be a Nijenhuis operator on a $3$-Lie algebra $(\g,[\cdot,\cdot,\cdot]_{\g})$. Define the map $\Phi:\wedge^3\g_{N}\rightarrow \g$ by
 \begin{eqnarray}
\label{cocycle-nijenhuis}\Phi(x,y,z)=-N([Nx,y,z]_{\g}+[x,Ny,z]_{\g}+[x,y,Nz]_{\g})+N^2[x,y,z]_{\g},\quad \forall x,y,z\in \g_{N}.
 \end{eqnarray}
 Then $\Phi$ is a $2$-cocycle of $\g_{N}$ with coefficients in $(\g;\rho_{N}),$ and the identity map $\Id:\g\rightarrow\g_{N}$ is a $\Phi$-twisted Rota-Baxter operator on $\g_{N}$ with respect to the representation $(\g;\rho_{N}).$
\end{thm}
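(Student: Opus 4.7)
The plan is to settle the two assertions in opposite order. The Rota-Baxter assertion for $T=\Id$ reduces immediately to unpacking the defining formula \eqref{Nijenhuis-operator} for $[\cdot,\cdot,\cdot]_N$, while the 2-cocycle assertion for $\Phi$ requires a direct but structured expansion combining the Nijenhuis identity \eqref{Nijenhuis-1}, the fundamental identity for $[\cdot,\cdot,\cdot]_\g$, and the representation property of $\rho_N$ from Lemma \ref{lem-representation}.

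For the Rota-Baxter assertion, the equation \eqref{twisted Rota-Baxter operator} to verify for $T=\Id:\g\to\g_N$ reads $[u,v,w]_N=\rho_N(u,v)w+\rho_N(v,w)u+\rho_N(w,u)v+\Phi(u,v,w)$ for $u,v,w\in\g$. Expanding the three $\rho_N$ terms by \eqref{representation-nijenhuis} and using total skew-symmetry of $[\cdot,\cdot,\cdot]_\g$ to reposition the argument not acted on by $N$ yields $[Nu,Nv,w]_\g+[u,Nv,Nw]_\g+[Nu,v,Nw]_\g$, which are precisely the three terms of \eqref{Nijenhuis-operator} with two factors of $N$ inside the bracket. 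Adding $\Phi(u,v,w)$ as given by \eqref{cocycle-nijenhuis} supplies the three mixed terms with one $N$ inside and one $N$ outside, together with the single $N^2[u,v,w]_\g$ term, so the right-hand side collapses to exactly $[u,v,w]_N$.

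For the 2-cocycle assertion, I would substitute the defining formulas \eqref{representation-nijenhuis}, \eqref{Nijenhuis-operator} and \eqref{cocycle-nijenhuis} into the 2-cocycle equation \eqref{2-cocycle} written now over $\g_N$ with representation $(\g;\rho_N)$, and show that the resulting expression vanishes. Each of the five $\Phi$-terms expands into four monomials of the shape $N^a[\cdot,\cdot,\cdot]_\g$, and each inner argument of the shape $[x_i,x_j,x_k]_N$ further expands into seven such monomials; the organizing principle is to collect the resulting expressions by the distribution of factors of $N$ across the five $x$-slots and the outer power, and verify cancellation pattern by pattern. The key ingredients are the fundamental identity for $[\cdot,\cdot,\cdot]_\g$ to handle the patterns with no outer $N$, and the Nijenhuis identity \eqref{Nijenhuis-1} in the form $[Nx,Ny,Nz]_\g=N[x,y,z]_N$ to convert between patterns.

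The main obstacle is the sheer bookkeeping: this verification is mechanical but produces on the order of one hundred monomials to be matched, entirely in the spirit of the long calculation in the proof of Lemma \ref{twisted Rota-Baxter operator-representation}. A conceptually cleaner but computationally comparable route is to show directly that the twisted semi-direct bracket \eqref{twisted-semi-direct-product} on $\g_N\oplus\g$ satisfies the fundamental identity: the first-component identity is automatic from $\g_N$ being a 3-Lie algebra, and the second-component identity splits into a $\rho_N$-only part covered by Lemma \ref{lem-representation} plus a residual part that is precisely the 2-cocycle equation for $\Phi$ to be proved.
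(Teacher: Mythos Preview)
Your proposal is correct and follows essentially the same approach as the paper: both parts are handled by direct computation, with the Rota-Baxter identity for $T=\Id$ reducing to the definition \eqref{Nijenhuis-operator} of $[\cdot,\cdot,\cdot]_N$, and the 2-cocycle condition verified by a long expansion of $({\rm d}\Phi)(x_1,\dots,x_5)$. The only differences are cosmetic (you treat the Rota-Baxter part first and sketch the organizing principle for the 2-cocycle calculation more explicitly), and your alternative route through the fundamental identity of the twisted semidirect product is a valid reformulation but not what the paper pursues.
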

\begin{proof}
For all $x_1,x_2,x_3,x_4,x_5\in \g_{N},$ by a direct calculation, we have
{\footnotesize
\begin{eqnarray*}
&&(\partial\Phi)(x_1,x_2,x_3,x_4,x_5)\\
&=&\rho_{N}(x_1,x_2)\Phi(x_3,x_4,x_5)-\rho_{N}(x_3,x_4)\Phi(x_1,x_2,x_5)-\rho_{N}(x_4,x_5)\Phi(x_1,x_2,x_3)-\rho_{N}(x_5,x_3)\Phi(x_1,x_2,x_4)\\
&&\Phi([x_3,x_4,x_5]_{N},x_1,x_2)-\Phi([x_1,x_2,x_3]_{N},x_4,x_5)-\Phi([x_1,x_2,x_4]_{N},x_5,x_3)-\Phi([x_1,x_2,x_5]_{N},x_3,x_4)\\
&=&[Nx_1,Nx_2,N^2[x_3,x_4,x_5]_{\g}-N[Nx_3,x_4,x_5]_{\g}-N[x_3,Nx_4,x_5]_{\g}-N[x_3,x_4,Nx_5]_{\g}]_{\g}\\
&&+[Nx_3,Nx_4,N[Nx_1,x_2,x_5]_{\g}+N[x_1,Nx_2,x_5]_{\g}+N[x_1,x_2,Nx_5]_{\g}-N^2[x_1,x_2,x_5]_{\g}]_{\g}\\
&&+[Nx_4,Nx_5,N[Nx_1,x_2,x_3]_{\g}+N[x_1,Nx_2,x_3]_{\g}+N[x_1,x_2,Nx_3]_{\g}-N^2[x_1,x_2,x_3]_{\g}]_{\g}\\
&&+[Nx_5,Nx_3,N[Nx_1,x_2,x_4]_{\g}+N[x_1,Nx_2,x_4]_{\g}+N[x_1,x_2,Nx_4]_{\g}-N^2[x_1,x_2,x_4]_{\g}]_{\g}\\
&&+N\Big(N[[x_3,x_4,x_5]_{N},x_1,x_2]_{\g}-[N[x_3,x_4,x_5]_{N},x_1,x_2]_{\g}-[[x_3,x_4,x_5]_{N},Nx_1,x_2]_{\g}-[[x_3,x_4,x_5]_{N},x_1,Nx_2]_{\g}\Big)\\
&&+N\Big([N[x_1,x_2,x_3]_{N},x_4,x_5]_{\g}+[[x_1,x_2,x_3]_{N},Nx_4,x_5]_{\g}+[[x_1,x_2,x_3]_{N},x_4,Nx_5]_{\g}-N[[x_1,x_2,x_3]_{N},x_4,x_5]_{\g}\Big)\\
&&+N\Big([N[x_1,x_2,x_4]_{N},x_5,x_3]_{\g}+[[x_1,x_2,x_4]_{N},Nx_5,x_3]_{\g}+[[x_1,x_2,x_4]_{N},x_5,Nx_3]_{\g}-N[[x_1,x_2,x_4]_{N},x_5,x_3]_{\g}\Big)\\
&&+N\Big([N[x_1,x_2,x_5]_{N},x_3,x_4]_{\g}+[[x_1,x_2,x_5]_{N},Nx_3,x_4]_{\g}+[[x_1,x_2,x_5]_{N},x_3,Nx_4]_{\g}-N[[x_1,x_2,x_5]_{N},x_3,x_4]_{\g}\Big)\\
&=&0.
\emptycomment
{\textcolor[rgb]{1.00,0.00,0.00}{-N(}[Nx_3,Nx_4,[Nx_1,x_2,x_5]]+[Nx_3,x_4,N[Nx_1,x_2,x_5]]+[x_3,Nx_4,N[Nx_1,x_2,x_5]]\\
&&+[Nx_3,Nx_4,[x_1,Nx_2,x_5]_{\g}]_{\g}+[Nx_3,x_4,N[x_1,Nx_2,x_5]_{\g}]_{\g}+[x_3,Nx_4,N[x_1,Nx_2,x_5]_{\g}]_{\g}\\
&&+[Nx_3,Nx_4,[x_1,x_2,Nx_5]_{\g}]_{\g}+[Nx_3,x_4,N[x_1,x_2,Nx_5]_{\g}]_{\g}+[x_3,Nx_4,N[x_1,x_2,Nx_5]_{\g}]_{\g}\\
&&+[Nx_3,x_4,N^2[x_1,x_2,x_5]_{\g}]_{\g}+[x_3,Nx_4,N^2[x_1,x_2,x_5]_{\g}]_{\g}\textcolor[rgb]{1.00,0.00,0.00}{)}\\
&&+\textcolor[rgb]{1.00,0.00,0.00}{N^2(}[Nx_3,x_4,[Nx_1,x_2,x_5]_{\g}]_{\g}+[x_3,Nx_4,[Nx_1,x_2,x_5]_{\g}]_{\g}+[x_3,x_4,N[Nx_1,x_2,x_5]_{\g}]_{\g}\\
&&+[Nx_3,x_4,[x_1,Nx_2,x_5]_{\g}]_{\g}+[x_3,Nx_4,[x_1,Nx_2,x_5]_{\g}]_{\g}+[x_3,x_4,N[x_1,Nx_2,x_5]_{\g}]_{\g}\\
&&+[Nx_3,x_4,[x_1,x_2,Nx_5]_{\g}]_{\g}+[x_3,Nx_4,[x_1,x_2,Nx_5]_{\g}]_{\g}+[x_3,x_4,N[x_1,x_2,Nx_5]_{\g}]_{\g}\\
&&+[Nx_3,Nx_4,[x_1,x_2,x_5]_{\g}]_{\g}-[x_3,x_4,N^2[x_1,x_2,x_5]_{\g}]_{\g}\textcolor[rgb]{1.00,0.00,0.00}{)}\\
&&\textcolor[rgb]{1.00,0.00,0.00}{-N^3(}[x_3,x_4,[Nx_1,x_2,x_5]_{\g}]_{\g}+[x_3,x_4,[x_1,Nx_2,x_5]_{\g}]_{\g}+[x_3,x_4,[x_1,x_2,Nx_5]_{\g}]_{\g}\\
&&+[Nx_3,x_4,[x_1,x_2,x_5]_{\g}]_{\g}+[x_3,Nx_4,[x_1,x_2,x_5]_{\g}]_{\g}\textcolor[rgb]{1.00,0.00,0.00}{)}+\textcolor[rgb]{1.00,0.00,0.00}{N^4}[x_3,x_4,[x_1,x_2,x_5]_{\g}]_{\g}\\
&&\textcolor[rgb]{1.00,0.00,0.00}{-N(}[Nx_4,Nx_5,[Nx_1,x_2,x_3]_{\g}]_{\g}+[Nx_4,x_5,N[Nx_1,x_2,x_3]_{\g}]_{\g}+[x_4,Nx_5,N[Nx_1,x_2,x_3]_{\g}]_{\g}\\
&&+[Nx_4,Nx_5,[x_1,Nx_2,x_3]_{\g}]_{\g}+[Nx_4,x_5,N[x_1,Nx_2,x_3]_{\g}]+[x_4,Nx_5,N[x_1,Nx_2,x_3]_{\g}]_{\g}\\
&&+[Nx_4,Nx_5,[x_1,x_2,Nx_3]_{\g}]_{\g}+[Nx_4,x_5,N[x_1,x_2,Nx_3]_{\g}]_{\g}+[x_4,Nx_5,N[x_1,x_2,Nx_3]_{\g}]_{\g}\\
&&+[Nx_4,x_5,N^2[x_1,x_2,x_3]_{\g}]_{\g}+[x_4,Nx_5,N^2[x_1,x_2,x_3]_{\g}]_{\g}\textcolor[rgb]{1.00,0.00,0.00}{)}\\
&&\textcolor[rgb]{1.00,0.00,0.00}{+N^2(}[Nx_4,x_5,[Nx_1,x_2,x_3]_{\g}]_{\g}+[x_4,Nx_5,[Nx_1,x_2,x_3]_{\g}]_{\g}+[x_4,x_5,N[Nx_1,x_2,x_3]_{\g}]_{\g}\\
&&+[Nx_4,x_5,[x_1,Nx_2,x_3]_{\g}]_{\g}+[x_4,Nx_5,[x_1,Nx_2,x_3]_{\g}]_{\g}+[x_4,x_5,N[x_1,Nx_2,x_3]_{\g}]_{\g}\\
&&+[Nx_4,x_5,[x_1,x_2,Nx_3]_{\g}]_{\g}+[x_4,Nx_5,[x_1,x_2,Nx_3]_{\g}]_{\g}+[x_4,x_5,N[x_1,x_2,Nx_3]_{\g}]_{\g}\\
&&+[Nx_4,Nx_5,[x_1,x_2,x_3]_{\g}]_{\g}-[x_4,x_5,N^2[x_1,x_2,x_3]_{\g}]_{\g}\textcolor[rgb]{1.00,0.00,0.00}{)}\\
&&\textcolor[rgb]{1.00,0.00,0.00}{-N^3(}[x_4,x_5,[Nx_1,x_2,x_3]_{\g}]_{\g}+[x_4,x_5,[x_1,Nx_2,x_3]_{\g}]_{\g}+[x_4,x_5,[x_1,x_2,Nx_3]_{\g}]_{\g}\\
&&+[Nx_4,x_5,[x_1,x_2,x_3]_{\g}]_{\g}+[x_4,Nx_5,[x_1,x_2,x_3]_{\g}]_{\g}\textcolor[rgb]{1.00,0.00,0.00}{})+\textcolor[rgb]{1.00,0.00,0.00}{N^4}[x_4,x_5,[x_1,x_2,x_3]_{\g}]_{\g}\\
&&\textcolor[rgb]{1.00,0.00,0.00}{-N(}[Nx_5,Nx_3,[Nx_1,x_2,x_4]_{\g}]_{\g}+[Nx_5,x_3,N[Nx_1,x_2,x_4]_{\g}]_{\g}+[x_5,Nx_3,N[Nx_1,x_2,x_4]_{\g}]_{\g})\\
&&+[Nx_5,Nx_3,[x_1,Nx_2,x_4]_{\g}]_{\g}+[Nx_5,x_3,N[x_1,Nx_2,x_4]_{\g}]_{\g}+[x_5,Nx_3,N[x_1,Nx_2,x_4]_{\g}]_{\g}\\
&&+[Nx_5,Nx_3,[x_1,x_2,Nx_4]_{\g}]_{\g}+[Nx_5,x_3,N[x_1,x_2,Nx_4]_{\g}]_{\g}+[x_5,Nx_3,N[x_1,x_2,Nx_4]_{\g}]_{\g}\\
&&+[Nx_5,x_3,N^2[x_1,x_2,x_4]_{\g}]_{\g}+[x_5,Nx_3,N^2[x_1,x_2,x_4]_{\g}]_{\g}\textcolor[rgb]{1.00,0.00,0.00}{)}\\
&&+\textcolor[rgb]{1.00,0.00,0.00}{N^2(}[Nx_5,x_3,[Nx_1,x_2,x_4]_{\g}]_{\g}+[x_5,Nx_3,[Nx_1,x_2,x_4]_{\g}]_{\g}+[x_5,x_3,N[Nx_1,x_2,x_4]_{\g}]_{\g}\\
&&+[Nx_5,x_3,[x_1,Nx_2,x_4]_{\g}]_{\g}+[x_5,Nx_3,[x_1,Nx_2,x_4]_{\g}]_{\g}+[x_5,x_3,N[x_1,Nx_2,x_4]_{\g}]_{\g}\\
&&+[Nx_5,x_3,[x_1,x_2,Nx_4]_{\g}]_{\g}+[x_5,Nx_3,[x_1,x_2,Nx_4]_{\g}]_{\g}+[x_5,x_3,N[x_1,x_2,Nx_4]_{\g}]_{\g}\\
&&+[Nx_5,Nx_3,[x_1,x_2,x_4]_{\g}]_{\g}-[x_5,x_3,N^2[x_1,x_2,x_4]_{\g}]_{\g}\textcolor[rgb]{1.00,0.00,0.00}{)}\\
&&-\textcolor[rgb]{1.00,0.00,0.00}{N^3(}[x_5,x_3,[Nx_1,x_2,x_4]_{\g}]_{\g}+[x_5,x_3,[x_1,Nx_2,x_4]_{\g}]_{\g}+[x_5,x_3,[x_1,x_2,Nx_4]_{\g}]_{\g}\\
&&+[Nx_5,x_3,[x_1,x_2,x_4]_{\g}]_{\g}+[x_5,Nx_3,[x_1,x_2,x_4]_{\g}]_{\g}\textcolor[rgb]{1.00,0.00,0.00}{)}+\textcolor[rgb]{1.00,0.00,0.00}{N^4}[x_5,x_3,[x_1,x_2,x_4]_{\g}]_{\g}\\
&&+\textcolor[rgb]{1.00,0.00,0.00}{N(}[Nx_1,Nx_2,[Nx_3,x_4,x_5]_{\g}]_{\g}+[Nx_1,x_2,N[Nx_3,x_4,x_5]_{\g}]_{\g}+[x_1,Nx_2,N[Nx_3,x_4,x_5]_{\g}]_{\g}\\
&&+[Nx_1,Nx_2,[x_3,Nx_4,x_5]_{\g}]_{\g}+[Nx_1,x_2,N[x_3,Nx_4,x_5]_{\g}]_{\g}+[x_1,Nx_2,N[x_3,Nx_4,x_5]_{\g}]_{\g}\\
&&+[Nx_1,Nx_2,[x_3,x_4,Nx_5]_{\g}]_{\g}+[Nx_1,x_2,N[x_3,x_4,Nx_5]_{\g}]_{\g}+[x_1,Nx_2,N[x_3,x_4,Nx_5]_{\g}]_{\g}\\
&&-[Nx_1,x_2,N^2[x_3,x_4,x_5]_{\g}]_{\g}-[x_1,Nx_2,N^2[x_3,x_4,x_5]_{\g}]_{\g}\textcolor[rgb]{1.00,0.00,0.00}{)}\\
&&-\textcolor[rgb]{1.00,0.00,0.00}{N^2(}[Nx_1,x_2,[Nx_3,x_4,x_5]_{\g}]_{\g}+[x_1,Nx_2,[Nx_3,x_4,x_5]_{\g}]_{\g}+[x_1,x_2,N[Nx_3,x_4,x_5]_{\g}]_{\g}\\
&&+[Nx_1,x_2,[x_3,Nx_4,x_5]_{\g}]_{\g}+[x_1,Nx_2,[x_3,Nx_4,x_5]_{\g}]_{\g}+[x_1,x_2,N[x_3,Nx_4,x_5]_{\g}]_{\g}\\
&&+[Nx_1,x_2,[x_3,x_4,Nx_5]_{\g}]_{\g}+[x_1,Nx_2,[x_3,x_4,Nx_5]_{\g}]_{\g}+[x_1,x_2,N[x_3,x_4,Nx_5]_{\g}]_{\g}\\
&&+[Nx_1,Nx_2,[x_3,x_4,x_5]_{\g}]_{\g}-[x_1,x_2,N^2[x_3,x_4,x_5]_{\g}]_{\g}\textcolor[rgb]{1.00,0.00,0.00}{)}\\
&&+\textcolor[rgb]{1.00,0.00,0.00}{N^3(}[x_1,x_2,[Nx_3,x_4,x_5]_{\g}]_{\g}+[x_1,x_2,[x_3,Nx_4,x_5]_{\g}]_{\g}+[x_1,x_2,[x_3,x_4,Nx_5]_{\g}]_{\g}\\
&&+[Nx_1,x_2,[x_3,x_4,x_5]_{\g}]_{\g}+[x_1,Nx_2,[x_3,x_4,x_5]_{\g}]_{\g}\textcolor[rgb]{1.00,0.00,0.00}{)}-\textcolor[rgb]{1.00,0.00,0.00}{N^4}[x_1,x_2,[x_3,x_4,x_5]_{\g}]_{\g}\\
&&\textcolor[rgb]{0.00,0.07,1.00}{-N(}[N[Nx_1,Nx_2,x_3]_{\g},x_4,x_5]_{\g}+[[Nx_1,Nx_2,x_3]_{\g},Nx_4,x_5]_{\g}+[[Nx_1,Nx_2,x_3]_{\g},x_4,Nx_5]_{\g}\\
&&+[N[Nx_1,x_2,Nx_3]_{\g},x_4,x_5]_{\g}+[[Nx_1,x_2,Nx_3],Nx_4,x_5]+[[Nx_1,x_2,Nx_3],x_4,Nx_5]\\
&&+[N[x_1,Nx_2,Nx_3]_{\g},x_4,x_5]_{\g}+[[x_1,Nx_2,Nx_3],Nx_4,x_5]+[[x_1,Nx_2,Nx_3],x_4,Nx_5]\\
&&+[N^3[x_1,x_2,x_3]_{\g},x_4,x_5]_{\g}+[N^2[x_1,x_2,x_3],Nx_4,x_5]+[N^2[x_1,x_2,x_3],x_4,Nx_5]\\
&&-[N^2[Nx_1,x_2,x_3]_{\g},x_4,x_5]_{\g}-[N[Nx_1,x_2,x_3],Nx_4,x_5]-[N[Nx_1,x_2,x_3],x_4,Nx_5]\\
&&-[N^2[x_1,Nx_2,x_3]_{\g},x_4,x_5]_{\g}-[N[x_1,Nx_2,x_3],Nx_4,x_5]-[N[x_1,Nx_2,x_3],x_4,Nx_5]\\
&&-[N^2[x_1,x_2,Nx_3]_{\g},x_4,x_5]_{\g}-[N[x_1,x_2,Nx_3],Nx_4,x_5]-[N[x_1,x_2,Nx_3],x_4,Nx_5]\textcolor[rgb]{0.00,0.07,1.00}{)}\\
&&\textcolor[rgb]{0.00,0.07,1.00}{+N^2(}[[Nx_1,Nx_2,x_3],x_4,x_5]+[[Nx_1,x_2,Nx_3],x_4,x_5]+[[x_1,Nx_2,Nx_3],x_4,x_5]\\
&&+[N^2[x_1,x_2,x_3],x_4,x_5]-[N[Nx_1,x_2,x_3],x_4,x_5]-[N[x_1,Nx_2,x_3],x_4,x_5]-[N[x_1,x_2,Nx_3],x_4,x_5]\textcolor[rgb]{0.00,0.07,1.00}{)}\\
&&\textcolor[rgb]{0.00,0.07,1.00}{-N(}[N[Nx_1,Nx_2,x_4],x_5,x_3]+[[Nx_1,Nx_2,x_4],Nx_5,x_3]+[[Nx_1,Nx_2,x_4],x_5,Nx_3]\\
&&+[N[Nx_1,x_2,Nx_4],x_5,x_3]+[[Nx_1,x_2,Nx_4],Nx_5,x_3]+[[Nx_1,x_2,Nx_4],x_5,Nx_3]\\
&&+[N[x_1,Nx_2,Nx_4],x_5,x_3]+[[x_1,Nx_2,Nx_4],Nx_5,x_3]+[[x_1,Nx_2,Nx_4],x_5,Nx_3]\\
&&+[N^3[x_1,x_2,x_4],x_5,x_3]+[N^2[x_1,x_2,x_4],Nx_5,x_3]+[N^2[x_1,x_2,x_4],x_5,Nx_3]\\
&&-[N^2[Nx_1,x_2,x_4],x_5,x_3]-[N[Nx_1,x_2,x_4],Nx_5,x_3]-[N[Nx_1,x_2,x_4],x_5,Nx_3]\\
&&-[N^2[x_1,Nx_2,x_4],x_5,x_3]-[N[x_1,Nx_2,x_4],Nx_5,x_3]-[N[x_1,Nx_2,x_4],x_5,Nx_3]\\
&&-[N^2[x_1,x_2,Nx_4],x_5,x_3]-[N[x_1,x_2,Nx_4],Nx_5,x_3]-[N[x_1,x_2,Nx_4],x_5,Nx_3]\textcolor[rgb]{0.00,0.07,1.00}{)}\\
&&+\textcolor[rgb]{0.00,0.07,1.00}{N^2(}[[Nx_1,Nx_2,x_4],x_5,x_3]+[[Nx_1,x_2,Nx_4],x_5,x_3]+[[x_1,Nx_2,Nx_4],x_5,x_3]\\
&&+[N^2[x_1,x_2,x_4],x_5,x_3]-[N[Nx_1,x_2,x_4],x_5,x_3]-[N[x_1,Nx_2,x_4],x_5,x_3]-[N[x_1,x_2,Nx_4],x_5,x_3]\textcolor[rgb]{0.00,0.07,1.00}{)}\\
&&\textcolor[rgb]{0.00,0.07,1.00}{-N(}[N[Nx_1,Nx_2,x_5],x_3,x_4]+[[Nx_1,Nx_2,x_5],Nx_3,x_4]+[[Nx_1,Nx_2,x_5],x_3,Nx_4]\\
&&+[N[Nx_1,x_2,Nx_5],x_3,x_4]+[[Nx_1,x_2,Nx_5],Nx_3,x_4]+[[Nx_1,x_2,Nx_5],x_3,Nx_4]\\
&&+[N[x_1,Nx_2,Nx_5],x_3,x_4]+[[x_1,Nx_2,Nx_5],Nx_3,x_4]+[[x_1,Nx_2,Nx_5],x_3,Nx_4]\\
&&+[N^3[x_1,x_2,x_5],x_3,x_4]+[N^2[x_1,x_2,x_5],Nx_3,x_4]+[N^2[x_1,x_2,x_5],x_3,Nx_4]\\
&&-[N^2[Nx_1,x_2,x_5],x_3,x_4]-[N[Nx_1,x_2,x_5],Nx_3,x_4]-[N[Nx_1,x_2,x_5],x_3,Nx_4]\\
&&-[N^2[x_1,Nx_2,x_5],x_3,x_4]-[N[x_1,Nx_2,x_5],Nx_3,x_4]-[N[x_1,Nx_2,x_5],x_3,Nx_4]\\
&&-[N^2[x_1,x_2,Nx_5],x_3,x_4]-[N[x_1,x_2,Nx_5],Nx_3,x_4]-[N[x_1,x_2,Nx_5],x_3,Nx_4]\textcolor[rgb]{0.00,0.07,1.00}{)}\\
&&+\textcolor[rgb]{0.00,0.07,1.00}{N^2(}[[Nx_1,x_2,Nx_5],x_3,x_4]+[[x_1,Nx_2,Nx_5],x_3,x_4]+[N^2[x_1,x_2,x_5],x_3,x_4]\\
&&+[[Nx_1,Nx_2,x_5],x_3,x_4]-[N[Nx_1,x_2,x_5],x_3,x_4]-[N[x_1,Nx_2,x_5],x_3,x_4]-[N[x_1,x_2,Nx_5],x_3,x_4]\textcolor[rgb]{0.00,0.07,1.00}{)}\\
&&\textcolor[rgb]{0.00,0.07,1.00}{-N(}[N[Nx_3,Nx_4,x_5],x_2,x_1]+[[Nx_3,Nx_4,x_5],Nx_2,x_1]+[[Nx_3,Nx_4,x_5],x_2,Nx_1]\\
&&+[N[Nx_3,x_4,Nx_5]_{\g},x_2,x_1]_{\g}+[[Nx_3,x_4,Nx_5]_{\g},Nx_2,x_1]_{\g}+[[Nx_3,x_4,Nx_5]_{\g},x_2,Nx_1]_{\g}\\
&&+[N[x_3,Nx_4,Nx_5]_{\g},x_2,x_1]_{\g}+[[x_3,Nx_4,Nx_5]_{\g},Nx_2,x_1]_{\g}+[[x_3,Nx_4,Nx_5]_{\g},x_2,Nx_1]_{\g}\\
&&+[N^3[x_3,x_4,x_5]_{\g},x_2,x_1]_{\g}+[N^2[x_3,x_4,x_5]_{\g},Nx_2,x_1]_{\g}+[N^2[x_3,x_4,x_5]_{\g},x_2,Nx_1]_{\g}\\
&&-[N^2[Nx_3,x_4,x_5]_{\g},x_2,x_1]_{\g}-[N[Nx_3,x_4,x_5]_{\g},Nx_2,x_1]_{\g}-[N[Nx_3,x_4,x_5]_{\g},x_2,Nx_1]_{\g}\\
&&-[N^2[x_3,Nx_4,x_5]_{\g},x_2,x_1]_{\g}-[N[x_3,Nx_4,x_5]_{\g},Nx_2,x_1]_{\g}-[N[x_3,Nx_4,x_5]_{\g},x_2,Nx_1]_{\g}\\
&&-[N^2[x_3,x_4,Nx_5]_{\g},x_2,x_1]_{\g}-[N[x_3,x_4,Nx_5]v,Nx_2,x_1]-[N[x_3,x_4,Nx_5],x_2,Nx_1]\textcolor[rgb]{0.00,0.07,1.00}{)}\\
&&+\textcolor[rgb]{0.00,0.07,1.00}{N^2(}[[Nx_3,x_4,Nx_5]_{\g},x_2,x_1]_{\g}+[[x_3,Nx_4,Nx_5]_{\g},x_2,x_1]_{\g}+[N^2[x_3,x_4,x_5]_{\g},x_2,x_1]_{\g}\\
&&+[[Nx_3,Nx_4,x_5]_{\g},x_2,x_1]_{\g}-[N[Nx_3,x_4,x_5]_{\g},x_2,x_1]_{\g}-[N[x_3,Nx_4,x_5]_{\g},x_2,x_1]_{\g}-[N[x_3,x_4,Nx_5]_{\g},x_2,x_1]_{\g}\textcolor[rgb]{0.00,0.07,1.00}{)}\\
&=&0.}
\end{eqnarray*}}
Thus, we deduce that $\Phi$ is a $2$-cocycle of $\g_{N}$ with coefficients in $(\g;\rho_{N}).$

Moreover, by \eqref{representation-nijenhuis} and \eqref{cocycle-nijenhuis}, it is easy to prove that \eqref{twisted Rota-Baxter operator} is equivalent to \eqref{Nijenhuis-operator}, which implies that the identity map $\Id:\g\rightarrow\g_{N}$ is a $\Phi$-twisted Rota-Baxter operator.
The proof is finished.
\end{proof}

\begin{cor}\label{3-NS-Nijenhuis}
Let $(\g,[\cdot,\cdot,\cdot]_{\g})$ be a $3$-Lie algebra and $N:\g\rightarrow \g$ be a Nijenhuis operator. Then there exists an NS-$3$-Lie algebra on $\g$ given by
\begin{eqnarray}
\label{N-NS-1}\{x,y,z\}&=&[Nx,Ny,z]_{\g},\\
\label{N-NS-2}~[x,y,z]&=&-N([Nx,y,z]_{\g}+[x,Ny,z]_{\g}+[x,y,Nz]_{\g})+N^2[x,y,z]_{\g}, \quad \forall x,y,z\in \g.
\end{eqnarray}
\end{cor}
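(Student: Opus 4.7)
The plan is to deduce this corollary as an immediate application of the two preceding results, with essentially no further computation. By Theorem \ref{pro-Nijenhuis}, the identity map $\Id:\g\to\g_N$ is a $\Phi$-twisted Rota-Baxter operator on the deformed $3$-Lie algebra $\g_N$ with respect to the representation $(\g;\rho_N)$, where $\rho_N$ is given by \eqref{representation-nijenhuis} and the $2$-cocycle $\Phi$ by \eqref{cocycle-nijenhuis}. So the hypotheses needed to invoke Theorem \ref{construct-3-NS-Lie algebra} are in place, with the ambient $3$-Lie algebra being $\g_N$, the representation space being $\g$, and the twisted Rota-Baxter operator being $T=\Id$.

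I would then simply specialize the formulas in Theorem \ref{construct-3-NS-Lie algebra} to this setting. The induced NS-$3$-Lie algebra structure lives on the representation space, which here is $\g$. Substituting $T=\Id$ into \eqref{twisted-Ns} gives
\begin{eqnarray*}
\{x,y,z\}&=&\rho_N(\Id\, x,\Id\, y)z \;=\; \rho_N(x,y)z \;=\; [Nx,Ny,z]_\g,\\
~[x,y,z]&=&\Phi(\Id\, x,\Id\, y,\Id\, z) \;=\; \Phi(x,y,z),
\end{eqnarray*}
and by the definition of $\Phi$ in \eqref{cocycle-nijenhuis} the second line is exactly
$$-N\bigl([Nx,y,z]_\g+[x,Ny,z]_\g+[x,y,Nz]_\g\bigr)+N^2[x,y,z]_\g,$$
which matches \eqref{N-NS-2}. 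This is precisely the NS-$3$-Lie algebra structure claimed in \eqref{N-NS-1}--\eqref{N-NS-2}.

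There is essentially no obstacle here: all the hard work has already been done in Lemma \ref{lem-representation} (verifying that $\rho_N$ is a representation of $\g_N$), Theorem \ref{pro-Nijenhuis} (verifying the $2$-cocycle condition for $\Phi$ and the twisted Rota-Baxter identity for $\Id$), and Theorem \ref{construct-3-NS-Lie algebra} (verifying the four NS-$3$-Lie algebra axioms from the twisted Rota-Baxter data). The corollary is then just a matching of formulas under the substitution $T=\Id$, and can be stated in a one-sentence proof that points to Theorems \ref{pro-Nijenhuis} and \ref{construct-3-NS-Lie algebra}.
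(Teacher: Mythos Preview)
Your proposal is correct and follows essentially the same approach as the paper's proof, which also just invokes Theorem~\ref{pro-Nijenhuis} and Theorem~\ref{construct-3-NS-Lie algebra} in a single sentence. You additionally spell out the substitution $T=\Id$ to match the formulas explicitly, which the paper leaves implicit.
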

\begin{proof}
By Theorem \ref{pro-Nijenhuis},  the identity map $\Id:\g\rightarrow\g_{N}$ is a $\Phi$-twisted Rota-Baxter operator on $\g_{N}$ with respect to the representation $(\g;\rho_N)$. By Theorem \ref{construct-3-NS-Lie algebra}, a $\Phi$-twisted Rota-Baxter operator on a $3$-Lie algebra can induce an NS-$3$-Lie algebra.
Hence the conclusion follows.
\end{proof}
\emptycomment{For any $x_1,x_2,x_3\in \g,$ it is obvious that
\begin{eqnarray*}
\{x_1,x_2,x_3\}=[Nx_1,Nx_2,x_3]_{\g}=-[Nx_2,Nx_1,x_3]_{\g}=-\{x_2,x_1,x_3\}.
\end{eqnarray*}
For a Nijenhuis operator $N$, the deformed bracket
\begin{eqnarray*}
[x,y,z]_{N}=\Courant{x,y,z}&=&[Nx,Ny,z]_{\g}+[Nx,Ny,z]_{\g}+[Nx,Ny,z]_{\g}\\
                            &&-N([Nx,y,z]_{\g}+[x,Ny,z]_{\g}+[x,y,Nz]_{\g})+N^2[x,y,z]_{\g}
\end{eqnarray*}
is a $3$-Lie algebra on $\g.$

Furthermore, for all $x_1,x_2,x_3,x_4,x_5\in \g$, by \eqref{3-NS-Lie-5} we have
\begin{eqnarray*}
&&\{x_1,x_2,\{x_3,x_4,x_5\}\}-\{x_3,x_4,\{x_1,x_2,x_5\}\}-\{\Courant{x_1,x_2,x_3},x_4,x_5\}-\{x_3,\Courant{x_1,x_2,x_4},x_5\}\\
&=&[Nx_1,Nx_2,[Nx_3,Nx_4,x_5]_{\g}]_{\g}-[Nx_3,Nx_4,[Nx_1,Nx_2,x_5]_{\g}]_{\g}\\
&&-[N\Courant{x_1,x_2,x_3},Nx_4,x_5]_{\g}-[Nx_3,N\Courant{x_1,x_2,x_4},x_5]\\
&=&[Nx_1,Nx_2,[Nx_3,Nx_4,x_5]_{\g}]_{\g}-[Nx_3,Nx_4,[Nx_1,Nx_2,x_5]_{\g}]_{\g}\\
&&-[[Nx_1,Nx_2,Nx_3]_{\g},Nx_4,x_5]_{\g}-[N_3,[Nx_1,Nx_2,Nx_4]_{\g},x_5]_{\g}\\
&=&0.
\end{eqnarray*}
This implies that \eqref{3-NS-Lie-2} holds.
To prove \eqref{3-NS-Lie-3}, we compute as the follow
\begin{eqnarray*}
&&\{\Courant{x_1,x_2,x_3},x_4,x_5\}-\circlearrowleft_{1,2,3}\{x_1,x_2,\{x_3,x_4,x_5\}\}\\
&=&[N\Courant{x_1,x_2,x_3},Nx_4,x_5]_{\g}-[Nx_1,Nx_2,[Nx_3,Nx_4,x_5]_{\g}]_{\g}-[Nx_2,Nx_3,[Nx_1,Nx_4,x_5]_{\g}]_{\g}\\
&&-[Nx_3,Nx_1,[Nx_1,Nx_4,x_5]_{\g}]_{\g}\\
&=&0.
\end{eqnarray*}
By the similarly computation, it is straightforward to prove the identity \eqref{3-NS-Lie-4}, we omit the process.}
\emptycomment{
\begin{eqnarray*}
&&[x_1,x_2,\Courant{x_3,x_4,x_5}]-\circlearrowleft_{3,4,5}[x_3,x_4,\Courant{x_1,x_2,x_5}]+\{x_1,x_2,[x_3,x_4,x_5]\}\\
&&-\circlearrowleft_{3,4,5}\{x_3,x_4,[x_1,x_2,x_5]\}\\
&=&
&=&-N[Nx_1,x_2,[x_3,x_4,x_5]_{N}]-N[x_1,Nx_2,[x_3,x_4,x_5]_{N}]-N[x_1,x_2,[Nx_3,Nx_4,Nx_5]_{\g}]_{\g}\\
&&+N^2[x_1,x_2,[x_3,x_4,x_5]_{N}]_{\g}\\
&&+N[Nx_3,x_4,[x_1,x_2,x_5]_{N}]_{\g}+N[x_3,Nx_4,[x_1,x_2,x_5]_{N}]_{\g}+N[x_3,x_4,[Nx_1,Nx_2,Nx_5]_{\g}]_{\g}\\
&&-N^2[x_3,x_4,[x_1,x_2,x_5]_{N}]_{\g}\\
&&+N[Nx_4,x_5,[x_1,x_2,x_3]_{N}]_{\g}+N[x_4,Nx_5,[x_1,x_2,x_3]_{N}]_{\g}+N[x_4,x_5,[Nx_1,Nx_2,Nx_3]_{\g}]_{\g}\\
&&-N^2[x_4,x_5,[x_1,x_2,x_3]_{N}]_{\g}\\
&&+N[Nx_5,x_3,[x_1,x_2,x_4]_{N}]_{\g}+N[x_5,Nx_3,[x_1,x_2,x_4]_{N}]_{\g}+N[x_5,x_3,[Nx_1,Nx_2,Nx_4]_{\g}]_{\g}\\
&&-N^2[x_5,x_3,[x_1,x_2,x_4]_{N}]_{\g}\\
&&-[Nx_1,Nx_2,N[Nx_3,x_4,x_5]_{\g}]_{\g}-[Nx_1,Nx_2,N[x_3,Nx_4,x_5]_{\g}]_{\g}\\
&&-[Nx_1,Nx_2,N[x_3,x_4,Nx_5]_{\g}]_{\g}+[Nx_1,Nx_2,N^2[x_3,x_4,x_5]_{\g}]_{\g}\\
&&-[Nx_3,Nx_4,N[Nx_1,x_2,x_5]_{\g}]_{\g}-[Nx_3,Nx_4,N[x_1,Nx_2,x_5]_{\g}]_{\g}\\
&&-[Nx_3,Nx_4,N[x_1,x_2,Nx_5]_{\g}]_{\g}+[Nx_3,Nx_4,N^2[x_1,x_2,x_5]_{\g}]_{\g}\\
&&-[Nx_4,Nx_5,N[Nx_1,x_2,x_3]_{\g}]_{\g}-[Nx_4,Nx_5,N[x_1,Nx_2,x_3]_{\g}]_{\g}\\
&&-[Nx_4,Nx_5,N[x_1,x_2,Nx_3]_{\g}]_{\g}+[Nx_4,Nx_5,N^2[x_1,x_2,x_3]_{\g}]_{\g}\\
&&-[Nx_5,Nx_3,N[Nx_1,x_2,x_4]_{\g}]_{\g}-[Nx_5,Nx_3,N[x_1,Nx_2,x_4]_{\g}]_{\g}\\
&&-[Nx_1,Nx_2,N[x_1,x_2,Nx_4]_{\g}]_{\g}+[Nx_1,Nx_2,N^2[x_1,x_2,x_4]_{\g}]_{\g}\\
&=&0.
\end{eqnarray*}}
\begin{ex}{\rm
Consider the $3$-dimensional $3$-Lie algebra $(\g,[\cdot,\cdot,\cdot]_{\g})$ given with respect to a basis $\{e_1,e_2,e_3\}$ by
$$[e_1,e_2,e_3]_{\g}=e_1.$$
Thanks to Theorem 3.10 in {\rm\cite{Liu-Jie-Feng}}, any linear transformation $N$ on $\g$ is a Nijenhuis operator. Suppose
\begin{eqnarray*}
N=(a_{ij})_{3\times3}=\left(\begin{array}{ccc}
 a_{11}&a_{12}&a_{13}\\
 a_{21}&a_{22}&a_{23}\\
 a_{31}&a_{32}&a_{33}
 \end{array}\right).
 \end{eqnarray*}
By Corollary \ref{3-NS-Nijenhuis}, $(\g,\{\cdot,\cdot,\cdot\}, [\cdot,\cdot,\cdot])$ is a $3$-dimensional NS-$3$-Lie algebra, the brackets $\{\cdot,\cdot,\cdot\}:\otimes^3 \g\rightarrow \g$ and $[\cdot,\cdot,\cdot]:\wedge^3 \g\rightarrow \g$ are given by
 \begin{eqnarray*}
~\{e_i,e_j,e_k\}&=&(-1)^{\tau(ijk)}M_{kk}e_1,\quad i\neq j\neq k,\\
~\{e_1,e_2,e_1\}&=&-\{e_2,e_1,e_1\}=M_{13}e_1,\qquad\{e_1,e_3,e_1\}=-\{e_3,e_1,e_1\}=M_{12}e_1,\\
~\{e_2,e_1,e_2\}&=&-\{e_1,e_2,e_2\}=M_{23}e_1,\qquad\{e_2,e_3,e_2\}=-\{e_3,e_2,e_2\}=-M_{21}e_1,\\
~\{e_3,e_1,e_3\}&=&-\{e_1,e_3,e_3\}=-M_{32}e_1,~~\quad\{e_3,e_2,e_3\}=-\{e_2,e_3,e_3\}=-M_{31}e_1,\\
~[e_i,e_j,e_k]&=&(-1)^{\tau(ijk)}(-M_{22}-M_{33}e_1-M_{12}e_2+M_{13}e_3),\quad i\neq j\neq k,
 \end{eqnarray*}
and all the other brackets are zero, where the terms $\tau(ijk)$ is the inverse table permutation of $i,j,k$ and $M_{ij}$ is the complementary minor of element $a_{ij}.$

In fact, by \eqref{N-NS-1}, we have
 \begin{eqnarray*}
 \{e_i,e_j,e_k\}&=&[Ne_i,Ne_i,e_k]_{\g}\\
 &=&[a_{1i}e_1+a_{2i}e_2+a_{3i}e_3,a_{1j}e_1+a_{2j}e_2+a_{3j}e_3,e_k]_{\g}\\
 &=&[a_{1i}e_1,a_{2j}e_2,e_k]_{\g}+[a_{1i}e_1,a_{3j}e_3,e_k]_{\g}+[a_{2i}e_2,a_{1j}e_1,e_k]_{\g}\\
 &&+[a_{2i}e_2,a_{3j}e_3,e_k]_{\g}+[a_{3i}e_3,a_{1j}e_1,e_k]_{\g}+[a_{3i}e_3,a_{2j}e_2,e_k]_{\g}.
 \end{eqnarray*}
When $k=1,$ we have
\begin{itemize}
 \item if $i=j,$ we get $\{e_i,e_j,e_1\}=0;$
\item if $i\neq j\neq k,$ we get
$\{e_i,e_j,e_1\}=\left|\begin{array}{cc}
 a_{2i}&a_{2j}\\
 a_{3i}&a_{3j}
 \end{array}\right|e_1=(-1)^{\tau(ij1)}M_{11}e_1;$
\item if $i=k=1, j=2~\mbox{or}~3,$ we get
 $$\{e_1,e_2,e_1\}=\left|\begin{array}{cc}
 a_{21}&a_{22}\\
 a_{31}&a_{32}
 \end{array}\right|e_1=M_{13}e_1,\qquad \{e_1,e_3,e_1\}=\left|\begin{array}{cc}
 a_{21}&a_{23}\\
 a_{31}&a_{33}
 \end{array}\right|e_1=M_{12}e_1.$$
\end{itemize}
Similarly, we can obtain all the formulas listed above.

}
\end{ex}

\subsection{Reynolds operators on $3$-Lie algebras}

In this subsection, we introduce the notion of a Reynolds operator  on a $3$-Lie algebra, which turns out to be a special twisted Rota-Baxter operator. The relation between Reynolds operators and derivations  on  $3$-Lie algebras are investigated.

\begin{defi}
Let $(\g,[\cdot,\cdot,\cdot]_{\g})$ be a $3$-Lie algebra. A linear map $R:\g\rightarrow\g$ is called a {\bf Reynolds operator} if
\begin{equation}
\label{Reynolds-operator}[Rx,Ry,Rz]_{\g}=R\Big([Rx,Ry,z]_{\g}+[x,Ry,Rz]_{\g}+[Rx,y,Rz]_{\g}-[Rx,Ry,Rz]_{\g}\Big), \quad \forall~x,y,z\in \g.
\end{equation}
Moreover, a $3$-Lie algebra $\g$ with a Reynolds operator $R$ is called a {\bf Reynolds $3$-Lie algebra}. We denote it by $(\g,[\cdot,\cdot,\cdot]_{\g},R).$
\end{defi}

\begin{defi}
Let $(\g_1,[\cdot,\cdot,\cdot]_{\g_1},R_1)$ and $(\g_2,[\cdot,\cdot,\cdot]_{\g_2},R_2)$ be Reynolds $3$-Lie algebras. A linear map $\phi:\g_1\rightarrow \g_2$ is called a {\bf homomorphism} of Reynolds $3$-Lie algebras if $\phi$ is a $3$-Lie algebra homomorphism and $\phi\circ R_1=R_2\circ \phi$.
\end{defi}

\begin{thm}\label{Reynolds-3-Lie algebra}
Let $(\g,[\cdot,\cdot,\cdot]_{\g},R)$ be a Reynolds $3$-Lie algebra. Define a multiplication $[\cdot,\cdot,\cdot]_{R}$ on $\g$ by
\begin{equation}
\label{induce-3-Lie}[x,y,z]_{R}=[Rx,Ry,z]_{\g}+[x,Ry,Rz]_{\g}+[Rx,y,Rz]_{\g}-[Rx,Ry,Rz]_{\g},\quad \forall x,y,z\in \g.
\end{equation}
Then
\begin{itemize}
\item[{\rm (a)}] $[Rx,Ry,Rz]_{\g}=R([x,y,z]_{R})$;
\item[{\rm (b)}] $(\g,[\cdot,\cdot,\cdot]_{R})$ is a $3$-Lie algebra;
\item[{\rm (c)}] $(\g,[\cdot,\cdot,\cdot]_{R},R)$ is a Reynolds $3$-Lie algebra;
\item[{\rm (d)}] $R$ is a Reynolds $3$-Lie algebra homomorphism from $(\g,[\cdot,\cdot,\cdot]_{R},R)$ to $(\g,[\cdot,\cdot,\cdot]_{\g},R)$.
\end{itemize}
\end{thm}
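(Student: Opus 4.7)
The plan is to recognize a Reynolds operator as a particular $\Phi$-twisted Rota-Baxter operator and then invoke Corollary \ref{Induce-3-Lie}. Set $V = \g$ with the adjoint representation $(\g;\ad)$, and define $\Phi \colon \wedge^3 \g \to \g$ by $\Phi(x,y,z) = -[x,y,z]_\g$. The first order of business is to check that $\Phi$ satisfies the 2-cocycle equation \eqref{2-cocycle}. After substituting $\Phi = -[\cdot,\cdot,\cdot]_\g$ and $\rho = \ad$, the first four terms become, up to sign, the terms of the fundamental identity \eqref{eq:jacobi1} and hence cancel. The remaining four terms, after using the cyclic invariance of the (totally skew-symmetric) 3-bracket to rewrite $[[x_1,x_2,x_3]_\g,x_4,x_5]_\g = [x_4,x_5,[x_1,x_2,x_3]_\g]_\g$ and so on, likewise vanish by a single application of the fundamental identity.

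With $\Phi$ in hand, a direct comparison shows that the Reynolds identity \eqref{Reynolds-operator} is precisely the $\Phi$-twisted Rota-Baxter identity \eqref{twisted Rota-Baxter operator} for $T = R$ with $(\g;\ad)$ and this $\Phi$: one uses the cyclic rearrangements $[Rv,Rw,u]_\g = [u,Rv,Rw]_\g$ and $[Rw,Ru,v]_\g = [Ru,v,Rw]_\g$ to match the summands. Thus $R$ is a $\Phi$-twisted Rota-Baxter operator, and Corollary \ref{Induce-3-Lie} supplies (b): the induced bracket \eqref{3-Lie-on-v} coincides with $[\cdot,\cdot,\cdot]_R$ of \eqref{induce-3-Lie}, which is therefore a 3-Lie bracket. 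The same corollary asserts that $R$ is a 3-Lie homomorphism from $(\g,[\cdot,\cdot,\cdot]_R)$ to $(\g,[\cdot,\cdot,\cdot]_\g)$, i.e.\ $R[x,y,z]_R = [Rx,Ry,Rz]_\g$, which is exactly (a).

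For (c), I apply (a) termwise to expand $R\bigl([Rx,Ry,z]_R + [x,Ry,Rz]_R + [Rx,y,Rz]_R - [Rx,Ry,Rz]_R\bigr)$ as a sum of four $[\cdot,\cdot,\cdot]_\g$-brackets in $R$-iterated arguments; the direct expansion of $[Rx,Ry,Rz]_R$ via \eqref{induce-3-Lie}, with the inner $R$ applied to $Rx,Ry,Rz$ respectively, produces exactly the same four terms, giving the Reynolds identity for $[\cdot,\cdot,\cdot]_R$. Part (d) is then immediate: (a) is the 3-Lie homomorphism property, and the compatibility $R \circ R = R \circ R$ with the same $R$ on both sides is tautological. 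The only genuine calculation in the whole argument is the 2-cocycle check in the first paragraph, and even that reduces to two clean invocations of the fundamental identity.
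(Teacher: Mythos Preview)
Your proof is correct, but it takes a different route from the paper's for part~(b). The paper proves (b) by a direct expansion of the fundamental identity for $[\cdot,\cdot,\cdot]_R$: it writes out all the terms of $[x_1,x_2,[x_3,x_4,x_5]_R]_R - \cdots$ in the original bracket $[\cdot,\cdot,\cdot]_\g$ and checks that they cancel via repeated applications of \eqref{eq:jacobi1}. Parts (a), (c), (d) in the paper are handled essentially as you do them.

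Your approach is more conceptual: you observe that $R$ is a $\Phi$-twisted Rota-Baxter operator for $\Phi=-[\cdot,\cdot,\cdot]_\g$ on the adjoint representation (this is exactly the content of Proposition~\ref{twisted-Reynolds}, which the paper states \emph{after} the present theorem), and then you import (a) and (b) from Corollary~\ref{Induce-3-Lie}. There is no circularity, since Corollary~\ref{Induce-3-Lie} is proved in Section~\ref{sec:L} and Proposition~\ref{twisted-Reynolds} does not rely on the present theorem. What your route buys is economy: the long bracket expansion is replaced by the single 2-cocycle verification (itself two uses of the fundamental identity) plus a one-line matching of \eqref{Reynolds-operator} with \eqref{twisted Rota-Baxter operator}. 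What the paper's route buys is self-containment at this point in the text---it does not need to pull in the twisted Rota-Baxter framework, and it keeps the logical order Theorem~\ref{Reynolds-3-Lie algebra} $\Rightarrow$ Proposition~\ref{twisted-Reynolds} rather than the reverse.
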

\begin{proof}
\item[{\rm (a)}.] It follows directly from  \eqref{Reynolds-operator}.
\item[{\rm (b)}.] Obviously, $[\cdot,\cdot,\cdot]_{R}$ is skew-symmetric.
For $x_1,x_2,x_3,x_4,x_5\in \g,$ by \eqref{eq:jacobi1} and \eqref{induce-3-Lie}, we have
\begin{eqnarray*}
&&[x_1,x_2,[x_3,x_4,x_5]_{R}]_{R}-[[x_1,x_2,x_3]_{R},x_4,x_5]_{R}-[x_3,[x_1,x_2,x_4]_{R},x_5]_{R}-[x_3,x_4,[x_1,x_2,x_5]_{R}]_{R}\\
&=&[Rx_1,Rx_2,[Rx_3,Rx_4,x_5]_{\g}]_{\g}+[Rx_1,Rx_2,[x_3,Rx_4,Rx_5]_{\g}]_{\g}+[Rx_1,Rx_2,[Rx_3,x_4,x_5]_{\g}]_{\g}\\
&&+[x_1,Rx_2,[Rx_3,Rx_4,Rx_5]_{\g}]_{\g}+[Rx_1,x_2,[Rx_3,Rx_4,Rx_5]_{\g}]_{\g}\\
&&-[[Rx_1,Rx_2,x_3]_{\g},Rx_4,Rx_5]_{\g}-[[Rx_1,x_2,Rx_3]_{\g},Rx_4,Rx_5]_{\g}-[[x_1,Rx_2,Rx_3]_{\g},Rx_4,Rx_5]_{\g}\\
&&-[[Rx_1,Rx_2,Rx_3]_{\g},Rx_4,x_5]_{\g}-[[Rx_1,Rx_2,Rx_3]_{\g},x_4,Rx_5]_{\g}\\
&&-[Rx_3,[Rx_1,Rx_2,x_4]_{\g},Rx_5]_{\g}-[Rx_3,[x_1,Rx_2,Rx_4]_{\g},Rx_5]_{\g}-[Rx_3,[Rx_1,x_2,Rx_4]_{\g},Rx_5]_{\g}\\
&&-[Rx_3,[Rx_1,Rx_2,Rx_4]_{\g},x_5]_{\g}-[x_3,[Rx_1,Rx_2,Rx_4]_{\g},Rx_5]_{\g}\\
&&-[Rx_3,Rx_4,[Rx_1,Rx_2,x_5]_{\g}]_{\g}-[Rx_3,Rx_4,[Rx_1,x_2,Rx_5]_{\g}]_{\g}-[Rx_3,Rx_4,[x_1,Rx_2,Rx_5]_{\g}]_{\g}\\
&&-[Rx_3,x_4,[Rx_1,Rx_2,Rx_5]_{\g}]_{\g}-[x_3,Rx_4,[Rx_1,Rx_2,Rx_5]_{\g}]_{\g}\\
&=&0.
\end{eqnarray*}
Thus, $(\g,[\cdot,\cdot,\cdot]_{R})$ is a $3$-Lie algebra.
\item[{\rm (c)}.] For $x,y,z\in \g,$ by \eqref{induce-3-Lie} and Item {\rm (a)}, we have
\begin{eqnarray*}
[Rx,Ry,Rz]_{R}&=&[R^2x,R^2y,Rz]_{\g}+[Rx,R^2y,R^2z]_{\g}+[R^2x,Ry,R^2z]_{\g}-[R^2x,R^2y,R^2z]_{\g}\\
&=&R([Rx,Ry,z]_{R}+[x,Ry,Rz]_{R}+[Rx,y,Rz]_{R}-[Rx,Ry,Rz]_{R}),
\end{eqnarray*}
which implies that $R$ is a Reynolds operator on the $3$-Lie algebra $(\g,[\cdot,\cdot,\cdot]_{R}).$
\item[{\rm (d)}.] By Item {\rm (a)}, $R$ is a $3$-Lie algebra homomorphism. Moreover, $R$ commutes with itself. Therefore,
$R$ is a Reynolds $3$-Lie algebra homomorphism from $(\g,[\cdot,\cdot,\cdot]_{R},R)$ to $(\g,[\cdot,\cdot,\cdot]_{\g},R).$
\end{proof}

The following results give the relation between Reynolds operators and twisted Rota-Baxter operators on 3-Lie algebras.

\begin{pro}\label{twisted-Reynolds}
Let $R$ be a Reynolds operator on a $3$-Lie algebra $(\g,[\cdot,\cdot,\cdot]_{\g})$. Then $R$ is a $\Phi$-twisted Rota-Baxter operator on $\g$ with respect to the adjoint representation $(\g;\ad)$, where $\Phi\in \Hom(\wedge^3\g,\g)$ is defined by
$$\Phi(x,y,z)=-[x,y,z]_{\g},\quad \forall x,y,z\in \g.$$
\end{pro}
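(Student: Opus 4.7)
I plan to verify the two assertions of the proposition separately: first, that $\Phi(x,y,z) = -[x,y,z]_\g$ is a $2$-cocycle with coefficients in the adjoint representation $(\g;\ad)$, and second, that the Reynolds condition \eqref{Reynolds-operator} is literally the $\Phi$-twisted Rota-Baxter condition \eqref{twisted Rota-Baxter operator} for this choice of $T=R$, $\rho=\ad$ and $\Phi$.

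For the $2$-cocycle verification, I would substitute $\Phi=-[\cdot,\cdot,\cdot]_\g$ and $\rho=\ad$ into \eqref{2-cocycle}. The four $\Phi$-terms without $\ad$ prefix combine, after sign-flipping, into
$$-[x_1,x_2,[x_3,x_4,x_5]_\g]_\g+[[x_1,x_2,x_3]_\g,x_4,x_5]_\g+[x_3,[x_1,x_2,x_4]_\g,x_5]_\g+[x_3,x_4,[x_1,x_2,x_5]_\g]_\g,$$
which vanishes immediately by the Fundamental Identity \eqref{eq:jacobi1}. For the remaining four $\ad\circ\Phi$ terms, I would use the cyclic invariance $[a,b,c]_\g=[b,c,a]_\g$ of the totally skew-symmetric $3$-bracket to rewrite $[[x_1,x_2,x_3]_\g,x_4,x_5]_\g=[x_4,x_5,[x_1,x_2,x_3]_\g]_\g$ and similarly for the other cyclic expressions; the resulting sum is again an instance of the Fundamental Identity and hence vanishes.

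For the twisted Rota-Baxter identity, I would simply plug $T=R$, $\rho(a,b)c=[a,b,c]_\g$, and $\Phi(a,b,c)=-[a,b,c]_\g$ into \eqref{twisted Rota-Baxter operator}. The right-hand side becomes
$$R\bigl([Rx,Ry,z]_\g+[Ry,Rz,x]_\g+[Rz,Rx,y]_\g-[Rx,Ry,Rz]_\g\bigr).$$
By cyclic invariance of the $3$-bracket again, $[Ry,Rz,x]_\g=[x,Ry,Rz]_\g$ and $[Rz,Rx,y]_\g=[Rx,y,Rz]_\g$, so this matches exactly the right-hand side of \eqref{Reynolds-operator}, and the identity follows.

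The entire argument is elementary; no step poses a genuine obstacle, and the only inputs used are the Fundamental Identity and the total skew-symmetry of the $3$-bracket.
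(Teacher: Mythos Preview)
Your proposal is correct and follows essentially the same approach as the paper's proof: verify that $-[\cdot,\cdot,\cdot]_\g$ is a $2$-cocycle for the adjoint representation (the paper simply asserts this by referring to \eqref{2-cocycle}, while you spell out the two applications of the Fundamental Identity), and then observe that the Reynolds identity \eqref{Reynolds-operator} coincides with the $\Phi$-twisted Rota-Baxter identity \eqref{twisted Rota-Baxter operator} for this $\Phi$. The paper's argument is a terse two sentences; your version is the same proof with the details filled in.
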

\begin{proof}
Let $(\g,[\cdot,\cdot,\cdot]_{\g})$ be a $3$-Lie algebra. By \eqref{2-cocycle}, the $3$-Lie bracket $[\cdot,\cdot,\cdot]_{\g}$ is a $2$-cocycle with coefficients in the adjoint representation $(\g;\ad)$, which implies that $R$ is a $\Phi$-twisted Rota-Baxter operator on $\g$ with respect to the adjoint representation.
\end{proof}

As Reynolds operators on $3$-Lie algebras are particular $\Phi$-twisted Rota-Baxter operators, the following conclusion is obvious.

\begin{cor}\label{Reynolds-3-NS-algebra}
Let $R:\g\rightarrow\g$ be a Reynolds operator on a $3$-Lie algebra $(\g,[\cdot,\cdot,\cdot]_{\g})$. Then
\begin{eqnarray*}
\{x,y,z\}=[Rx,Ry,z]_{\g}\quad\mbox{and} \quad[x,y,z]=-[Rx,Ry,Rz]_{\g},\quad \forall x,y,z\in \g,
\end{eqnarray*}
 defines an NS-$3$-Lie algebra structure on $\g.$
\end{cor}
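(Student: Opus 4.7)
The plan is to simply combine the two preceding results: Proposition \ref{twisted-Reynolds}, which realizes a Reynolds operator as a $\Phi$-twisted Rota-Baxter operator with respect to the adjoint representation, and Theorem \ref{construct-3-NS-Lie algebra}, which produces an NS-$3$-Lie algebra from any $\Phi$-twisted Rota-Baxter operator. No further calculation is needed beyond matching the formulas.

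First, by Proposition \ref{twisted-Reynolds}, the Reynolds operator $R:\g\rightarrow\g$ is a $\Phi$-twisted Rota-Baxter operator on $(\g,[\cdot,\cdot,\cdot]_{\g})$ with respect to the adjoint representation $(\g;\ad)$, where the $2$-cocycle is $\Phi(x,y,z)=-[x,y,z]_{\g}$ for all $x,y,z\in\g$. Thus we are in the setting of Theorem \ref{construct-3-NS-Lie algebra} with $V=\g$, $T=R$, and $\rho=\ad$.

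Next, applying Theorem \ref{construct-3-NS-Lie algebra} with these data, the induced NS-$3$-Lie algebra structure on $V=\g$ is given by
\begin{eqnarray*}
\{x,y,z\}&=&\rho(Tx,Ty)z=\ad(Rx,Ry)z=[Rx,Ry,z]_{\g},\\
\left[x,y,z\right]&=&\Phi(Tx,Ty,Tz)=-[Rx,Ry,Rz]_{\g},
\end{eqnarray*}
for all $x,y,z\in\g$, which matches the formulas in the statement.

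There is no real obstacle here: the only point to verify is that the substitution $V\mapsto\g$, $T\mapsto R$, $\rho\mapsto\ad$, $\Phi(x,y,z)\mapsto-[x,y,z]_{\g}$ in the defining equations \eqref{twisted-Ns} of Theorem \ref{construct-3-NS-Lie algebra} produces exactly the brackets claimed; this is immediate from the definitions. Therefore the conclusion follows directly.
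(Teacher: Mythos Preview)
Your proof is correct and matches the paper's approach exactly: the paper simply remarks that since Reynolds operators are particular $\Phi$-twisted Rota-Baxter operators (Proposition \ref{twisted-Reynolds}), the conclusion is obvious from Theorem \ref{construct-3-NS-Lie algebra}. You have written out precisely the substitution the paper leaves implicit.
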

Next, we illustrate the relationship between Reynolds operators and derivations on a $3$-Lie algebra.
\begin{pro}
Let $R:\g\rightarrow \g$ be a Reynolds operator on a $3$-Lie algebra $(\g,[\cdot,\cdot,\cdot]_{\g})$. If $R$ is invertible, then $(R^{-1}-\frac{1}{2}\Id):\g\rightarrow\g$ is a derivation on the $3$-Lie algebra $\g$, where $\Id$ is the identity operator.
\end{pro}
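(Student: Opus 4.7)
The plan is to exploit the invertibility of $R$ to rewrite the Reynolds identity in a form involving $R^{-1}$, and then observe that the derivation condition for $R^{-1} - \tfrac{1}{2}\Id$ falls out by a direct rearrangement.

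First I would substitute $x = R^{-1}a$, $y = R^{-1}b$, $z = R^{-1}c$ into the defining relation \eqref{Reynolds-operator}. Since $R$ is invertible, the left-hand side becomes simply $[a,b,c]_\g$, and applying $R^{-1}$ to both sides yields the cleaner identity
\begin{equation*}
R^{-1}[a,b,c]_\g = [R^{-1}a,b,c]_\g + [a,R^{-1}b,c]_\g + [a,b,R^{-1}c]_\g - [a,b,c]_\g,
\end{equation*}
which I would record as a lemma-style equation. Equivalently,
\begin{equation*}
[R^{-1}a,b,c]_\g + [a,R^{-1}b,c]_\g + [a,b,R^{-1}c]_\g = R^{-1}[a,b,c]_\g + [a,b,c]_\g.
\end{equation*}

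Next I would compute the right-hand side of the derivation condition \eqref{eq:der} for $D := R^{-1} - \tfrac{1}{2}\Id$. Expanding linearly gives
\begin{equation*}
[Da,b,c]_\g + [a,Db,c]_\g + [a,b,Dc]_\g = \bigl([R^{-1}a,b,c]_\g + [a,R^{-1}b,c]_\g + [a,b,R^{-1}c]_\g\bigr) - \tfrac{3}{2}[a,b,c]_\g,
\end{equation*}
and substituting the displayed identity converts this into $R^{-1}[a,b,c]_\g - \tfrac{1}{2}[a,b,c]_\g = D[a,b,c]_\g$. This is exactly the derivation axiom.

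There is essentially no obstacle here beyond a careful substitution and the observation that invertibility of $R$ allows one to freely translate between $R$ and $R^{-1}$; the shift by $-\tfrac{1}{2}\Id$ is forced by the coefficient bookkeeping ($1 - \tfrac{3}{2} = -\tfrac{1}{2}$) that matches $R^{-1}$ against $D$ on the left side. I would conclude with a one-line remark that the same calculation also shows the converse is not needed here, though it would hold: the formula $D = R^{-1} - \tfrac{1}{2}\Id$ is a genuine derivation of the original bracket $[\cdot,\cdot,\cdot]_\g$, not of the deformed bracket $[\cdot,\cdot,\cdot]_R$ from Theorem~\ref{Reynolds-3-Lie algebra}.
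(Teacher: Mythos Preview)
Your proof is correct and follows essentially the same approach as the paper: both derive the key identity $R^{-1}[a,b,c]_\g = [R^{-1}a,b,c]_\g + [a,R^{-1}b,c]_\g + [a,b,R^{-1}c]_\g - [a,b,c]_\g$ from the Reynolds condition via the substitution $x=R^{-1}a$, $y=R^{-1}b$, $z=R^{-1}c$, and then shift by $-\tfrac{1}{2}\Id$ to obtain the derivation law. The paper simply states that identity without spelling out the substitution, whereas you make it explicit; otherwise the arguments coincide.
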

\begin{proof}
Let $R:\g\rightarrow \g$ be an invertible Reynolds operator on $\g$.
By \eqref{Reynolds-operator}, we have
\begin{eqnarray*}
R^{-1}[x,y,z]_{\g}=[R^{-1}x,y,z]_{\g}+[x,R^{-1}y,z]_{\g}+[x,y,R^{-1}z]_{\g}-[x,y,z]_{\g},\quad \forall x,y,z\in \g,
\end{eqnarray*}
which implies that, for all $ x,y,z\in \g,$
\begin{eqnarray*}
(R^{-1}-\frac{1}{2}\Id)[x,y,z]_{\g}=[(R^{-1}-\frac{1}{2}\Id)x,y,z]_{\g}+[x,(R^{-1}-\frac{1}{2}\Id)y,z]_{\g}+[x,y,(R^{-1}-\frac{1}{2}\Id)z]_{\g}.
\end{eqnarray*}
This shows that $(R^{-1}-\frac{1}{2}\Id):\g\rightarrow\g$ is a derivation on the $3$-Lie algebra $\g$.
\end{proof}
Conversely, we can derive a Reynolds operator on a $3$-Lie algebra from a derivation.
\begin{pro}
Let $D:\g\rightarrow \g$ be a derivation on a $3$-Lie algebra $(\g,[\cdot,\cdot,\cdot]_{\g})$. If $(D+\frac{1}{2}\Id):\g\rightarrow\g$ is invertible, then $(D+\frac{1}{2}\Id)^{-1}$ is a Reynolds operator.
\end{pro}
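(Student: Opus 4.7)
The plan is to rewrite the desired Reynolds identity in an equivalent form that eliminates the inversion, namely by applying $D+\tfrac{1}{2}\Id$ to both sides. Setting $R=(D+\tfrac{1}{2}\Id)^{-1}$, the Reynolds condition \eqref{Reynolds-operator} is equivalent to
\begin{equation*}
(D+\tfrac{1}{2}\Id)[Rx,Ry,Rz]_{\g}=[Rx,Ry,z]_{\g}+[x,Ry,Rz]_{\g}+[Rx,y,Rz]_{\g}-[Rx,Ry,Rz]_{\g},
\end{equation*}
for all $x,y,z\in\g$. So the task reduces to verifying this polynomial identity in $R$ and $D$.

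First I would expand the left-hand side using the derivation property \eqref{eq:der} of $D$:
\begin{equation*}
D[Rx,Ry,Rz]_{\g}=[DRx,Ry,Rz]_{\g}+[Rx,DRy,Rz]_{\g}+[Rx,Ry,DRz]_{\g}.
\end{equation*}
The key observation is the simple algebraic identity $DR=(D+\tfrac{1}{2}\Id)R-\tfrac{1}{2}R=\Id-\tfrac{1}{2}R$, so that $DRx=x-\tfrac{1}{2}Rx$ and similarly for $y,z$. Substituting and using trilinearity gives
\begin{equation*}
D[Rx,Ry,Rz]_{\g}=[x,Ry,Rz]_{\g}+[Rx,y,Rz]_{\g}+[Rx,Ry,z]_{\g}-\tfrac{3}{2}[Rx,Ry,Rz]_{\g}.
\end{equation*}

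Adding $\tfrac{1}{2}[Rx,Ry,Rz]_{\g}$ to both sides yields exactly the desired equivalent form of \eqref{Reynolds-operator}, completing the proof. There is no real obstacle here; the only thing to recognize is the clean rewriting $DR=\Id-\tfrac{1}{2}R$, which makes the three $\tfrac{1}{2}$-contributions combine with the extra $\tfrac{1}{2}\Id$ term on the left to produce precisely the $-[Rx,Ry,Rz]_{\g}$ correction appearing on the right of \eqref{Reynolds-operator}.
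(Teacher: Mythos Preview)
Your proof is correct and follows essentially the same approach as the paper: both arguments apply $D+\tfrac{1}{2}\Id$ to the Reynolds identity and reduce it to a direct consequence of the derivation property. The only cosmetic difference is that you use the relation $DR=\Id-\tfrac{1}{2}R$ to expand $D[Rx,Ry,Rz]_{\g}$ directly, whereas the paper first establishes the identity $(D+\tfrac{1}{2}\Id)[u,v,w]_{\g}=[(D+\tfrac{1}{2}\Id)u,v,w]_{\g}+\cdots-[u,v,w]_{\g}$ for general $u,v,w$ and then substitutes $u=Rx$, $v=Ry$, $w=Rz$; these are the same computation in slightly different order.
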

\begin{proof}
Let $D:\g\rightarrow \g$ be a derivation on a $3$-Lie algebra $(\g,[\cdot,\cdot,\cdot]_{\g}).$
By \eqref{eq:der}, for all $u,v,w \in \g$, we have
\begin{eqnarray*}
(D+\frac{1}{2}\Id)[u,v,w]_{\g}=[(D+\frac{1}{2}\Id)u,v,w]_{\g}+[u,(D+\frac{1}{2}\Id)v,w]_{\g}+[u,v,(D+\frac{1}{2}\Id)w]_{\g}-[u,v,w]_{\g}.
\end{eqnarray*}
For convenience, we denote $P=D+\frac{1}{2}\Id.$
If $P$ is invertible, we put $Pu=x,Pv=y~\mbox{and}~Pw=z,$
we get
\begin{eqnarray*}
[P^{-1}x,P^{-1}y,P^{-1}z]_{\g}&=&P^{-1}\Big([x,P^{-1}y,P^{-1}z]_{\g}+[P^{-1}x,y,P^{-1}z]_{\g}\\
&&+[P^{-1}x,P^{-1}y,z]_{\g}-[P^{-1}x,P^{-1}y,P^{-1}z]_{\g}\Big),
\end{eqnarray*}
which implies that $P^{-1}$ is a Reynolds operator. The proof is finished.
\end{proof}

At the end of this section, we provide some examples of Reynolds operators on infinite dimensional $3$-Lie algebras.

In {\rm\cite{Baiwu}}, the authors construct a class of infinite dimensional $3$-Lie algebras by Laurent polynomials.
\begin{ex}{\rm
Let $A=F[t^{-1},t]$ be the Laurent polynomials over the field of complex numbers.
 Then $A$ is an infinite dimensional $3$-Lie algebra with the multiplication,
\begin{equation}
[t^{l},t^{m},t^{n}]_{A}=
\left|\begin{array}{ccc}
(-1)^l&(-1)^m&(-1)^n\\
1&1&1\\
l&m&n\\
\end{array}\right|t^{l+m+n-1},
\end{equation}
for all $l,m,n\in \mathbb{Z},~t^{l},t^{m},t^{n}\in A$.

Let $R:A\rightarrow A$ be a linear map defined by $R(t^{l})=\frac{1}{l}t^{l},~l\in \mathbb{Z}.$
For any $l,m,n\in \mathbb{Z},$ we obtain
\begin{eqnarray}\label{Reynold-1}
[R(t^{l}),R(t^{m}),R(t^{n})]_{A}=\frac{1}{lmn}[t^{l},t^{m},t^{n}]_{A}.
\end{eqnarray}
On the other hand,
\begin{eqnarray}\label{Reynold-2}
&&R([R(t^{l}),R(t^{m}),t^{n}]_{A}+[t^{l},R(t^{m}),R(t^{n})]_{A}+[R(t^{l}),t^{m},R(t^{n})]_{A}-[R(t^{l}),R(t^{m}),R(t^{n})]_{A})\\
\nonumber&=&R((\frac{1}{lm}+\frac{1}{mn}+\frac{1}{ln}-\frac{1}{lmn})[t^{l},t^{m},t^{n}]_{A})\\
\nonumber&=&(\frac{1}{lm}+\frac{1}{mn}+\frac{1}{ln}-\frac{1}{lmn})\frac{1}{l+m+n-1}[t^{l},t^{m},t^{n}]_{A}\\
\nonumber&=&\frac{1}{lmn}[t^{l},t^{m},t^{n}]_{A}.
\end{eqnarray}
It follows from \eqref{Reynold-1} and \eqref{Reynold-2} that $R$ is a Reynolds operator on $A.$

By Item {\rm (b)} and Item {\rm (c)} in  Theorem \ref{Reynolds-3-Lie algebra}, $(A,[\cdot,\cdot,\cdot]_{R})$ is a $3$-Lie algebra and $(A,[\cdot,\cdot,\cdot]_{R},R)$ is a Reynolds $3$-Lie algebra, where the $3$-Lie bracket $[\cdot,\cdot,\cdot]_{R}$ is defined by
\begin{eqnarray*}
 &&[t^{l},t^{m},t^{n}]_{R}:\\
&=&[R(t^{l}),R(t^{m}),t^{n}]_{A}+[t^{l},R(t^{m}),R(t^{n})]_{A}+[R(t^{l}),t^{m},R(t^{n})]_{A}-[R(t^{l}),R(t^{m}),R(t^{n})]_{A}\\
&=&\frac{l+m+n-1}{lmn}
\left|\begin{array}{ccc}
(-1)^l&(-1)^m&(-1)^n\\
1&1&1\\
l&m&n\\
\end{array}\right|t^{l+m+n-1}.
\end{eqnarray*}

According to Corollary \ref{Reynolds-3-NS-algebra}, the Reynolds operator induces an  NS-$3$-Lie algebra $(A,\{\cdot,\cdot,\cdot\},[\cdot,\cdot,\cdot])$, where
\begin{eqnarray*}
\{t^{l},t^{m},t^{n}\}&=&-\{t^{l},t^{m},t^{n}\}=\frac{1}{lm}[t^{l},t^{m},t^{n}]_{A},\\
~[t^{l},t^{m},t^{n}]&=&\frac{1}{lmn}[t^{l},t^{m},t^{n}]_{A}.
\end{eqnarray*}
}
\end{ex}

In {\rm\cite{S.Chakrabortty}}, the authors constructed an infinite-dimensional $3$-Lie algebra, called the $\omega_{\infty}$~$3$-Lie algebra by applying a double scaling limits on the generators of the $W_{\infty}$~algebra {\rm\cite{Pope}}.
\begin{ex}{\rm
Let $\{\omega^{a}_{m}|a,m\in \mathbb{Z}\}$ generate an infinite-dimensional vector space. Define
\begin{equation}
[\omega^{a}_m,\omega^{b}_n,\omega^{c}_p]=
\left|\begin{array}{ccc}
1&1&1\\
m&n&p\\
a&b&c\\
\end{array}\right|\omega^{a+b+c+1}_{m+n+p},\quad\forall~m,n,p,a,b,c\in \mathbb{Z}.
\end{equation}
Then $\{\omega^{a}_{m}|a,m\in \mathbb{Z}\}$ generate an infinite-dimensional $3$-Lie algebra which is called the $\omega_{\infty}$~$3$-Lie algebra. We denote the $3$-Lie algebra by
$\langle\omega^{a}_{m}\rangle.$

Let $R:\langle\omega^{a}_{m}\rangle\rightarrow \langle\omega^{a}_{m}\rangle$ be a linear map defined by
$$R(\omega^{a}_m)=\frac{1}{m+a+1}\omega^{a}_m,\quad m,a\in \mathbb{Z}.$$
We observe that, for any $m,n,p,a,b,c\in \mathbb{Z},$
\begin{eqnarray}\label{Reynold-3}
[R(\omega^{a}_m),R(\omega^{b}_n),R(\omega^{c}_p)]=\frac{1}{(m+a+1)(n+b+1)(p+c+1)}[\omega^{a}_m,\omega^{b}_n,\omega^{c}_p].
\end{eqnarray}
On the other hand,
\begin{eqnarray}\label{Reynold-4}
&&R([R(\omega^{a}_m),R(\omega^{b}_n),\omega^{c}_p]+[\omega^{a}_m,R(\omega^{b}_n),R(\omega^{c}_p)]+[R(\omega^{a}_m),\omega^{b}_n,R(\omega^{c}_p)]-[R(\omega^{a}_m),R(\omega^{b}_n),R(\omega^{c}_p)])\\
\nonumber&=&R((\frac{1}{(m+a+1)(n+b+1)}+\frac{1}{(n+b+1)(p+c+1)}+\frac{1}{(m+a+1)(p+c+1)}\\
\nonumber&&-\frac{1}{(m+a+1)(n+b+1)(p+c+1)})[\omega^{a}_m,\omega^{b}_n,\omega^{c}_p])\\
\nonumber&=&(\frac{1}{(m+a+1)(n+b+1)}+\frac{1}{(n+b+1)(p+c+1)}+\frac{1}{(m+a+1)(p+c+1)}\\
\nonumber&&-\frac{1}{(m+a+1)(n+b+1)(p+c+1)})\frac{1}{m+n+p+a+b+c+2}[\omega^{a}_m,\omega^{b}_n,\omega^{c}_p]\\
\nonumber&=&\frac{1}{(m+a+1)(n+b+1)(p+c+1)}[\omega^{a}_m,\omega^{b}_n,\omega^{c}_p].
\end{eqnarray}
By \eqref{Reynold-3} and \eqref{Reynold-4}, $R$ is a Reynolds operator on $\langle\omega^{a}_{m}\rangle.$

Similarly, $(\langle\omega^{a}_{m}\rangle,[\cdot,\cdot,\cdot]_{R})$ is a $3$-Lie algebra and $(\langle\omega^{a}_{m}\rangle,[\cdot,\cdot,\cdot]_{R},R)$ is a Reynolds $3$-Lie algebra, where the $3$-Lie bracket $[\cdot,\cdot,\cdot]_{R}$ is given by
\begin{eqnarray*}
[\omega^{a}_m,\omega^{b}_n,\omega^{c}_p]_{R}:&=&[R(\omega^{a}_m),R(\omega^{b}_n),\omega^{c}_p]+[\omega^{a}_m,R(\omega^{b}_n),R(\omega^{c}_p)]\\
&&+[R(\omega^{a}_m),\omega^{b}_n,R(\omega^{c}_p)]-[R(\omega^{a}_m),R(\omega^{b}_n),R(\omega^{c}_p)]\\
&=&\frac{m+n+p+a+b+c+2}{(m+a+1)(n+b+1)(p+c+1)}
\left|\begin{array}{ccc}
1&1&1\\
m&n&p\\
a&b&c\\
\end{array}\right|\omega^{a+b+c+1}_{m+n+p}.
\end{eqnarray*}
\emptycomment{
 And the Reynolds operator induces a  NS-$3$-Lie algebra $(\omega,\{\cdot,\cdot,\cdot\},[\cdot,\cdot,\cdot])$, where
\begin{eqnarray*}
\{\omega^{a}_m,\omega^{b}_n,\omega^{c}_p\}=[R\omega^{a}_m,R\omega^{b}_n,\omega^{c}_p]=
\frac{1}{(m+a+1)(n+b+1)}
\left|\begin{array}{ccc}
1&1&1\\
m&n&p\\
a&b&c\\
\end{array}\right|\omega^{a+b+c+1}_{m+n+p},
\end{eqnarray*}
and
\begin{eqnarray*}
[\omega^{a}_m,\omega^{b}_n,\omega^{c}_p]=-[R\omega^{a}_m,R\omega^{b}_n,R\omega^{c}_p]=
-\frac{1}{(m+a+1)(n+b+1)(p+c+1)}
\left|\begin{array}{ccc}
1&1&1\\
m&n&p\\
a&b&c\\
\end{array}\right|\omega^{a+b+c+1}_{m+n+p}.
\end{eqnarray*}}
}
\end{ex}


\begin{thebibliography}{a}


\bibitem{BGS-3-Bialgebras} C. Bai, L. Guo and Y. Sheng, Bialgebras, the classical Yang-Baxter equation and Manin triples for $3$-Lie algebras, \emph{Adv. Theor. Math. Phys.} {\bf 23} (2019), 27-74.

\bibitem{BaiRGuo} R. Bai, L. Guo, J. Li and Y. Wu, Rota-Baxter $3$-Lie algebras, \emph{J. Math. Phys.} {\bf 54} (2013), 064504, 14 pp.

\bibitem{Baiwu} R. Bai, Y. Wu, Constructions of $3$-Lie algebras, \emph{Linear Multilinear Algebra} {\bf 63} (2015), no. 11, 2171-2186.


\bibitem{CGM}
J. Carinena, J. Grabowski and G. Marmo, Quantum bi-Hamiltonian systems, \emph{ Internat. J. Modern Phys. A.} {\bf 15} (2000), no. 30, 4797-4810.

\bibitem{Casas} J. M. Casas, J. L. Loday and T. Pirashvili, Leibniz $n$-algebras, \emph{Forum Math.} {\bf 14} (2002), no. 2, 189-207.

\bibitem{S.Chakrabortty} S. Chakrabortty, A. Kumar and S. Jain, $\omega_{\infty}$~$3$-algebra, \emph{J. High Energy Phys.} {\bf 091} (2008), no. 9, 8 pp.


\bibitem{CK}
A. Connes and D. Kreimer, Renormalization in quantum field theory and the Riemann-Hilbert problem. I. The Hopf algebra structure of graphs and the main theorem, {\em  Comm. Math. Phys. } {\bf 210} (2000), 249-273.

\bibitem{Das}
A. Das, Deformations of associative Rota-Baxter operators, \emph{J. Algebra} {\bf560} (2020), 144-180.

\bibitem{Das-1} A. Das, Twisted Rota-Baxter operators, Reynolds operators on Lie algebras and NS-Lie algebras, \emph{arXiv:2009.09368}.

\bibitem{Das-2} A. Das, Cohomology and deformations of twisted Rota-Baxter operators and NS-algebras, \emph{arXiv:2010.01156}.

\bibitem{review}
J. A. de Azc\'arraga and J. M. Izquierdo, $n$-ary algebras: a review with applications, \emph{J. Phys. A} {\bf 43} (2010), 293001.

\bibitem{Do}
I.  Dorfman, \emph{Dirac structures and integrability of nonlinear evolution equations,} Wiley, Chichester, 1993.

\bibitem{Dzhu} A. S. Dzhumadil$'$daev,  Representations of vector product $n$-Lie algebras, \emph{Comm. Algebra} {\bf32} (2004),  3315-3326.

\bibitem{deformation}
J. Figueroa-O$'$Farrill, Deformations of 3-algebras, \emph{J. Math. Phys.} 50 (2009), no. 11, 113514, 27 pp.

\bibitem{Filippov}  V. T. Filippov, $n$-Lie algebras, \emph{Sibirsk. Mat. Zh.} {\bf 26} (1985), 126-140, 191.

\bibitem{Gub-AMS} L. Guo,  What is a Rota-Baxter algebra? \emph{ Notice Amer. Math. Soc} {\bf 56}  (2009), 1436-1437.

\bibitem{Gub} L. Guo, An introduction to Rota-Baxter algebra, \emph{International Press} (2012), xii+226 pp.




\bibitem{KAM} J. Kamp\'{e} de F\'{e}riet, Introduction to the statistical theory of turbulence, \emph{ Correlation and spectrum}. The Institute for Fluid Dynamics and Applied Mathematics, University of Maryland, College Park, Md., (1951), iv+162 pp.

\bibitem{KA} Sh. M. Kasymov, On a theory of $n$-Lie algebras,  \emph{Algebra i Logika} {\bf 26} (1987), 277-297.

\bibitem{Ku}
B. A. Kupershmidt, What a classical $r$-matrix really is. \emph{ J. Nonlinear Math. Phys.} {\bf 6} (1999), 448-488.

\bibitem{LG} P. Lei and L. Guo, Nijenhuis algebras, NS algebras and N-dendriform algebras, \emph{Front. Math.} {\bf 7} (2012), 827-846.

\bibitem{Leroux} P. Leroux, Construction of Nijenhuis operators and dendriform trialgebras, \emph{Int. J. Math. Math. Sci.} (2004), no. 49-52, 2595-2615.

\bibitem{Liu-Jie-Feng} J. Liu, Y. Sheng, Y. Zhou and C. Bai, Nijenhuis operators on $n$-Lie algebras, \emph{Commun. Theor. Phys.} {\bf 65} (2016), no. 6, 659-670.

\bibitem{Makhlouf} A. Makhlouf, On deformations of $n$-Lie algebras. Non-associative and non-commutative algebra and operator theory, \emph{Springer Proc. Math. Stat.} {\bf 160}, Springer, Cham, (2016), 55-81.



\bibitem{PBG} J. Pei, C. Bai and L. Guo, Splitting of operads and Rota-Baxter operators on operads, {\em Appl. Cate. Stru.} {\bf 25} (2017), 505-538.

\bibitem{Pope} C. N. Pope, L. J. Romans and X. Shen, The complete structure of $W_{\infty}$, \emph{Phys. Lett. B} {\bf 236} (1990), no. 2, 173-178.

\bibitem{Re}  O. Reynolds, On the dynamical theory of incompressible viscous fluids and the determination of the criterion, \emph{ Phil. Trans. Roy. Soc. A} {\bf 136} (1895), 123-164; reprinted in \emph{ Proc. Roy. Soc. London Ser. A} {\bf 451} (1995), no. 1941, 5-47.


\bibitem{STS} M. A. Semonov-Tian-Shansky, What is a classical R-matrix? \emph{Funct. Anal. Appl.} {\bf 17} (1983), 259-272.

\bibitem{Sheng-Tang} Y. Sheng and R. Tang, Symplectic, product and complex structures on 3-Lie algebras, \emph{J. Algebra} {\bf 508} (2018), 256-300.

\bibitem{Takhtajan1} L. A. Takhtajan, Higher order analog of Chevalley-Eilenberg complex and deformation theory of $n$-gebras, \emph{St. Petersburg Math. J.} {\bf 6} (1995), 429-438.

\bibitem{TBGS} R. Tang, C. Bai, L. Guo and Y. Sheng, Deformations and their controlling cohomologies of $\mathcal O$-operators, \emph{ Comm. Math. Phys.} {\bf 368} (2019), no. 2, 665-700.

\bibitem{THS} R. Tang, S. Hou and Y. Sheng, Lie $3$-algebras and deformations of relative Rota-Baxter operators on $3$-Lie algebras, \emph{J. Algebra} {\bf 567} (2021), 37-62.

\bibitem{Uch}
K. Uchino, Twisting on associative algebras and Rota-Baxter type operators. {\em J. Noncommut. Geom.} {\bf 4} (2010), 349-379.


\bibitem{gao-guo} T. Zhang, X. Gao and L. Guo, Reynolds algebras and their free objects from bracketed words and rooted trees, \emph{arXiv:1911.08970}.

\bibitem{Zhangtao}
T. Zhang, Deformations and Extensions of 3-Lie algebras,  arXiv:1401.4656.























\end{thebibliography}
 \end{document}